\newcommand{\rf}[1]{(\ref{#1})}
\newtheorem{theorem}{Theorem}[section]
\newtheorem*{theorem*}{Theorem}
\newtheorem{proposition}[theorem]{Proposition}
\newtheorem*{proposition*}{Proposition}
\newtheorem{statement}[theorem]{Statement}
\newtheorem{lemma}[theorem]{Lemma}
\newtheorem{corollary}[theorem]{Corollary}
\newtheorem*{corollary*}{Corollary}
\theoremstyle{definition}
\newtheorem{example}[theorem]{Example}
\newtheorem{definition}[theorem]{Definition}
\theoremstyle{remark}
\newtheorem{remark}[theorem]{Remark}
\newcommand{\ldot}{{\:\raisebox{1.5pt}{\selectfont\text{\circle*{1.5}}}}}
\newcommand{\udot}{{\:\raisebox{4pt}{\selectfont\text{\circle*{1.5}}}}}
\def\kk{\Bbbk}
\let\le\leqslant
\let\ge\geqslant
\let\leq\leqslant
\let\geq\geqslant
\let\cong\simeq
\newcommand{\Z}{\mathbb Z}
\newcommand{\C}{\mathbb C}
\def\d{\partial}
\def\CP1{\mathbb{C}\mathrm{P}^1}
\def\oM{\overline{\mathcal{M}}}
\def\mm{\mathsf{m}}
\def\mmg{\mathsf{\bar{m}}}
\def\BV{\mathsf{BV}}
\def\Hycomm{\mathsf{Hycomm}}
\def\Gerst{\mathsf{Gerst}}
\def\Grav{\mathsf{Grav}}
\def\P{\EuScript{P}}
\def\Q{\EuScript{Q}}
\def\End{{\mathrm{End}}}
\def\Bar{\EuScript{B}}
\def\dual{{\vee}}
\def\deg{{\mathrm{deg}}}
\def\Top{{\mathcal{T}op}}
\def\tt{\text{-}}
\title{Hypercommutative operad as a homotopy quotient of BV}
\author{A.~Khoroshkin}
\address{A.~Khoroshkin:\newline
Simons Center for Geometry and Physics,
State University of New York\newline
Stony Brook, NY 11794-3636,
U.S.A.
\newline
and 
ITEP, Bolshaya Cheremushkinskaya 25, 117259, Moscow, Russia
}
\email{anton.khoroshkin@scgp.stonybrook.edu}
\author{N.~Markarian}
\address{N.~Markarian:\newline
Department of Mathematics,
National Research University Higher
School of Economics, \newline
Ul. Vavilova 7, Moscow 117312, Russia}
\email{nikita.markarian@gmail.com}
\author{S. Shadrin}
\address{S.~Shadrin:\newline
Korteweg-de Vries Institute for Mathematics,
University of Amsterdam, \newline
P.~O.~Box 94248, 1090 GE Amsterdam, 
The Netherlands}
\email{s.shadrin@uva.nl}
\thanks{
The first author's research was partially supported by the grants
 NSh-3349.2012.2,
RFBR-10-01-00836,
RFBR-CNRS-10-01-93111, RFBR-CNRS-10-01-93113,
and the grant
by Ministry of Education and Science of the Russian Federation under
contract 14.740.11.081.
The second author's research was partially supported by
AG Laboratory GU-HSE, RF
government grant, ag.~11.G34.31.0023 and the National Research University
Higher School of Economics' Academic Fund Program in 2012-2013,
research grant No.~11-01-0145, RFBR--12-01-00944.
The third author's research was supported by a Vidi grant of the Netherlands Organization for Scientific Research.
}
\begin{document}

\begin{abstract}
We give an explicit formula for a quasi-isomorphism between
the operads $\Hycomm$ (the homology of the moduli space of stable genus $0$ curves)
and $\BV/\Delta$ (the homotopy quotient of Batalin-Vilkovisky operad by the $\BV$-operator).
In other words we derive an equivalence of $\Hycomm$-algebras and 
$\BV$-algebras enhanced with a homotopy that trivializes the $\BV$-operator.

These formulas are given in terms of the Givental graphs, and are proved in two different ways. 
One proof uses the Givental group action, and the other proof goes through a chain of explicit formulas on resolutions of $\Hycomm$ and $\BV$. The second approach gives, in particular, a homological explanation of the Givental group action on $\Hycomm$-algebras.
\end{abstract}

\maketitle

%\tableofcontents

\setcounter{section}{-1}

\section{Introduction}
The main purpose of this paper is to describe a natural equivalence 
between the category of differential graded Batalin-Vilkovisky algebras enhanced with the trivialization of BV-operator 
and the category of formal Frobenius manifolds without a pairing (also known under the name of hypercommutative algebras).
The problem we are discussing has an explicit topological origin.
I.~e., we are looking for an  equivalence of the operad of moduli spaces of stable curves
and a homotopy quotient of the framed discs operad by the circle action.
Having in mind that both topological operads under consideration are known to be formal
we restrict ourselves to the corresponding relationship of the homology operads.
We suggest a pure algebraic solution of the problem accompanied with an exact formula for the desired quasi-isomorphism.

Let us first briefly recall the definitions of two categories under consideration using the language of operads.
Consider the moduli spaces of stable genus $0$ curves $\oM_{0,n+1}$, $n=2,3,\dots$. 
A stable genus $0$ curve is a nodal curve of arithmetic genus $0$ with $(n+1)$ pairwise 
distinct marked points in its smooth part, and it has at least three special points 
(nodes or marked points) on each of the irreducible components. 
The points are labeled by the numbers $0,1,\dots,n$. 
There is a natural stratification by the topological types of nodal curves. 
The strata of codimension one can be realized as the images of the gluing morphism 
$\rho=\rho_i\colon\oM_{0,n_1+1}\times\oM_{0,n_2+1}\to\oM_{0,n+1}$, $n=n_1+n_2-1$, $i=1,\dots,n_1$,
 where the new nodal curve is obtained by attaching the zero point of a curve in $\oM_{0,n_2+1}$ 
to the $i$-th point of a curve in $\oM_{0,n_1+1}$. These morphisms define on the spaces $\oM_{0,n+1}$, $n=2,3,\dots$, the structure of a topological operad. Therefore, the homology of the spaces $\oM_{0,n+1}$, $n=2,3,\dots$, are endowed with the structure of an algebraic operad. 
This operad is called \emph{the hypercommutative operad} and we denote it by $\Hycomm$. 
We recall an explicit description of the \emph{hypercommutative algebra} in 
Section~\ref{sec::open_close}. 
We refer to~\cite{Man} 
for details and to~\cite{Keel}
for the description of the intersection theory on $\oM_{0,n+1}$.
(Note that Manin uses in~\cite{Man} the notation $\mathcal{C}om_\infty$ for the operad of hypercommutative algebras.)

Another important topological operad under consideration is the framed little discs operad.
The set of $n$-ary operations of this operad consists of configurations of the disjoint union of $n$ small discs 
inside the unit disc, each inner disc has a marked point on the boundary.
It is equivalent to mark a point on the boundary of the circle or to fix a rotation of the inner disc which gives an identification of the inner disc with a standard disc of the same radius.
The gluing of the outer boundary of the unit disc coming from configuration of $n_1$ small discs
with the boundary of the $i$'th inner disc of the configuration of $n_2$ small discs defines a configuration of $n_1+n_2+1$ small
pointed discs which prescribes the composition rules in the operad. 
The homology of this operad is known under the name of Batalin-Vilkovisky operad and has very simple description 
in terms of generators and relations.
Namely, a (differential graded) Batalin-Vilkovisky algebra is a graded commutative associative algebra with two operators, 
$d$ of degree $1$ and $\Delta$ of degree $-1$, such that $d^2$, 
$\Delta^2$, and $d\Delta+\Delta d$ are equal to zero, 
$d$ is a derivation and $\Delta$ 
is a differential operator of the second order with respect to the multiplication. 

These two algebraic structures, hypercommutative algebras and Batalin-Vilkovisky algebras are known to be closely related. 
The hypercommutative algebra structure is the most important ingredient of a formal Frobenius manifold structure.
A typical application of a relation between $\BV$-algebras and hypercommutative algebras is that under some conditions 
like Hodge property or some trivialization of the $\BV$-operator $\Delta$ we obtain a Frobenius manifold structure 
on the cohomology of a $\BV$-algebra; we refer to~\cite{BerCecOogVaf,BarKon,LosSha,Katzarkov_Pantev,DotKho,DruVal,DotShaVal,Dru} 
for different aspects and different examples of this kind of correspondence and relations between them. 
The topological origin of all these statements looks as follows.
The homotopy quotient of the framed little discs operad by rotations is weakly equivalent to the operad of moduli spaces
of stable genus $0$ curves. This statement was mentioned in~\cite{Mar} and written in details in~\cite{Dru}.
We are focused on the algebraic counterpart of this statement equipped with precise formulas.

In topology the homotopy quotient functor by the group $G$ is a functor from the category of
$G$-spaces to the category of spaces which is defined as a left adjoint functor to the trivial embedding:
any topological space admits a trivial action of the group $G$.
The algebra over the homotopy quotient by $G$ of a given operad $\P$
is an algebra over $\P$ where the action of $G$ is trivialized.
We will show the equivalence of this two definitions 
in the particular case $G=S^1$ and $\P=\BV$.

In general, the condition on trivialization of the $\BV$-operator $\Delta$ that one has to use can be formulated 
in several different ways. First, we require that $\Delta$ is homotopically trivial, 
that is, the full homotopy transfer of $\Delta$ on the cohomology of $d$ is equal to zero. 
Equivalently, we can say that the spectral sequence (if exists) for $(d,\Delta)$ converges on the first page.
(See~\cite{DotShaVal} for details of this approach.)
We use the different but similar approach.
Consider the bi-complex $V[[z]]$ with differential $d+z\Delta$, where $z$ is a formal parameter of homological degree $2$,
and consider a particular trivialization (homotopy) for the action of $\Delta$.
Namely, we choose a particular automorphism of the space $V[[z]]$ which gives a quasi-isomorphism of
complexes with respect to differentials $d$ and $d+z\Delta$.
The other possible way to say the same is that 
$d+z\Delta=\exp(-\phi(z))d\exp(\phi(z))$, where $z$ is a formal variable, 
and $\phi(z)=\sum_{i=1}^\infty \phi_iz^i$ is some series of operators.
 We consider these extra operators $\phi_i$, $i=1,2,\dots$, as a part of the algebraic structure we have, 
and a $\BV$ algebra equipped with this extra trivialization data is a representation of the homotopy quotient of the $\BV$ operad. 
We denote this model of the homotopy quotient by $\BV/\Delta$.  

The main result of this paper is an explicit formula for a quasi-isomorphism $\theta\colon\Hycomm\to\BV/\Delta$.
This result summarizes the relations between hypercommutative algebras and Batalin-Vilkovisky algebras mentioned above.
The equivalence of the homotopy categories of $\Hycomm$-algebras 
and homotopy quotient of $\BV$-algebras was given in~\cite{Dru} on the level of chains.

There are two ways to construct this map: 

First approach goes through a careful analysis of a system of relations between the operads $\Hycomm$, $\BV/\Delta$,
the operad of Gerstenhaber algebras and the gravity operad.
It deals with different precise relationships between homotopy quotients and equivariant (co)homology
first discovered by Getzler in~\cite{Getzler_grav,Get,Getzler_genus0}.
{\bf Theorem~\ref{thm::diag:Hycom->BV}} summarizes these relationships in main {\bf Diagram~\eqref{eq::diag::Hycom_BV} }
of quasi-iso relating $\BV/\Delta$ and $\Hycomm$. 
We go through Diagram~\eqref{eq::diag::Hycom_BV} specifying the generating cocycles in the cohomology at each step. 
As a result we get a formula for $\theta$ given in terms of summations over three-valent graphs.

Second approach is a generalization of the interpretation of the BCOV theory suggested in~\cite{Sha}. 
There is an action of the loop group of the general linear group on the representations of $\Hycomm$ in a given vector space. 
It was constructed by Givental and the action of its Lie algebra was studied by Y.-P.~Lee, see~\cite{Giv3,Lee1}. 
We generalize this group action to an action on the space of morphisms from $\Hycomm$ to an arbitrary operad. 
This way we can describe the map $\theta$ as an application of a particular Givental group element to a very simple morphism 
from $\Hycomm$ to $\BV/\Delta$, the one that preserves the commutative associative product and ignores all the rest.
In this case the final formula is given in terms of summations over graphs with arbitrary valencies of vertices.

We state that these two formulas for $\theta$ coincide, however, we prefer to omit the direct proof of this statement
and use the uniqueness arguments in order to explain the coincidence.
The Givental-style formula is simpler for applications and contains already all cancellations, however,
the homological approach is of it's own interest. 
In particular, it allows to give additional point of view on the $\psi$-classes which we want to use elsewhere.
So far, we show how one can get the topological recursion relations using this homological interpretation.

Finally, our result on an explicit quasi-isomorphism formula allows to give a new interpretation for 
the Givental group action mentioned above. It appears that the action of the Givental group on morphisms 
of $\Hycomm$ corresponds to the ambiguity of a particular choice of a trivialization for $\Delta$ in $\BV/\Delta$.

\subsection{Outline of the paper}
We repeat once again that in spite of topological motivation all proofs and all expositions are purely algebraic.
All operads involved and algebras over them are defined in pure algebraic terms of generators and relations.

In Section~\ref{sec:explicitformula} we formulate our main result on an explicit formula for the quasi-iso $\theta\colon\Hycomm\to\BV/\Delta$. 
Section~\ref{sec::circle} deals with the circle action. 
Namely, the categorical definition of the homotopy quotient by $\Delta$ is given in~\ref{sec::homotopy_quotients_def} and 
the algebraic counterpart of Chern classes is presented in~\ref{sec::chern}.

In Section~\ref{sec::operad::definitions} we introduce notations and definitions 
for all operads involved in the main chain of quasi-isomorphisms between $\Hycomm$ and $\BV/\Delta$ 
(Diagram~\eqref{eq::diag::Hycom_BV}). 
This part is quite technical and is needed mainly to fix the notation.

Section~\ref{sec::main_diagram} contains the main Diagram~\eqref{eq::diag::Hycom_BV} of quasi-iso connecting $\Hycomm$ and $\BV/\Delta$.
We play around it in order to get algebraic description of $\psi$-classes and a useful dg-model of the $\BV$-operad.
In Section~\ref{sec:diagrammatic} we go through all these quasi-isomorphisms specifying generating cocycles 
in the cohomology, 
and this way we obtain a direct map $\theta\colon\Hycomm\to\BV/\Delta$. 
In Section~\ref{sec:givental} we recall the Givental theory, apply it in order to get a formula for 
$\theta$ from Section~\ref{sec:explicitformula},
 and then use the existence of such a map in order to give a new interpretation for the Givental theory.

Those readers who are interested more in the results rather than in the proofs may skip technical 
Sections~\ref{sec::operad::definitions} and \ref{sec:diagrammatic}.

\subsection{Acknowledgment}
We are grateful to V.~Dotsenko, E.~Getzler, A.~Givental, G.~Felder, A.~Losev, and B.~Vallette 
for useful discussions on closely related topics.

%%%%%%%%%%%%%%%%%%%%%%%%
%%%%%%%%%%%%%%%%%%%%%%%%
%%%%%%%%%%%%%%%%%%%%%%%%

\section{An explicit formula}
\label{sec:explicitformula}
In this section we give an explicit formula for a map $\Hycomm\to \BV/\Delta$ that takes $\Hycomm$ isomorphically to the cohomology of $\BV/\Delta$.

\subsection{A presentation of $\BV/\Delta$}
\label{sec::bv/delta}
The definition of a homotopy quotient given below is more convenient for applications 
then the standard categorical definition.
We discuss the equivalence of these definitions in Section~\ref{sec::homotopy_quotients_def}.

The algebras over the homotopy quotient $\BV/\Delta$ 
 are in one-to-one correspondence with 
the $\BV$-algebras where $\Delta$ acts trivially on homology and moreover one chooses 
a particular trivialization for this action.
I.~e. the $\BV/\Delta$ algebra on a complex $(V^{\udot},d)$ consists of 
commutative multiplication, differential operator $\Delta: V^{\udot}\rightarrow V^{\udot}[-1]$ of order at most $2$ and 
an isomorphism of complexes
\begin{equation}
\label{eq::homotopy_quotients_def}
\Phi(z): (V^{\udot}[[z]],d + z\Delta) \rightarrow (V^{\udot}[[z]],d),
\end{equation}
where $z$ is a formal parameter of degree $2$ and $\Phi(z)$ is a formal power series in $z$.
I.~e. $\Phi(z) = \sum_{i\geq 0} \Phi_i z^i$. $\Phi_i$ should be linear endomorphisms of the vector space $V^{\udot}$
 of pure homological degree $-2i$ and $\Phi_0=Id_{V}$.

Our formulas below become simpler if we consider exponential coordinates for trivialization.
Namely, we represent $\Phi(z)$ as a series $\exp( \phi(z))$, $ \phi(z):=\sum_{i\geq 1} \phi_i z^i$ that is, 
\begin{equation*}
Id_{V} + \Phi_1 z + \Phi_2 z^2 +\ldots = \exp( \phi_1 z +\phi_2 z^2 +\ldots ).
\end{equation*}
This allows us to describe the operad $\BV/\Delta$ in the following way. 

In order to homotopically resolve the operation $\Delta$ in the operad $\BV$ 
we have to add a number of generators $\phi_i$, $i\geq 1$, 
$\deg \phi_i=-2i$, and define a differential $d$ 
that vanishes on all generators of $\BV$ operad and such that $\Delta$ 
itself becomes an exact cocycle, while the rest of the 
$\BV$-structure survives in the cohomology (and no new cohomology cycles appear).
We rewrite the formula $d\exp(\phi(z))=\exp(\phi(z)) (d+z\Delta)$ as 
\begin{equation}\label{eq:d-delta}
\Phi(z)^{-1}d\Phi(z) = \exp(-\phi(z))d\exp(\phi(z))=d+z\Delta
\end{equation} 
and use the expansion of the left hand side of the latter equation in order to define a differential that we denote by $\Delta\frac{\d}{\d\phi}$ on the generators $\phi_i$ 
as an expression for $[d,\phi_i]$. That is, the formulas
\begin{align}\label{eq:d-a-0}
\Delta & = [d,\phi_1], \\ \notag
0 & = [d,\phi_2]+\frac{1}{2} [[d,\phi_1],\phi_1],\\ \notag
0 & = [d,\phi_3]+ [[d,\phi_1],\phi_2]+ [[d,\phi_2],\phi_1] + \frac{1}{6}[[[d,\phi_1],\phi_1],\phi_1], 
\end{align}
turn into
\begin{align}\label{eq:d-a}
\Delta\frac{\d}{\d\phi}(\phi_1) & = [d,\phi_1] = \Delta \\ \notag
\Delta\frac{\d}{\d\phi}(\phi_2) & = [d,\phi_2] = -\frac{1}{2}[\Delta,\phi_1] \\ \notag
\Delta\frac{\d}{\d\phi}(\phi_3) & = [d,\phi_3] =-[\Delta,\phi_2] +\frac{1}{3}[[\Delta,\phi_1],\phi_1],  
\end{align}
respectively. 

We define the operad $\BV/\Delta$ to be the operad obtained by adding to $\BV$ 
the generators $\phi_i$, $i\geq 0$, with the differential $\Delta\frac{\d}{\d\phi}$  equal to zero on $\BV$ and 
given by Equations~\eqref{eq:d-a}. We use the notation $\Delta\frac{\d}{\d\phi}$ for the differential in order to point out
that it decreases the degree in $\phi$ by $1$ and increases the degree in $\Delta$ also by $1$ so looks like a differential operator 
$\Delta\frac{\d}{\d\phi}$.

\subsection{A formula for quasi-isomorphism}\label{sec:quasi}

We construct a map $\Hycomm\to\BV/\Delta$. To the generator $\mm_n\in \Hycomm(n)$ given by the fundamental cycle $[\oM_{0,n+1}]$ 
we associate an element $\theta_n$ of $\BV/\Delta(n)$ represented as a sum over all possible rooted trees with $n$ leaves, where 
\begin{itemize}
\item at the each vertex with $k$ inputs we put the $(k-1)$ times iterated product in the $\BV$-algebra.
The iterated product $m(x_1,\dots,x_k)$ is defined as $m(x_1,\dots m(x_{k-2},m(x_{k-1},x_k))\dots)$,
where $m(x_1,x_2)$ denotes the usual binary multiplication from $\BV(2)$.
Abusing the notation we denote the iterated product by the same letter $m$.
\item
Each input/output $e$ of any given vertex in a graph is enhanced by a formal parameter $\psi_{e}$.
I.~e. a vertex with $k$ inputs will be equipped with $k+1$ additional parameters.
These parameters will be used to determine the combinatorial coefficient of the graph.
\item On each leaf $e$ (an input of the graph) we put the operator $\exp(-\phi(-\psi_e))$, 
where $\psi_e$ is the defined above formal parameter associated 
to the corresponding input $e$ of the vertex where the leaf is attached. 
\item At the root (the output of the graph) we put the operator $\exp(\phi(\psi))$.
 Again, $\psi$ is a formal parameter associated to the output of the vertex, where the root is attached.
\item At the internal edge that serves as the output of a vertex $v'$ and an input of a vertex $v''$ we put the operator
\begin{equation*}
\mathcal{E}:=-\frac{\exp(-\phi(-\psi'')\exp(\phi(\psi'))-1}{\psi''+\psi'},
\end{equation*}
where $\psi'$ (respectively, $\psi''$) are attached to the output of $v'$ 
(respectively, the corresponding input of $v''$) in the same way as above.
\end{itemize}
Each graph should be considered as a sum of graphs obtained by expansion 
of all involved series in $\psi$'s, and each summand has a combinatorial coefficient 
equal to the product over all vertices of the integrals 
\begin{equation}\label{eq:psi-int}
\int_{\oM_{0,k+1}}\psi_0^{d_0}\psi_1^{d_1}\cdots \psi_k^{d_k} := \begin{cases}
\frac{(k-2)!}{d_0!d_1!\cdots d_k!}, & \mbox{if } k-2=d_0+d_1+\cdots+d_k; \\
0, & \mbox{otherwise, }
\end{cases} 
\end{equation}
where the degrees $d_0,d_1,\dots,d_k$ are precisely the degrees of $\psi$-classes 
associated to the inputs/output of a vertex. 

Note that after expansion of all exponents there are only finitely many monomials in $\psi$'s 
 that contribute in the 
total summand for $\theta_n$.
Consequently, $\theta_n$ is represented by a finite sum of combinations of multiplications and $\phi_i$'s.
 In particular, the total degree of each nonzero term is equal to $2-2n$
(recall that $\deg m =0$ and $\deg \phi_i=-2i$).

\begin{remark}
 Here $\psi$-classes and their integrals over the space $\oM_{0,k+1}$,
 as in Equation~\eqref{eq:psi-int}, should be understood as a formal notation for 
some combinatorial constants (multinomial coefficients).
 However, in Sections~\ref{sec:givental} and~\ref{sec::TRR} we clarify the geometric meaning and 
the origin of this formula.
\end{remark}

\begin{example}
\label{ex::theta_23}
 Explicit formulas for the $\theta_2$ and $\theta_3$.
\begin{align*}
\theta_2\left(x_1,x_2\right) =  & m\left(x_1,x_2\right) \\ 
\theta_3\left(x_1,x_2,x_3\right) =  & \phi_1\left(m\left(x_1,x_2,x_3\right)\right) 
+ \left( m\left(x_1,x_2,\phi_1(x_3)\right) + m\left(x_2,x_3,\phi_1(x_1)\right) + m\left(x_3,x_1,\phi_1(x_2)\right)\right)\\ 
& -\left(m\left(x_1,\phi_1\left(m(x_2,x_3)\right)\right) + m\left(x_2, \phi_1\left(m(x_1,x_3)\right)\right)
+m\left(x_3, \phi_1\left(m(x_1,x_2)\right)\right)\right).
\end{align*}
\end{example}

\begin{theorem}
 \label{thm::formula_BV-Hycomm}
Using the Leibniz rule, the map $\theta$ defined on generators by $\theta\colon \mm_n\mapsto \theta_n$ extends to a morphism of operads $\theta\colon\Hycomm\rightarrow \BV/\Delta$. Moreover, $\theta$ is a quasi-isomorphism of operads.
\end{theorem}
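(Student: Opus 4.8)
The plan is to establish two things about $\theta$: that it is a well-defined morphism of operads (i.e.\ it respects the defining relations of $\Hycomm$), and that it induces an isomorphism on cohomology. I would treat these separately, since the first is an algebraic verification and the second is a dimension-counting / comparison argument.

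\textbf{Step 1: $\theta$ is a morphism of operads.} Since $\theta$ is defined on the generators $\mm_n$ and extended by the Leibniz rule (operadic composition), the content here is that the relations in $\Hycomm$ are mapped to relations (i.e.\ to the differential $\Delta\frac{\d}{\d\phi}$ applied to something, hence to zero in cohomology — but in fact we must land on genuine relations in $\BV/\Delta$). The hypercommutative operad is quadratic, generated by the $\mm_n$ with relations coming from the boundary structure of $\oM_{0,n+1}$; equivalently, $\Hycomm$ is governed by the associativity-type WDVV relations. So the concrete task is to verify that the two ways of composing the $\theta_n$ agree, which amounts to a combinatorial identity among the graph sums weighted by the integrals in Equation~\eqref{eq:psi-int}. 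The key input will be the string and dilaton equations together with the topological recursion relations on $\oM_{0,n+1}$, which control how the $\psi$-class integrals behave under the gluing morphisms $\rho_i$. I expect the gluing edge operator $\mathcal{E}$ to be designed precisely so that the boundary contributions match; concretely, the denominator $\psi''+\psi'$ encodes the comparison of $\psi$-classes across a node, which is the algebraic shadow of the splitting formula for $\psi$-classes under $\rho_i$.

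\textbf{Step 2: $\theta$ is a quasi-isomorphism.} Here I would argue componentwise in arity $n$ and compare cohomology. The cohomology of $\BV/\Delta(n)$ should, by the construction in Section~\ref{sec::bv/delta}, be identified with the homotopy quotient of $\BV$ by $\Delta$, and the claim is that this cohomology is abstractly isomorphic to $\Hycomm(n)$ as a graded vector space (and as an $S_n$-representation). Granting such an identification of the underlying spaces — which follows from the topological statement that the homotopy quotient of framed little discs by $S^1$ is equivalent to $\oM_{0,n+1}$, realized algebraically — it suffices to check that $\theta$ is injective (or surjective) on cohomology. The cleanest route is a filtration / leading-term argument: filter by the number of $\phi_i$ insertions, and observe from Example~\ref{ex::theta_23} and the general formula that the lowest-order term of $\theta_n$ (the tree with a single vertex and no $\phi$'s, i.e.\ the iterated product $m$) is nonzero and already accounts for the generator; the higher graph contributions are corrections in higher $\phi$-degree. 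Thus on the associated graded $\theta$ reduces to the obvious map sending $\mm_n$ to the iterated product, which is visibly injective, and matching Euler characteristics (equivalently, matching the known Poincaré polynomials of $\Hycomm(n)$ and of $H(\BV/\Delta(n))$) upgrades injectivity to an isomorphism.

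\textbf{Main obstacle.} I expect Step~1, the compatibility with relations, to be the real difficulty rather than Step~2. The extension by the Leibniz rule is only consistent if the graph sums genuinely satisfy the quadratic $\Hycomm$-relations, and proving this by hand requires establishing a nontrivial combinatorial identity for the coefficients in Equation~\eqref{eq:psi-int} under operadic composition — essentially reproving the topological recursion relations in this purely algebraic graph calculus. This is exactly why the paper offers two proofs: the Givental-group approach (Section~\ref{sec:givental}) sidesteps the brute-force verification by exhibiting $\theta$ as the action of a specific Givental group element on the trivial morphism $\mm_n\mapsto m$, so that well-definedness is inherited from the group action preserving $\Hycomm$-morphisms, while the homological approach (Diagram~\eqref{eq::diag::Hycom_BV}) factors $\theta$ through a chain of already-established quasi-isomorphisms so that both properties come for free. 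I would therefore not attempt the direct combinatorial verification, but rather prove the theorem by one of these two structural routes and deduce the explicit graph formula as a consequence.
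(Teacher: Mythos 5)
Your plan correctly identifies the two structural routes the paper actually uses (the Givental group action and the diagram chase through the chain of quasi-isomorphisms), but as written it has genuine gaps in both steps. For Step 1 you explicitly decline to carry out the verification and say you would ``prove the theorem by one of these two structural routes'' --- but you do not execute either route, so the morphism property is never established. The Givental route, for instance, requires the infinitesimal action formula, the fact that it preserves the space of morphisms from $\Hycomm$ (Lemma~\ref{lem:infinitesimal}), the identity $(\Delta z).\theta_0=0$ encoding that $\Delta$ is a second-order operator, and the computation $\left(\Delta\frac{\d}{\d\phi}z^0\right).\exp(\phi(z).)\theta_0=\exp(\phi(z).)\left(\Delta\frac{\d}{\d\phi}z^0+\Delta z^1\right).\theta_0=0$ showing compatibility with the differential; none of this appears in your proposal.

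Step 2 contains a concrete error. Each $\theta_n$ is homogeneous of total degree $4-2n$, with $\deg m=0$ and $\deg\phi_i=-2i$, so for $n\geq 3$ \emph{every} monomial in $\theta_n$ carries $\phi$'s of total weight $n-2$; there is no ``$\phi$-free leading term equal to the bare iterated product'' (compare $\theta_3$ in Example~\ref{ex::theta_23}, all seven of whose terms contain $\phi_1$). The filtration by number of $\phi$-insertions therefore does not produce the associated-graded map you describe, and injectivity does not follow. The argument the paper actually uses for non-exactness is different: $\theta_n$ contains no $\Delta$, whereas every element in the image of $\Delta\frac{\d}{\d\phi}$ does, so $\theta_n$ lands in the suboperad $\Q=\ker\Delta\frac{\d}{\d\phi}\cap\langle m,\phi_i\rangle$, which injects into cohomology; combined with the one-dimensionality of $\Q(n)$ in degree $4-2n$ and the rigidity of $\Hycomm$ (Proposition~\ref{thm::unique}), this forces $\theta$ to be an isomorphism onto $\Q$. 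Finally, the identification $H^{\udot}(\BV/\Delta(n))\cong\Hycomm(n)$ cannot simply be ``granted'' from the topological equivalence of the homotopy quotient of framed little discs with $\oM_{0,n+1}$: establishing it algebraically is the content of Theorem~\ref{thm::diag:Hycom->BV} and Sections~\ref{sec::operad::definitions}--\ref{sec:diagrammatic}, i.e.\ it is the bulk of the work, not an input one may assume.
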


We present two ways to prove this theorem.

The first proof uses computations with equivariant homology. It is presented in Section~\ref{sec:diagrammatic}. 
First, we give a sequence of natural quasi-iso connecting $\Hycomm$ and $\BV/\Delta$.
Second, a careful diagram chase allows us to obtain a formula for $\theta$, 
and, in addition, a natural homological explanation of the 
Givental group action on representations of $\Hycomm$.

The second proof also consists from two steps.
The first step is the same. We observe that the cohomology of $\BV/\Delta$ coincides with $\Hycomm$.
Second, we notice that the expression for $\theta_k$ does not contain $\Delta$ and 
therefore $\theta_k\notin Im(\Delta\frac{\d}{\d\phi})$.
Third, using a certain generalization of the Givental theory we show that $\theta_k$ are $\Delta\frac{\d}{\d\phi}$-closed.
The degree count implies that $\theta$ defines a quasi-isomorphism of operads.
This proof is explained in detail in Section~\ref{sec:givental}.

\subsection{Examples} There are natural examples of the $\BV/\Delta$ algebras structures on the de Rham complexes of Poisson and Jacobi manifolds. These examples are discussed in detail in~\cite{DotShaVal} from a different perspective. 

In the case of a Poisson manifold, we consider its de Rham complex with the de Rham differential $d^{dR}$ and wedge product, and the operator $\phi_1$ equal to the contraction with the Poisson structure and $\phi_i=0$, $i=2,3,\dots$. The operator $\Delta=[d^{dR},\phi_1]$ is a $\BV$-operator, thus we have a natural structure of a $\BV/\Delta$ algebra. In the case of Jacobi manifold the $\BV/\Delta$ structure exists on the space of basic differential forms, the construction is very similar, and we refer to~\cite{DotShaVal} for details.

In both cases, the explicit formulas for $\theta_k$, $k=2,3,\dots$, gives a structure of the $\Hycomm$ algebra on the cohomology in these examples. In fact, with these formulas it is easy to see that in these cases the structure of a $\Hycomm$-algebra gives raise to a full structure of a Frobenius manifold, that is, we also have a scalar product and homogeneity with all the necessary properties.

In~\cite{DotShaVal} the structure of a $\Hycomm$ algebra is obtained in a different way, using a general result of Drummond-Cole and Vallette in~\cite{DruVal} on a homotopy Frobenius structure on the cohomology of a $\BV$-algebra, where the homotopy transfer of the $\BV$ operator $\Delta$ vanishes. In fact this result in~\cite{DruVal} is completely parallel to ours, as it is shown in~\cite{DotShaVal}, though exact match of the formulas will require more work.

\section{Circle action}
\label{sec::circle}
In this section we compare our definition of a homotopy quotient with a categorical one 
and show how this affects to the Chern classes.

\subsection{Homotopy quotient in Topology}
Consider a topological space $X$ with a chosen action of the group $S^1$.
If the action of $S^1$ is not free then the quotient space $X/S^{1}$ is not well defined.
Therefore in order to define the quotient one has to replace the space $X$ by a homotopy equivalent space $X\times ES^1$
with the free action of $S^1$. 
Recall that $ES^1$ is a contractible space with the free $S^1$-action.
The corresponding bundle $ES^{1}\stackrel{S^1}{\rightarrow} BS^1$ is called the universal $S^1$-bundle
and it's base $BS^1$ is called classifying space and known to coincide with ${\mathbb{C}\mathrm{P}^{\infty}}$.
The homotopy quotient ``$X/S^{1}$'' is defined as a factor $\frac{X\times ES^1}{S^1}$. 

There is another categorical definition which we find useful to recall.
Denote by $S^{1}\tt\Top$ (resp.$\Top$) the homotopy categories of topological spaces with (and without) action of $S^1$.
There is a natural exact functor $Triv^{S^1}:\Top \rightarrow S^{1}\tt\Top$ which assigns a trivial action of $S^1$ 
on any topological space.
The left adjoint functor to the functor $Triv^{S^1}$ is called the homotopy quotient by $S^1$:
$$
{\mathrm{Hom}}_{\Top} ( X/ S^1, Y) \simeq  {\mathrm{Hom}}_{S^1\tt\Top}(X,Triv^{S^1}(Y))
$$
In particular, if $X$ is isomorphic to the direct product $Z\times S^{1}$ the homotopy quotient $X/S^1$ is isomorphic to $Z$.

\subsection{Homotopy quotient for algebraic operads}
\label{sec::homotopy_quotients_def}
Replace the category $\Top$ by the category $dg\tt\mathcal{O}p$ of differential graded operads.
The cohomology ring of the circle is the Grassman algebra $\kk[\Delta]$
with one odd generator of degree $-1$  such that $\Delta^2=0$.
The category ${\Delta\tt dg\tt\mathcal{O}p}$ of dg-operads with a chosen embedding of the Grassman algebra $\kk[\Delta]$
replaces the category of $S^{1}\tt\Top$ of topological spaces with a circle action.
An object of the category ${\Delta\tt dg\tt\mathcal{O}p}$ is a dg-operad 
 with a chosen unary operation of degree $-1$, such that its square is equal to zero.
Any dg-operad $\Q$ admits a trivial map $\kk[\Delta]\rightarrow \Q$ with $\Delta\mapsto 0$.
This defines a functor $Triv^{\Delta}: dg\tt\mathcal{O}p \rightarrow \Delta\tt dg\tt \mathcal{O}p$.
\begin{definition}
\label{def::hom_quotient}
 The homotopy quotient by $\Delta$ is the left adjoint functor to the enrichment by trivial embedding of Grassman algebra:
I.~e. it is a functor $(\tt)/\Delta: \Delta\tt dg\tt\mathcal{O}p \rightarrow dg\tt \mathcal{O}p$ such that for any pair of operads
$\P,\Q$ there exists a natural equivalence 
$$
{\mathrm{Hom}}_{dg\tt \mathcal{O}p}(\P/\Delta,\Q)\simeq {\mathrm{Hom}}_{\Delta\tt dg\tt\mathcal{O}p} ( \P, Triv^{\Delta}(\Q)) 
$$
which is functorial in $\P$ and in $\Q$.
\end{definition}
In Section~\ref{sec::bv/delta} we have already chosen a particular model of the homotopy quotient by $\Delta$.
Let us show that this model indeed satisfies the adjunction property required by Definition~\ref{def::hom_quotient}.
First, let us repeat the construction from Section~\ref{sec::bv/delta} in a general setting.

Any given operad $\Q$ with a chosen unary operation $\Delta\in\Q(1)$ (such that $\Delta^2=0$)
may be extended
by a collection of unary operations $\phi_i$, $i=1,2,\dots$, of homological degree $\deg \phi_i = -2i$, 
and the differential prescribed by Equation~\eqref{eq:d-delta}. 
We remind that the generating series in $z$ of 
the sequence of identities on the commutators $[d,\phi_i]$  
defines a differential:
$$
\exp(-\phi_1 z^1 -\phi_2 z^2 -\ldots) d \exp(\phi_1 z^1 +\phi_2 z^2 +\ldots) = d + z\Delta.
$$
Note that the differential decreases the degree in $\phi_i$'s by $1$ and increases the degree in $\Delta$ by $1$. 
We want to keep this property in the notation for the differential; therefore, we denote it by $\Delta\frac{\d}{\d\phi}$
and this notation should be understood just as a single symbol.

\begin{proposition}
\label{lem:adjunction}
 The functor that sends an operad $\Q$ with a chosen squared zero unary operation $\Delta$ to the dg-operad 
$\left(\Q\star\kk\langle\phi_1,\phi_2,\ldots\rangle,\Delta\frac{\d}{\d\phi}\right)$
gives a particular model of the homotopy quotient $\Q/\Delta$.
I.~e. the twisted free product with $\phi$'s is the left adjoint functor to the trivial action of $\kk[\Delta]$.
\end{proposition}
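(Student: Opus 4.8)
The statement to establish is an adjunction: for every operad $\Q$ with a chosen squared-zero unary $\Delta$ and every target operad $\P$, there is a natural bijection
$$
{\mathrm{Hom}}_{dg\tt\mathcal{O}p}\!\left(\left(\Q\star\kk\langle\phi_1,\phi_2,\ldots\rangle,\Delta\tfrac{\d}{\d\phi}\right),\P\right)\;\simeq\;{\mathrm{Hom}}_{\Delta\tt dg\tt\mathcal{O}p}\!\left(\Q, Triv^{\Delta}(\P)\right),
$$
functorial in both arguments. The plan is to construct the two directions of this bijection explicitly and check they are mutually inverse. First I would unwind the right-hand side: a morphism in $\Delta\tt dg\tt\mathcal{O}p$ from $\Q$ to $Triv^{\Delta}(\P)$ is just a dg-operad morphism $f\colon\Q\to\P$ that sends $\Delta$ to $0$, since the $\kk[\Delta]$-structure on $\P$ is trivial. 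The left-hand side is a dg-operad morphism $F$ out of the free product, which by freeness is determined by its restriction $F|_{\Q}$ (an arbitrary operad morphism $\Q\to\P$) together with the images $F(\phi_i)\in\P(1)$ of the new generators, subject to the single constraint that $F$ commutes with the differentials.

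The key computation is to see what the differential-compatibility constraint says. Applying $F$ to the defining relations~\eqref{eq:d-a}, the first one, $\Delta\tfrac{\d}{\d\phi}(\phi_1)=\Delta$, forces $d_{\P}F(\phi_1)=F(\Delta)$. Now I would argue that the correspondence is
$$
F\;\longmapsto\; f:=F|_{\Q}, \qquad\text{with the observation that } f(\Delta)=d_{\P}F(\phi_1)\text{ is }d_{\P}\text{-exact.}
$$
The content is that compatibility with $\Delta\tfrac{\d}{\d\phi}$ on $\Q$ (where the differential vanishes) is automatic, so $f$ is an honest dg-morphism; and the relation $f(\Delta)=d_{\P}F(\phi_1)$ is precisely a choice of trivialization of $\Delta$. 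For the inverse direction, given $f\colon\Q\to Triv^{\Delta}(\P)$, i.e. $f(\Delta)=0$, I would take the canonical trivialization $F(\phi_i)=0$ for all $i$: then all the relations~\eqref{eq:d-a} are satisfied because every right-hand side is a commutator built from $\Delta$ and $\phi_j$'s, hence maps to $0$. This defines an $F$ with $F|_{\Q}=f$. The two assignments are mutually inverse on the nose in this strict sense, and naturality in $\P$ and $\Q$ is immediate from the explicit formulas.

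The main subtlety I anticipate is not the algebra but the \emph{well-definedness} of $\Delta\tfrac{\d}{\d\phi}$ as a differential on the free product: I must check that the prescription~\eqref{eq:d-a} extends by the Leibniz rule to a square-zero derivation. This is exactly the point of writing $d+z\Delta=\exp(-\phi(z))d\exp(\phi(z))$: the right-hand side is a conjugate of the square-zero $d$ by an invertible element, hence $(d+z\Delta)^2=0$ identically in $z$, and extracting coefficients of $z^k$ recovers both the relations and the nilpotency of $\Delta\tfrac{\d}{\d\phi}$. I would therefore first verify $\left(\Delta\tfrac{\d}{\d\phi}\right)^2=0$ from this conjugation identity, treating $z$ as a formal bookkeeping parameter, and only then run the adjunction argument. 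A secondary point worth a remark is that the adjoint is unique up to unique isomorphism, so exhibiting this one model suffices; the genuine homotopy-theoretic claim that this strict left adjoint computes the correct derived functor is part of the comparison with the topological picture and I would defer it to the discussion surrounding Definition~\ref{def::hom_quotient}.
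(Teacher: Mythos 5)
There is a genuine gap: the correspondence you describe is not a bijection, and the forward direction does not even land in the correct Hom-set. A morphism in $\Delta\tt dg\tt\mathcal{O}p$ from $\Q$ to $Triv^{\Delta}(\P)$ must send the chosen unary operation $\Delta$ to the chosen unary operation of $Triv^{\Delta}(\P)$, which is $0$; but for a dg-morphism $F$ out of $\left(\Q\star\kk\langle\phi_1,\phi_2,\ldots\rangle,\Delta\frac{\d}{\d\phi}\right)$ the restriction $f=F|_{\Q}$ only satisfies $f(\Delta)=d_{\P}F(\phi_1)$, i.e.\ $f(\Delta)$ is exact but generally nonzero, so $f$ is not a morphism to $Triv^{\Delta}(\P)$. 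Symmetrically, your proposed inverse (set $F(\phi_i)=0$) is far from surjective: a dg-morphism out of the twisted free product is the data of $F|_{\Q}$ \emph{together with} an entire trivialization $\{F(\phi_i)\}$, and two such morphisms with the same restriction but different trivializations would be collapsed by your correspondence, so the composite $F\mapsto f\mapsto F'$ is not the identity. Indeed, the \emph{strict} left adjoint of $Triv^{\Delta}$ is the naive quotient of $\Q$ by the operadic ideal generated by $\Delta$; the model with the $\phi_i$'s is adjoint only up to quasi-isomorphism, and that is precisely what makes it a \emph{homotopy} quotient rather than a strict one.

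The missing idea, which is the actual content of the paper's proof, is the acyclicity of the dg-algebra $\left(\kk\langle\Delta,\phi_1,\phi_2,\ldots\rangle/(\Delta^2),\Delta\frac{\d}{\d\phi}\right)$ --- the noncommutative algebraic replacement of the contractible space $ES^1$. The paper obtains this from Koszul duality between the Grassmann algebra $\kk[\Delta]$ and the free algebra $\kk[\delta]$: the $\phi_i$ are generators of the free algebra on the augmentation ideal of the Koszul dual, and $\Delta\frac{\d}{\d\phi}$ is the Koszul differential. The adjunction is then established homotopically, via the unit and counit maps $\varepsilon_{\P}$ and $\eta_{\Q}$ of~\eqref{eq::eps_P} and~\eqref{eq::eta_Q}, which are quasi-isomorphisms exactly because of this acyclicity. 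Your observation that $\Delta\frac{\d}{\d\phi}$ squares to zero because $d+z\Delta$ is a conjugate of the square-zero $d$ is correct and worth keeping as a preliminary check, but without the acyclicity statement, and without interpreting the Hom-sets in the homotopy category, the strict bijection you are trying to exhibit is simply false.
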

 \begin{proof}
Recall that $\kk[\Delta]$ is a skew commutative algebra with one odd generator $\Delta$,
where the skew-commutativity implies the relation $\Delta^2=0$.
This algebra is Koszul and its Koszul dual is the free algebra $\kk[\delta]$ with one even generator of degree $2$.
The free product of the Grassman algebra $\kk[\Delta]$ and the free algebra $F$ generated by 
the augmentation ideal of Koszul dual coalgebra together with Koszul differential is acyclic. 
We state that $\phi_i$'s  
is just a one possible way to find generators in the free algebra
generated by the augmentation ideal of $\kk[\delta]$ and the differential $\Delta\frac{\d}{\d\phi}$
is the corresponding description of the Koszul differential.

Therefore, the free product $\kk[\Delta]\star\kk\langle\phi_1,\phi_2,\ldots\rangle$ is a factor of the free associative algebra
generated by $\Delta$ and $\phi_i$, $i=1,2,\ldots$ by the unique relation $\Delta^2=0$.
This algebra is acyclic with respect to the differential $\Delta\frac{\d}{\d\phi}$, admits the natural splitting:
\begin{equation}
\label{eq::ES1::noncom}
\kk \stackrel{1\mapsto 1}{\hookrightarrow} 
\left(\kk\langle\Delta,\phi_1,\phi_2,\ldots\rangle/(\Delta^2),\Delta\frac{\d}{\d\phi}\right) \stackrel{\Delta,\phi_i\mapsto 0}{\twoheadrightarrow} \kk 
\end{equation}
and satisfies the following universal categorical property:
For any dg-algebra $(A,d_A)$ with a chosen dg-subalgebra $(\kk[\Delta_A],0)$ 
there exists a map of dg-algebras $\varphi_{A}:(\kk[\Delta]\star\kk\langle\phi_1,\phi_2,\ldots\rangle,\Delta\frac{\d}{\d\phi})\rightarrow (A,d_A)$
that sends $\Delta\mapsto \Delta_A$ and is functorial with respect to $A$.
One should think about the dg-algebra $(\kk[\Delta]\star\kk\langle\phi_1,\phi_2,\ldots\rangle,\Delta\frac{\d}{\d\phi})$
as a noncommutative algebraic replacement of the universal bundle $ES^1$.

We will come back to the connection with the universal bundle in the next Section~\ref{sec::chern}.
\end{proof}

For any given dg-operad $(\P,d_{\P})$ we define the quasi-isomorphic inclusion of dg-operads
\begin{equation}
\label{eq::eps_P}
\varepsilon_{\P}:(\P,d_{\P}) \rightarrow \left(\P\star\kk[\Delta]\star\kk\langle\phi_1,\phi_2,\ldots\rangle, d_{\P} + \Delta\frac{\d}{\d\phi}\right)
\end{equation}
that sends $\P$ to $\P$;
and with any dg-operad $(\Q,d_{\Q})$ with a chosen unary operation $\Delta_{\Q}\in\Q(1)$
we associate the projection of $S^{1}\tt$dg-operads
\begin{equation}
\label{eq::eta_Q}
\eta_{\Q}: \left(\Q\star\kk\langle\phi_1,\phi_2,\ldots\rangle\star\kk[\Delta], d_{\Q}+(\Delta-\Delta_{\Q})\frac{\d}{\d\phi} \right) 
\rightarrow (\Q,d_{\Q}) 
\end{equation}
that sends identically $\Q$ to $\Q$, $\Delta\mapsto\Delta_{Q}$ and $\phi_i$ maps to $0$ for all $i$.
\begin{lemma}
The morphisms $\varepsilon_{\P}$ and $\eta_{\Q}$ are quasi-isomorphisms for all $\P$ and $\Q$.
\end{lemma}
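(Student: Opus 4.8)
The plan is to reduce both claims to the acyclicity of the algebraic model of $ES^1$ established in Proposition~\ref{lem:adjunction}, proving the assertion about $\varepsilon_{\P}$ directly and then obtaining the one about $\eta_{\Q}$ from it by a spectral sequence. Throughout I would write $A:=\kk\langle\Delta,\phi_1,\phi_2,\ldots\rangle/(\Delta^2)$, equipped with the differential $\Delta\frac{\d}{\d\phi}$, regarded as an operad concentrated in arity one; this is the free product $\kk[\Delta]\star\kk\langle\phi_1,\phi_2,\ldots\rangle$. By Proposition~\ref{lem:adjunction} and the splitting~\eqref{eq::ES1::noncom}, the augmentation $A\to\kk$ is a quasi-isomorphism; since the unit spans a subcomplex, the augmentation ideal $\bar{A}$ is acyclic, i.e. $H(\bar{A})=0$. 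This is the only input I will use beyond formal manipulations.

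For $\varepsilon_{\P}$ I would use the combinatorial normal form of the free product with the unary operad $A$: every element of the target of~\eqref{eq::eps_P} is uniquely a combination of $A$-decorated $\P$-trees, that is, rooted trees with vertices labelled by $\P$ and with each leaf, internal edge and the root carrying an element of $A$, where internal edges are required to carry elements of $\bar{A}$ so that no two $\P$-vertices can be merged. The crucial observation is that, by~\eqref{eq:d-a}, the differential $\Delta\frac{\d}{\d\phi}$ always produces terms containing at least one $\Delta$, hence never outputs the unit of $A$; consequently the total differential $d_{\P}+\Delta\frac{\d}{\d\phi}$ preserves the tree shape, and the decomposition over tree shapes is a decomposition of complexes. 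Applying the K\"unneth formula over the field $\kk$ to each summand, its homology is the tensor product of $H(\P,d_{\P})$ at the vertices with one copy of $H(A)$ for every leaf and for the root and one copy of $H(\bar{A})$ for every internal edge. Since $H(\bar{A})=0$, every tree with an internal edge is acyclic; for the single-vertex corollas the complex $\bar{\P}(n)\otimes A^{\otimes(n+1)}$ has homology $\bar{\P}(n)$, concentrated on the class in which all decorations are the unit. Reassembling these contributions recovers exactly $H(\P,d_{\P})$, realised by $\P$-operations with trivial decorations, which is precisely the image of $\varepsilon_{\P}$; hence $\varepsilon_{\P}$ is a quasi-isomorphism.

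For $\eta_{\Q}$ I would first note that~\eqref{eq::eta_Q} admits the evident section $s_{\Q}\colon(\Q,d_{\Q})\to\mathcal{S}$ sending $\Q$ to $\Q$, where $\mathcal{S}$ denotes the source of $\eta_{\Q}$; this $s_{\Q}$ is a chain map and $\eta_{\Q}\circ s_{\Q}=\mathrm{id}_{\Q}$, so it suffices to prove $s_{\Q}$ is a quasi-isomorphism. I would filter $\mathcal{S}$ by the $z$-weight that assigns weight $i$ to $\phi_i$, weight $1$ to the adjoined $\Delta$, and weight $0$ to $\Q$ (so in particular to $\Delta_{\Q}$). Inspecting~\eqref{eq:d-a} one sees that $\Delta\frac{\d}{\d\phi}$ preserves this weight, so the weight-preserving part of the differential $d_{\Q}+(\Delta-\Delta_{\Q})\frac{\d}{\d\phi}$ is $d_{\Q}+\Delta\frac{\d}{\d\phi}$, while the remaining term $-\Delta_{\Q}\frac{\d}{\d\phi}$ strictly lowers the weight by one. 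Thus the associated graded complex is precisely the target of $\varepsilon_{\Q}$, that is, the case $\P=\Q$ of the previous paragraph, whose homology is $H(\Q,d_{\Q})$ concentrated in weight zero. The filtration is bounded below and exhaustive (and, for the degreewise finite operads at hand, finite in each arity and homological degree), so the spectral sequence converges; as its first page is concentrated in a single filtration degree it collapses, giving $H(\mathcal{S})\simeq H(\Q,d_{\Q})$ through $s_{\Q}$. Therefore $s_{\Q}$, and hence $\eta_{\Q}$, is a quasi-isomorphism.

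The hard part is the vanishing of the positive-weight homology in the $\varepsilon_{\P}$ step. Everything there rests on two points that must be handled carefully: the existence and uniqueness of the $A$-decorated tree normal form for a free product with a unary operad, and the fact that $\Delta\frac{\d}{\d\phi}$ never creates the unit, which is exactly what makes the tree-shape decomposition compatible with the differential and lets the K\"unneth formula combine with $H(\bar{A})=0$ to finish. Once this is in place, the $\eta_{\Q}$ step is routine spectral-sequence bookkeeping over the already-computed associated graded.
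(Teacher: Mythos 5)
Your proof is correct and rests on exactly the same key input as the paper's (one-line) argument, namely the acyclicity of the dg-algebra $(\kk[\Delta]\star\kk\langle\phi_1,\phi_2,\ldots\rangle,\Delta\frac{\d}{\d\phi})$; you simply make explicit the free-product tree normal form, the K\"unneth reduction to $H(\bar A)=0$, and the weight filtration needed to transport the result to $\eta_{\Q}$. No gaps.
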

\begin{proof}
 The proof follows from the acyclicity of the dg-algebra 
$(\kk[\Delta]\star\kk\langle\phi_1,\phi_2,\ldots\rangle,\Delta\frac{\d}{\d\phi})$.
\end{proof}

Let us also give one more explanation on why we call the data $\phi_i$'s by a choice of trivialization of the action of $S^1$.
The action of $S^1$ on a topological space $X$ is encoded in the fibration $X\times ES^1\stackrel{S^1}{\rightarrow} B$.
The trivialization of the $S^1$ action is the isomorphism of this fibration and the trivial one.
I.e. is given via isomorphism $\Phi$ of the base $B$ and the product $X\times BS^1$.
The algebraic counterpart of this isomorphism looks as follows:
$$
\Phi: Tor_{\ldot}^{\kk[\Delta]}(V^{\udot},\kk)\stackrel{\cong}{\longrightarrow} V^{\udot}\otimes Tor_{\ldot}^{\kk[\Delta]}(\kk,\kk)
$$
where $V^{\udot}=C^{\udot}(X)$. 
The trivial module $\kk$ admits a Koszul resolution 
$$
(\kk[\Delta]\otimes \kk[z], z\frac{\d}{\d\Delta} )\rightarrow \kk
$$
and we ends up with the following isomorphism of complexes:
$$
\Phi=\Phi(z): (V^{\udot}[z],d +z\Delta) \rightarrow (V^{\udot}[z],d)
$$
that is called the trivialization of the action of $\Delta$ (the trivialization of $S^1$-action).

\subsection{Chern character}
\label{sec::chern}
Suppose that $\Q$ is a topological operad with a chosen embedding $S^1\hookrightarrow \Q(1)$.
Note that the latter embedding gives, in particular, the action of ($n+1$) copies of $S^1$ on the space
of $n$-ary operations $\Q(n)$ 
via the substitution on inputs/output of operations.
It is possible to take a homotopy quotient with respect to the action on each particular input/output on the 
space of $n$-ary operations of $\Q$. We denote by $\Q/(\circ_i\Delta)$ the quotient with respect to the action
of $S^1$ on the $i$-th slot. 
Moreover, the $S^1$-action on each particular input produces a canonical $S^1$-fibration 
on the space of $n$-ary operations of the entire quotient $\Q/\Delta$.  
It is simpler to describe the algebraic counterpart of this fibration 
in order to define the first Chern class of this fibration which gives a canonical operation on a factor.
This description will be used later on to give another algebraic description of the $\psi$-classes in the moduli spaces of curves.

We hope that the reader will not be confused about no difference in the notations of the topological operad and 
the corresponding algebraic operad of its singular chains.
From now on $\Q$ means an algebraic operad with a chosen unary odd operation $\Delta$ with $\Delta^2=0$. 
Let $(\Q/\Delta)_{\epsilon_i}$ be the subset of  $n$-ary operations in $\Q/\Delta$ where we take the 
augmentation map 
$$
\epsilon: \kk\langle\phi_1,\phi_2,\ldots\rangle \twoheadrightarrow \kk 
$$
with respect to the $i$-th input of operations. I.~e. we consider only those elements of $\Q/\Delta$ 
which do not contain any nonconstant element from the algebra $\kk\langle\phi_1,\phi_2,\ldots\rangle$.
The natural inclusion of complexes $(\Q/\Delta)_{\epsilon_i}(n) \rightarrow (\Q/\Delta)(n)$  gives the algebraic model
of the  $S^1$-fibration described above.

Let $\frac{\d}{\d \phi_1}$ be the derivation of the algebra $\kk\langle\phi_1,\phi_2,\ldots\rangle$ that sends 
the generator $\phi_1$ to $1$
and all other generators $\phi_i$ for $i\geq 2$  to zero.
Let $\circ_i\frac{\d}{\d \phi_1}$ be the derivation of the set of $n$-ary operations $\Q/\Delta(n)$ obtained by applying the 
derivation $\frac{\d}{\d \phi_1}$ in the $i$-th slot of the operation.
\begin{proposition}
\label{prop::psi_algebraic}
The derivation $\circ_i\frac{\d}{\d \phi_1}$ of the complex of $n$-ary operations $\Q/\Delta(n)$
represents the evaluation of the first Chern class of the $S^1$-fibration over $\Q/\Delta$ 
associated to the $S^1$ action in the $i$-th slot.
\end{proposition}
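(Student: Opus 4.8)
The plan is to realize the proposition as the algebraic incarnation of the Gysin exact sequence of the slot-$i$ circle bundle. Recall that for an oriented $S^1$-fibration the evaluation of the first Chern class is, up to the standard identifications, the composite of the projection to the quotient with the connecting (transgression) map in the long exact sequence attached to the fibration; here the fibration is modelled by the inclusion $(\Q/\Delta)_{\epsilon_i}(n)\hookrightarrow(\Q/\Delta)(n)$, which is a sub-dg-complex because the differential $\Delta\frac{\d}{\d\phi}$ only erases $\phi$'s and creates $\Delta$'s and hence preserves the condition ``no $\phi$ in the $i$-th slot''. So I would organise the proof around the short exact sequence of complexes $0\to(\Q/\Delta)_{\epsilon_i}(n)\to(\Q/\Delta)(n)\to K_i(n)\to 0$, where $K_i(n)$ is the quotient consisting of operations carrying at least one $\phi$ in the $i$-th slot, and show that $\circ_i\frac{\d}{\d\phi_1}$ computes the resulting $c_1$-action.

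First I would check that $\circ_i\frac{\d}{\d\phi_1}$ is a genuine chain endomorphism of the complex of $n$-ary operations, i.e. that it supercommutes with the total differential. Since both operators are derivations, it suffices to evaluate the supercommutator $[\Delta\frac{\d}{\d\phi},\ \frac{\d}{\d\phi_1}]$ on the generators $\phi_k$. Using the defining formulas for $\Delta\frac{\d}{\d\phi}(\phi_k)$ (iterated commutators of $\Delta$ with the $\phi_j$'s) together with $\frac{\d}{\d\phi_1}(\phi_1)=\un$ and the centrality of $\un$, every term collapses: for instance $\frac{\d}{\d\phi_1}\bigl(-\tfrac12[\Delta,\phi_1]\bigr)=-\tfrac12[\Delta,\un]=0$, and similarly in higher degree. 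Hence $[\Delta\frac{\d}{\d\phi},\ \frac{\d}{\d\phi_1}]=0$, so $\circ_i\frac{\d}{\d\phi_1}$ descends to cohomology and raises the degree by $2$, as a putative $c_1$ must. Moreover $\circ_i\frac{\d}{\d\phi_1}$ annihilates the subcomplex $(\Q/\Delta)_{\epsilon_i}(n)$ (there is no $\phi_1$ to differentiate in the $i$-th slot), so it factors through the quotient as a degree-$2$ chain map $K_i(n)\to(\Q/\Delta)(n)$.

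The core step is to prove that this induced map $K_i(n)\to(\Q/\Delta)(n)$ is a quasi-isomorphism onto a shift of $(\Q/\Delta)(n)$, and that the resulting composite is precisely the transgression map. For this I would filter by the number of $\phi$'s sitting in the $i$-th slot; on the associated graded the $\phi$-decoration of the $i$-th slot decouples and becomes the acyclic complex $A=(\kk[\Delta]\star\kk\langle\phi_1,\phi_2,\ldots\rangle,\Delta\frac{\d}{\d\phi})$, whose acyclicity is exactly Proposition~\ref{lem:adjunction}. A short computation with the short exact sequence $0\to\kk[\Delta]\to A\to A/\kk[\Delta]\to 0$ then shows that the cohomology of the $\phi$-positive part $A/\kk[\Delta]$ is one-dimensional, generated by the class of $\phi_1$ (the connecting map sends $[\phi_1]$ to $[\Delta]$), and that $\frac{\d}{\d\phi_1}$ sends this generator to the unit. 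Feeding this back through the spectral sequence of the filtration identifies $\circ_i\frac{\d}{\d\phi_1}\colon K_i(n)\to(\Q/\Delta)(n)$ with the transgression isomorphism of the slot-$i$ fibration, so that the full composite equals cup product with the transgression of the fibre class.

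The last point, which I expect to be the main obstacle, is the normalization: one must ensure that this transgression class is the first Chern class with coefficient exactly one rather than a nonzero multiple, and that the algebraic transgression agrees with the topological one. Here I would invoke naturality --- the slot-$i$ bundle over $\Q/\Delta$ is pulled back from the universal $S^1$-bundle, so by functoriality of Chern classes it is enough to match the two in the universal situation --- together with the defining relation $\Delta\frac{\d}{\d\phi}(\phi_1)=\Delta$. This relation exhibits $\phi_1$ as the canonical primitive of the fibre fundamental class $\Delta\in\Q(1)$ (the generator of the chosen $S^1\hookrightarrow\Q(1)$), and the first Chern class of a circle bundle is by definition the transgression of the dual fibre generator; dualizing $\phi_1\mapsto\Delta$ produces exactly $\frac{\d}{\d\phi_1}$ with unit coefficient. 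Comparing, if desired, with the Koszul resolution $(\kk[\Delta]\otimes\kk[z],z\frac{\d}{\d\Delta})$ of the trivial module, under which $c_1$ is multiplication by the degree-two generator $z$, gives an independent check that $\circ_i\frac{\d}{\d\phi_1}$ and the evaluation of $c_1$ coincide.
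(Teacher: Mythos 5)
Your argument is correct in substance but follows a genuinely different route from the paper's. You prove the statement by an algebraic Gysin/transgression argument: you exhibit $(\Q/\Delta)_{\epsilon_i}(n)\hookrightarrow(\Q/\Delta)(n)$ as a short exact sequence of complexes, check that $\circ_i\frac{\d}{\d \phi_1}$ is a chain map annihilating the subcomplex, identify the induced map on the quotient $K_i(n)$ with the inverse Thom isomorphism by computing that the $\phi$-positive part of $\kk\langle\Delta,\phi_1,\phi_2,\ldots\rangle/(\Delta^2)$ has one-dimensional cohomology generated by $[\phi_1]$, and then normalize against the universal bundle. The paper instead argues entirely through the universal situation: it observes that $(\kk[\Delta]\star\kk\langle\phi_1,\phi_2,\ldots\rangle,\Delta\frac{\d}{\d\phi})$ satisfies, in the noncommutative world, the same acyclicity-plus-universal-property characterization of $ES^1$ as the commutative model $(\kk[\Delta,u],\Delta\frac{\d}{\d u})$, in which $u$ is the multiplicative generator of $H^{\udot}(BS^1;\kk)$ and $\frac{\d}{\d u}$ is the standard evaluation of the first Chern class; the abelianization quasi-isomorphism~\eqref{eq::phi_i->u} sends $\phi_1\mapsto u$ and $\phi_i\mapsto 0$ for $i\geq 2$, and intertwines $\frac{\d}{\d\phi_1}$ with $\frac{\d}{\d u}$, which finishes the proof in one stroke. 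Your route is longer but more self-contained at the chain level: it explains directly why only $\phi_1$, and none of the higher $\phi_i$, can contribute (via $H(A/\kk[\Delta])=\kk\cdot[\phi_1]$), and your verification that $[\Delta\frac{\d}{\d\phi},\frac{\d}{\d\phi_1}]=0$ on the generators is a computation the paper only asserts. The one soft spot is the filtration step: if you filter by the number of $\phi$'s in the $i$-th slot, the differential strictly decreases that number, so on the associated graded the slot decoration does not literally ``decouple as the acyclic complex $A$'' (the induced differential on the slot factor vanishes there); you need either a further filtration or a direct contracting homotopy, in the spirit of the proof of Lemma~\ref{lem::uDelta->semiDelta}, to make that page of the spectral sequence precise. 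The paper's universal-property argument sidesteps this entirely, at the cost of taking as given the identification of $\frac{\d}{\d u}$ with $c_1$-evaluation in the commutative model.
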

\begin{proof}
The Chern class is defined as a generator of the cohomology of the Eilenberg-Maclein space $BS^1$ 
(the base of the universal bundle). In order to switch to algebra we have to reformulate the required 
categorical properties of the universal bundle in algebraic terms.
First, let us formulate the desired property in the category of commutative dg-algebras since 
the homology functor is the map from topological spaces to commutative algebras.
The commutative dg-algebra $(\kk[\Delta,u],\Delta\frac{\d}{\d u})$
is an acyclic dg-algebra that satisfy the universal property:
for any commutative dg-algebra $(A,d_A)$ with a chosen dg-subalgebra $(\kk[\Delta],0)$ 
there exists a map of dg-algebras $\varphi_{A}:(\kk[\Delta,u],\Delta\frac{\d}{\d u})\rightarrow (A,d_A)$
that sends $\Delta\mapsto \Delta$ and is functorial with respect to $A$.
The generator $u$ is the multiplicative generator of $H^{\udot}(BS^1;\kk)$ and the derivation 
$\frac{\d}{\d u}$ coincides with the evaluation of the first Chern class of the circle bundle.
Second, we notice that 
the dg-algebra  $(\kk[\Delta]\star\kk\langle\phi_1,\phi_2,\ldots\rangle,\Delta\frac{\d}{\d\phi})$
is an acyclic dg-algebra satisfying the same universal property, but in the category of noncommutative algebras.
Now the generator $\phi_i$ corresponds to the additive generators of $H^{2i}(BS^1;\kk)$.
There exists a natural quasi-iso projection between these two algebras:
\begin{equation}
\label{eq::phi_i->u}
\xymatrix{
ab: \left(\kk\langle\Delta,\phi_1,\phi_2,\ldots\rangle / (\Delta^2=0), \Delta\frac{\d}{\d \phi}\right)
\ar@{->>}[r] 
&
\left( \kk[\Delta,u], \Delta\frac{\d}{\d u}\right).
}
\end{equation}
that sends $\Delta$ to $\Delta$, $\phi_1$ to $u$ and all other $\phi_i$, for $i\geq 2$ to $0$.
Moreover, the derivation $\frac{\d}{\d\phi_1}$ of the left hand side of~\eqref{eq::phi_i->u}
 commutes with the differential and 
is mapped to the derivation $\frac{\d}{\d u}$ on the right and, therefore, coincides (on the homology level)
with the evaluation of the first Chern class.
\end{proof}

\section{Operads involved: definition and notation}
\label{sec::operad::definitions}

In this section we recall the definitions of algebraic operads  that 
correspond to the topological operads of open and closed moduli spaces of curves of zero genus.
We follow the papers of Getzler~\cite{Getzler_genus0,Getzler_grav}.
Since we want to work with precise formulas, we specify 
algebraic generators and relations in these operads.

We also give definitions of the operads involved in Section~\ref{sec:diagrammatic} 
in the main commutative diagram~\eqref{eq::diag::Hycom_BV}
used to derive the equivalence of $\Hycomm$ and $\BV/\Delta$.

We use the notation $\circ_l$ for the operadic compositions in the $l$'th slot.
I.~e. for an operad $\P$ and a pair of finite sets $I,J$ 
the composition $\circ_{l}:\P(I\sqcup\{l\})\otimes\P(J)\rightarrow \P(I\sqcup J)$ is a substitution of operations
from $\P(J)$ into the slot $l$ of operations from $\P(I\sqcup\{l\})$.
The corresponding cocomposition map $\P^{\dual}(I\sqcup J)\rightarrow \P^{\dual}(I\sqcup\{l\})\otimes\P^{\dual}(J)$
for the dual cooperad $\P^{\dual}$ will be denoted by $\mu_l$ or just by $\mu$ if the precise index becomes clear from the context.

There are two standard ways to think of elements of an operad/cooperad in terms of its (co)generators.
The first way in terms of tree monomials  represented by planar trees 
and the second one is in terms of compositions/cocompositions of operations presented by formulas with brackets.
Our approach is somewhere in the middle: in most cases, we prefer (and strongly encourage the reader)
to think of tree monomials, but to write formulas required for definitions and 
proofs in the language of operations since it makes things more compact.
While using the language of operations/cooperations we always suppose that the (co)operation that is attached to the root vertex 
is written in the leftmost term.

\subsection{$\BV$ and framed little discs operad}
\label{sec::BV::framed_little_discs}

The space of configurations of the small little discs without intersections inside the unit disc form
one of the most well known topological operad.
The boundary of the unit disc is considered as an output and the boundaries of the inner small discs 
are considered as inputs. This means that the composition rules are defined by gluing the boundary of the inner disc of 
the outgoing operation with the outer boundary of the incoming operation.
Following May~\cite{May} we use the name $E_d$ for this operad where $d$ is a dimension of the disc.
We restrict ourself to the case $d=2$.
It is also known that operad $E_2$ is formal over $\mathbb{Q}$ (see e.g.~\cite{Tamarkin_form,Kontsevich_form}) 
and its homology operad
coincides with the operad of Gerstenhaber algebras.

Recall, that the operad $\Gerst$ of Gerstenhaber algebras is a quadratic operad generated by two binary operations:
the commutative associative multiplication and the Lie bracket of degree $-1$.
The quadratic operadic relations consists of:
 the associativity of multiplication, Jacobi identity for the bracket
and the Leibniz identity for their composition:
$$
[a\cdot b, c] = \pm[a,c]\cdot b \pm a\cdot [b,c]
$$
Moreover, the space of $n$-ary operations $\Gerst(n)$ form a coalgebra, such that the composition maps 
are compatible with comultiplications in these coalgebras. We will come back later to this description
of the Gerstenhaber operad in Section~\ref{sec::grav}.

Let us mark a point on the boundary circle of each inner disk in a configuration from $E_2(n)$.
This leads to a description of the space of $n$-ary operations of the operad of framed little discs
which we denote by $FE_2$. The composition rules in $FE_2$ are also defined 
by gluing the boundary of the inner disc of 
the outgoing operation with the outer boundary of the incoming operation but now the marked point of the inner circle should
be glued with the north pole of the outer circle. I.~e. one has to rotate the incoming configuration with respect to the angle 
prescribed by the marked point in the inner circle of the outgoing configuration.
This operad is also known to be formal (\cite{Severa,Salvatore}) and the homology operad coincides with 
the operad of Batalin-Vilkovisky algebras (shortly denoted by $\BV$).

The operad $\BV$ is generated by the binary commutative associative multiplication
and a unary operation $\Delta$ of degree $-1$ such that $\Delta^2=0$ and $\Delta$ is a differential 
operator of the second order with respect to the multiplication.
The latter statement is equivalent to the following so-called $7$-term relation:
$$
\Delta(abc) - (\Delta(ab)c + \Delta(bc)a +\Delta(ca)b) + (\Delta(a)bc + \Delta(b)ca +\Delta(c)ab) = 0
$$
We omit the precise signs that comes from the Koszul sign rule in the $\Z$-graded settings. 

Note that the topological description of the operad of framed little discs is presented as a semi-direct 
product (or semi-direct composition) of the little discs operad $E_2$ and the group of rotations $S^1$.
The topological definition of the semi-direct composition of a group and an operad is given in~\cite{Salvatore_fE_d}.
In our case, the group is $S^1$, and the algebraic counterpart consists of the semi-direct product 
of the Gerstenhaber operad with a free skew-commutative algebra $\kk[\Delta]$ generated by a unique generator $\Delta$ of degree $-1$.
This leads to the following equality of operads:
$$
\BV = \Gerst \ltimes \kk[\Delta].
$$
Here the semi-direct product $\Gerst \ltimes \kk[\Delta]$ means the operad generated by
the binary commutative multiplication, Lie bracket and  unary operation $\Delta$ subject to relations for 
multiplication and bracket as in $\Gerst$, $\Delta^2=0$ as in the skew-commutative algebra $\kk[\Delta]$
and the following commutation relation between $\Delta$ and generators of $\Gerst$:
$$
\{ \Delta, \text{ multiplication}\} = \text{ Lie bracket} , \qquad 
\{ \Delta, \text{ Lie bracket}\} = 0
$$
The patterned brackets denotes the operadic commutator. In particular, the operadic commutator of a unary operation
$\Delta$ and an $n$-ary operation $\alpha(\tt,\ldots,\tt)$ means the following expression with $n+1$ terms:
$$
\{\Delta, \alpha(\tt,\ldots,\tt)\}:=
\Delta\circ\alpha(\tt,\ldots,\tt) - \sum_{i=1}^{n} \alpha(\tt,\ldots,\tt)\circ_i \Delta.
$$
We will come back later to the precise description of the spaces of $n$-ary operations of $\Gerst(n)$ and $\BV(n)$ 
in Sections~\ref{sec::grav} and \ref{sec::BV=Gerst*Delta} respectively.
 
\subsection{Closed moduli spaces of zero genus}
\label{sec::open_close}
The union of spaces of compactified moduli spaces of curves of zero genus form an operad.
This operad is formal. Its homology is called $\Hycomm$ (the operad of hypercommutative algebras).

The algebraic description of the operad $\Hycomm$ looks as follows.
The operad $\Hycomm$ has one generator in each arity grater or equal to 2.
The generator $\mm_k$ of arity $k$ is of degree $(4-2k)$ and is given by the fundamental cycle $\mm_k:=[\oM_{0,k+1}]$.
The generators satisfy the following quadratic relations (here $a,b,c,x_1,\dots,x_n$, $n\ge 0$, are elements of a $\Hycomm$-algebra):
\begin{equation}\label{hycom_rel}
\sum_{S_1\amalg S_2=\{1,\dots,n\}} \pm \mm_{|S_2|+2}(\mm_{|S_1|+2}(a,b,x_{S_1}),c,x_{S_2})
= \sum_{S_1\amalg S_2=\{1,\dots,n\}} \pm \mm_{|S_2|+2}(a,\mm_{|S_1|+2}(b,c,x_{S_1}),x_{S_2}).
\end{equation}
Here, for a finite set $S=\{s_1,\dots,s_k\}$, $x_S$ denotes for $x_{s_1},\dots,x_{s_k}$, and $\pm$ means the Koszul
sign rule.
Let us define a family of binary operations $m_x(\tt,\tt)$ on $V$ parametrized by the same space $V$:
$$
\forall x\in V \text{ let }
m_{x}(a,b):= \sum_{n\geq 0} \frac{1}{n!}\mm_{n+2}(a,b,x,\ldots,x)
$$
Then Equation~\eqref{hycom_rel} is equivalent to the associativity of the multiplication $m_x(\tt,\tt)$ for all $x\in V$. 
This observation explains the relation between hypercommutative algebras and Frobenius manifolds.

The first Chern class of the tangent bundle at the $i$'th marked point on $\oM_{0,n+1}$ is usually denoted by $\psi_i$.
Let $\mm_{n}^{d_0 d_1\ldots d_n}$ be the cycle corresponding to the evaluation 
of the product of $\psi$-classes of corresponding degrees on the fundamental cycle of the space of curves:
$$\mm_{n}^{d_0 d_1\ldots d_n} := \psi_0^{d_0}\psi_1^{d_1}\ldots\psi_n^{d_n}[\oM_{0,n+1}].$$
These classes satisfy the so-called \emph{Topological Recursion Relations} that are quadratic linear
relations in the operadic sense:

\begin{align*}
& \mathrm{m}^{(d_0+1)d_1\cdots d_n} + \mathrm{m}^{d_0\cdots d_{i-1}(d_i+1) d_{i+1}\cdots d_n} = 
\sum_{\begin{smallmatrix}
S_1\sqcup S_2 
\sqcup \{0,i\} \\
= \{0,\dots,n\} 
\end{smallmatrix}} 
\mathrm{m}^{d_0d_{S_1}0} \circ_{|S_1|+1} \mathrm{m}^{0d_id_{S_2}} 
& & \forall 1\leq i \leq n ; \\
& \mathrm{m}^{(d_0+1)d_1\cdots d_n} = 
\sum_{\begin{smallmatrix}
S_1\sqcup S_2 
\sqcup \{0,i,j\} \\
= \{0,\dots,n\} 
\end{smallmatrix}} 
\mathrm{m}^{d_0d_{S_1}0} \circ_{|S_1|+1} \mathrm{m}^{0d_id_jd_{S_2}} 
& & \forall 1\leq i, j \leq n. 
\end{align*}
Here we denote by $d_S$, $S=\{s_1,\dots,s_k\}$, the sequence $d_{s_1}\cdots d_{s_k}$. We will come
back later to TRR equations in Section~\ref{sec::TRR}. For more details see~\cite{Man}.

\subsection{Open moduli spaces of zero genus}
\label{sec::open:moduli}
The shifted homology of the union of spaces of open moduli spaces of curves of zero genus also form a formal operad.
The corresponding algebraic operad is called $\Grav$
(the operad of gravity algebras). It was studied by Getzler in~\cite{Getzler_genus0}, in particular, he proved that $\Grav$ and $\Hycomm$ are Koszul dual to each other.

An algebra over~$\Grav$ is a chain complex with graded anti-symmetric products
\begin{equation} \label{eq:mmg-first}
\mmg_n[x_1,\dots,x_n]\colon A^{\otimes n}\to A
\end{equation} 
of degree $2-n$ that satisfy the relations: 
\begin{align}
\label{relgrav}
& \sum_{1\le i<j\le k}
\pm \mmg_{k+l-1}[\mmg_2[a_i,a_j],a_1,\dots,\widehat{a_i},\dots,\widehat{a_j},\dots,a_k,
b_1,\dots,b_l] 
\\ \notag &
= \begin{cases} \mmg_{l+1}[\mmg_k[a_1,\dots,a_k],b_1,\dots,b_l] , & l>0 , \\
0 , & l=0, 
\end{cases} 
\end{align}
for all $k>2$, $l\ge0$, and $a_1,\dots,a_k,b_1,\dots,b_l\in A$. For example, in the case of $k=3$ and $l=0$, 
we obtain the Jacobi relation for~$\mmg_2[\cdot,\cdot]$.

Once again, Getzler proved in~\cite{Getzler_genus0} that $\Hycomm$ and $\Grav$ are Koszul dual operads.
Moreover for all $n\geq 2$ the generators $\mm_n\in\Hycomm(n)$ 
and $\mmg_n\in\Grav(n)$ are Koszul dual generators in these operads.\footnote{
We are a bit cheating here because Koszul duality gives the duality between generators and cogenerators.
But there is no reason to separate generators and cogenerators in our particular situation
because the corresponding 
subspaces of homological degrees $2n-4$ and $n-2$ in $\Hycomm(n)$ and $\Grav(n)$ respectively are one-dimensional.}
In particular, the associativity relation for the commutative multiplication $\mm_2\in\Hycomm(2)$ is a relation 
Koszul dual to the Jacobi relation for the Lie bracket $\mmg_2\in\Grav(2)$.

Let us also mention another result due to Getzler which hints the desired connection between $\Hycomm$ and $\BV$.
The space of cotangent lines at the $i$-th marked point of curves from $\mathcal{M}_{0,n+1}$
forms a line bundle over the open moduli space $\mathcal{M}_{0,n+1}$.
Consider the product of corresponding $(n+1)$ principal $U(1)$-bundles over $\mathcal{M}_{0,n+1}$,
where the factors are numbered by the marked points.
\begin{statement}(\cite{Getzler_grav})
\label{stat::S1->BV->Grav}
The homology of the total space of the $(S^1)^{\times (n+1)}$-bundle over $\mathcal{M}_{0,n+1}$ (associated 
with the product of cotangent lines at the marked points of a curve) coincides with the
space of $n$-ary operations in the operad $\BV$. 
\end{statement}
We give the algebraic counterpart of this statement in the next section.

\subsection{Gerstenhaber and gravity operads}
\label{sec::grav}

Getzler observed that the $S^1$-equivariant homology of the Gerstenhaber operad is isomorphic to the gravity operad.
This statement has very clear geometric background, see~\cite{Getzler_grav}, since the Gerstenhaber operad is the homology of the little disk operad. 
We recall the algebraic counterpart of this isomorphism.

It is easier to compute the cohomology rather than homology of the space of little disks 
(it was done by Arnold in~\cite{Arnold}). This way we obtain a description of the cooperad dual to the Gerstenhaber operad. 
The space of $n$-ary cooperations of the cooperad $\Gerst^{\dual}$
form a so-called Orlik-Solomon algebra:
\begin{equation*}
\Gerst^{\dual}(n):=\frac{\kk\left[\left\{w_{ij}\right\}_{1\leq i,j\leq n,\ i\ne j}\right]}{
\left( w_{ij} - w_{ji},
w_{ij} w_{jk} + w_{jk} w_{ki} + w_{ki} w_{ij}
\right)
}
\end{equation*}
Here we mean that $\Gerst^{\dual}(n)$ is a quotient modulo an ideal of the free graded commutative algebra generated by $w_{ij}$, $\deg\, w_{ij}=1$. 

The cooperad structure satisfies the Leibniz rule with respect to the product structure in the algebra $\Gerst^{\dual}(n)$, $n\geq 2$.
Therefore, it is enough to define the cooperad structure 
$\mu:\Gerst^{\dual}(I\sqcup J) \rightarrow \Gerst^{\dual}(I\sqcup\{*\})\otimes\Gerst^{\dual}(J)$ on the generators $w_{ij}$. By definition,
\begin{equation}
\mu(w_{ij}) = \begin{cases}
               w_{ij}\otimes 1 \text{, if } i,j\in I; \\
               w_{i*}\otimes 1 \text{, if } i\in I, j\in J; \\
               1\otimes w_{ij}\text{, if } i,j\in J.
              \end{cases}
\label{eq::coordinates_in_BV}
\end{equation}

There is an action of the circle $S^1$ on the little discs operad via the rotation of the outer circle.
The corresponding coaction of the generator $\Delta$ of the first cohomology of the circle $S^1$ on the space $\Gerst^{\dual}(n)$ 
is given by the following operator:
\begin{equation}
\label{eq::Delta_act_gerst}
 \frac{\d}{\d{w}} := \sum_{1\leq i<j \leq n} \frac{\d}{\d w_{ij}}.
\end{equation}
The action of the operator $\frac{\d}{\d{w}}$ on $\Gerst^{\dual}$ is dual to the action of the operator $\Delta$ on $\Gerst$.
\begin{statement}(\cite{Getzler_grav})
The action of the operator $\Delta$ is free on the Gerstenhaber operad $\Gerst$. The image of $\Delta$ coincides with its kernel and is isomorphic to  the gravity operad.
\label{stat::Gerst_Grav}
\end{statement}

Let us define a homotopy model for the gravity operad. We use standard manipulations with equivariant homology. 
We consider the free polynomial algebra $\kk[\delta]$, $\delta$ is even, as the Koszul dual of the algebra $\kk[\Delta]$.

\begin{definition} By $\kk[\delta]\otimes\Gerst$ we denote a dg-operad with 
$(\kk[\delta]\otimes\Gerst)(n):= \kk[\delta]\otimes(\Gerst(n))$ for $n\geq 2$ and
$(\kk[\delta]\otimes\Gerst)(1):= \Gerst(1) = \kk$.
The composition is defined by
\begin{equation}
(\delta^{a}\otimes \alpha )\circ (\delta^b\otimes \beta ) := \delta^{a+b} \alpha\circ\beta \qquad
 \text{ for } \alpha,\beta \in \Gerst(n).
\end{equation}
The $\BV$-operator defines the differential $\delta\Delta\colon \delta^{a}\alpha\mapsto \delta^{a+1}\Delta(\alpha)$ on this operad.
\end{definition}

Let us rephrase Statement~\ref{stat::Gerst_Grav} in the language of cooperads. 
We use the notation $u$ for the even variable linear dual to $\delta$
and $\kk[u]\otimes\Gerst^{\dual}(n)$ for the space linear dual to $\kk[\delta]\otimes\Gerst(n)$ for all $n\geq 2$.
\begin{lemma}\label{lem::gerst->grav}
 The augmentation map of dg cooperads 
\begin{equation}
\varepsilon:(\kk[u]\otimes\Gerst^{\dual},\frac{\d}{\d{u}}\frac{\d}{\d{w}})
 \twoheadrightarrow  
\left( \Grav^{\dual} , 0\right) 
\label{eq::augmentation:Gerst->Grav}
\end{equation}
that maps $u\mapsto 0$ and 
$\Gerst^{\dual}\twoheadrightarrow 
\Gerst^{\dual}/(Im\frac{\d}{\d w}) = \Grav^{\dual}$
is a quasi-isomorphism.
\end{lemma}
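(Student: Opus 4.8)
The plan is to prove that the augmentation map $\varepsilon$ in~\eqref{eq::augmentation:Gerst->Grav} is a quasi-isomorphism by analyzing the bicomplex structure of $(\kk[u]\otimes\Gerst^{\dual},\frac{\d}{\d u}\frac{\d}{\d w})$ and reducing the computation to a single fixed arity $n$, where everything becomes a statement about the Orlik--Solomon algebra $\Gerst^{\dual}(n)$ equipped with the degree-one operator $\frac{\d}{\d w}=\sum_{i<j}\frac{\d}{\d w_{ij}}$. Since the differential acts arity-by-arity and commutes with the cooperadic cocompositions (by the dual of Statement~\ref{stat::Gerst_Grav}), it suffices to verify that for each $n$ the map of complexes $(\kk[u]\otimes\Gerst^{\dual}(n),\frac{\d}{\d u}\frac{\d}{\d w})\to \Grav^{\dual}(n)$ induces an isomorphism on cohomology.

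First I would make precise what the target is. By Statement~\ref{stat::Gerst_Grav}, the operator $\frac{\d}{\d w}$ dual to $\Delta$ has image equal to its kernel on $\Gerst^{\dual}(n)$, and this common space is $\Grav^{\dual}(n)$; equivalently, $\frac{\d}{\d w}$ makes $\Gerst^{\dual}(n)$ into an acyclic complex except in the appropriate spot, with $\Grav^{\dual}(n)=\ker(\frac{\d}{\d w})/\operatorname{im}(\frac{\d}{\d w})=\Gerst^{\dual}(n)/\operatorname{im}(\frac{\d}{\d w})$. The key input is therefore the acyclicity of the two-step complex coming from a square-zero operator whose image coincides with its kernel.

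Next I would run the standard Koszul-type computation for the total complex. The space $\kk[u]\otimes\Gerst^{\dual}(n)$ is filtered by powers of $u$, and the differential $\frac{\d}{\d u}\frac{\d}{\d w}$ lowers the $u$-degree by one while applying $\frac{\d}{\d w}$. Because $\kk[u]$ is the Koszul dual of $\kk[\Delta]$, the pair $(\frac{\d}{\d u},\frac{\d}{\d w})$ assembles into the Koszul complex whose homology computes $\mathrm{Tor}^{\kk[\Delta]}$ against the module $\Gerst^{\dual}(n)$. Using that $\frac{\d}{\d w}$ has image equal to kernel, the spectral sequence of the $u$-filtration collapses: all the internal columns cancel in pairs via the acyclicity of $\frac{\d}{\d w}$ on its image, and the only surviving cohomology is the quotient $\Gerst^{\dual}(n)/\operatorname{im}(\frac{\d}{\d w})$ sitting at $u$-degree zero, which is exactly $\Grav^{\dual}(n)$. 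This identifies the induced map on cohomology with the projection defining $\varepsilon$, proving it is a quasi-isomorphism.

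The main obstacle will be handling the bookkeeping of the double grading carefully so that the spectral-sequence collapse is genuine rather than merely formal: one must check that the image and kernel of $\frac{\d}{\d w}$ match up across adjacent $u$-degrees so that the higher differentials in the spectral sequence vanish, and that no hidden extension problems or surviving classes appear at positive powers of $u$. This is where Statement~\ref{stat::Gerst_Grav}---the equality $\operatorname{im}(\frac{\d}{\d w})=\ker(\frac{\d}{\d w})$---does the essential work, since it is precisely this equality that forces the Koszul complex to be acyclic away from $u$-degree zero; without it one would pick up extra cohomology. Finally I would note that compatibility of $\varepsilon$ with cocompositions is inherited from the cooperadic structure on $\Gerst^{\dual}$ via~\eqref{eq::coordinates_in_BV}, so the quasi-isomorphism is indeed a morphism of dg-cooperads.
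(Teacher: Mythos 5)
Your proof is correct and follows exactly the route the paper intends: the paper states Lemma~\ref{lem::gerst->grav} without proof, presenting it as a rephrasing of Getzler's Statement~\ref{stat::Gerst_Grav}, and your argument simply makes that explicit by reducing to a fixed arity and using $\operatorname{im}(\frac{\d}{\d w})=\ker(\frac{\d}{\d w})$ to see that the cohomology of $(\kk[u]\otimes\Gerst^{\dual}(n),\frac{\d}{\d u}\frac{\d}{\d w})$ is concentrated in $u$-degree zero and equals $\Gerst^{\dual}(n)/\operatorname{im}(\frac{\d}{\d w})=\Grav^{\dual}(n)$. The only cosmetic remark is that no spectral sequence is really needed: choosing a splitting $\Gerst^{\dual}(n)=\ker(\frac{\d}{\d w})\oplus C$ makes the complex decompose into two-term acyclic pieces plus the surviving quotient, so the collapse you worry about is automatic.
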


In particular, the map $\varepsilon$ maps any basic element $w_{ij}\in\Gerst^{\dual}(n)$
 to the unique $n$-ary cogenerator $\mmg_n$ of the gravity cooperad.
The precise homological grading is discussed in the next section.

\subsection{Bar complexes}
\label{sec::Bar_Gerst}
\label{sec::bar_complex}

In this section we recall the general definition of cobar complex and the precise formulation of Koszul self-duality 
for the operad $\Gerst$ and Koszul resolution of $\Hycomm$ via a cobar complex of $\Grav$.

Consider a cooperad $\P^{\dual}$ with a cocomposition $\mu: \P^{\dual} \rightarrow \P^{\dual}\circ \P^{\dual}$.
Let $\P^{\dual}_{+}$ be the augmentation ideal. In all our examples $\P^{\dual}(1)=\kk$ and 
the augmentation ideal $\P^{\dual}_{+}$
is equal to $\oplus_{n\geq 2}\P^{\dual}(n)$.
The cobar complex $\Bar(\P^{\dual})$ is a free dg-operad generated by the shifted space $\P^{\dual}_{+}[-1]$.
The cocomposition $\mu$ defines a differential of degree $1$ on generators. Using the Leibniz rule we extend it 
to the whole cobar complex $\Bar(\P^{\dual})$.

In~\cite{Getzler_Jones} it is proved that the operad $\Gerst$ is Koszul self-dual 
up to an appropriate even shift of homological degree. Pure algebraic proof of that fact was first given in~\cite{Markl_distr_law}.
Let us specify the desired homological shift. Note that Getzler defined two different types of grading on $\Grav$ in \cite{Getzler_genus0,Getzler_grav}. They differ by the even shift $s^2$ on the Gerstenhaber operad that we define now.  
By $s^{2}\Gerst^{\dual}$ we denote a quadratic cooperad whose $n$-th space is given by $s^2\Gerst^{\dual}(n) = \Gerst^{\dual}(n)[2n-2]$.
In other words, we can define $s^2\Gerst^{\dual}$ as a quotient of a free cooperad generated by binary operations modulo an ideal exactly in the same way as $\Gerst^{\dual}$, but we shift by $2$ the homological degree of the binary generators. 

The Koszul self-duality means that the natural projection of dg-operads
\begin{equation}
\label{eq::BGerst->Gerst}
\pi: \left(\Bar(s^{2}\Gerst^{\dual}),\mu\right) \twoheadrightarrow \left(\Gerst,0\right)
\end{equation}
 is a quasi-isomorphism.
Here the map $\pi$ interchanges the multiplication and the bracket. In particular, under $\pi$
\begin{align}
\label{eq::pi::BGerst->Gerst}
w_{12} \in \Gerst^{\dual}(2) & \mapsto  \text{multiplication} \\ \notag
1\in \Gerst^{\dual}(2) & \mapsto  \text{Lie bracket}  \\ \notag
\Gerst^{\dual}(k) & \to  0 \text{ for } k >2.
\end{align}

In order to give a similar construction for the resolution of the operad $\Hycomm$, we consider the cobar complex
of the equivarient model of the gravity operad:
\begin{equation}
 \Bar(\kk[u]\otimes s^2\Gerst^{\dual}) \stackrel{\varepsilon}{\twoheadrightarrow} \Bar(\Grav) 
\stackrel{\kappa}{\twoheadrightarrow} \Hycomm. 
\label{eq::res_gerst_to_hycom}
\end{equation}
The differential $d$ on $\Bar(\kk[u]\otimes s^2\Gerst^{\dual})$ is a sum of two parts.
The first summand is equal to the inner differential $\frac{\d}{\d{u}}\frac{\d}{\d{w}}$.
The second summand is given by cocomposition $\mu$ defined by Equation~\eqref{eq::coordinates_in_BV}.
For example, on a generator $\frac{u^k}{k!} f(w_{ij})$, where $f$ is a monomial in $w_{ij}$, $1\leq i\neq j \leq n$,  
the differential is given by 
\begin{equation*}
 d\left( \frac{u^k}{k!} f(w_{ij}) \right) = \frac{u^{k-1}}{(k-1)!} \sum_{i,j} \frac{\d f}{\d w_{ij}} +
\sum_{\begin{smallmatrix}I\sqcup J = [n], |J|\geq 2, |I|\geq 1,\\k_1+k_2=k\end{smallmatrix}} (-1)^{\deg_{w}{f^{I}}} \frac{u^{k_1}}{k_1!}f^{I}\otimes \frac{u^{k_2}}{k_2!}f^{J}
\end{equation*}
Since $f\in \Gerst^\dual(n)$ is a monomial in $w_{ij}$,  
for each decomposition $I\sqcup J = [n]$ we have a uniquely defined pair of monomials
$f^{I}\in \Gerst^\dual(|I|+1)$ and $f^{J}\in \Gerst^\dual(|J|)$.
It is important for Koszul sign rule in future computations to recall once again the degree of a particular generator of the cobar complex:
$$
\deg (\frac{u^k}{k!} f(w_{ij})) = 2-2n +2k +\deg_{w}f + 1= 3-2(n-k) +\deg_{w} f
$$

\subsection{Applying the homotopy quotient and the free product to gravity operad}

Let us apply the composition of functors we defined in Section~\ref{sec::homotopy_quotients_def} to the free dg-model of the
operad of hypercommutative algebras discussed in Equation~\eqref{eq::res_gerst_to_hycom}.
I.~e. in this section we describe the dg-operad which is the homotopy quotient by $\Delta$ of the free product with $\kk[\Delta]$
of the dg-operad $\Bar(\kk[u]\otimes s^2\Gerst^{\dual})$.

Consider first the image of the free product functor $\Bar(\kk[u]\otimes s^2\Gerst^{\dual})\star \kk[\Delta]$.
Note that $\kk[u]$ comes from the cohomology ring of $BS^1$ and, therefore, 
it is natural to define the differential which interacts the action of $\Delta$ and $u$:
\begin{equation*}
\Delta^{ad}\frac{\d}{\d u}\colon \gamma \mapsto \left\{\Delta,\frac{\d \gamma}{\d u}\right\} = \Delta \circ \frac{\d \gamma}{\d u} 
 -  \sum_{i=1}^{n} \pm \frac{\d \gamma }{\d u}\circ_{i} \Delta.
\end{equation*}
That is, the operator $\frac{\d}{\d u}$ acts on $n$-ary operation $\gamma$  
and $\Delta^{ad}\frac{\d}{\d u}$ acts as the commutator of 
$\frac{\d \gamma}{\d u}$ and $\Delta$. Note that operators $\Delta^{ad}$ and $\frac{\d}{\d u}$ commute.

The following corollary follows directly from the proof of Proposition~\ref{lem:adjunction}:
\begin{corollary}
\label{lem::delta/delta}
The natural projection that takes $\Delta,\phi_1,\phi_2,\ldots$ to $0$ is a quasi-isomorphism of dg-operads 
\begin{equation}
 \label{eq::j_def}
\left(
\frac{
\Bar(\kk[u]\otimes s^2\Gerst^{\dual})\star \kk[\Delta]}{\Delta},
\frac{\d}{\d{u}} \frac{\d}{\d{w}} + \mu + \Delta^{ad}\frac{\d}{\d u} + \Delta\frac{\d}{\d\phi}
\right)
%\\ \notag
%&
\longrightarrow
\left(\Bar(\kk[u]\otimes s^2\Gerst^{\dual}), 
\frac{\d}{\d{u}} \frac{\d}{\d{w}} + \mu \right).
\end{equation}
\end{corollary}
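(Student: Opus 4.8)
The plan is to read the left-hand side of~\eqref{eq::j_def} as the dg-operad $\P:=\Bar(\kk[u]\otimes s^2\Gerst^{\dual})$ with its own differential $d_{\P}:=\frac{\d}{\d u}\frac{\d}{\d w}+\mu$, to which we have freely adjoined the contractible resolution $(\kk[\Delta]\star\kk\langle\phi_1,\phi_2,\ldots\rangle,\Delta\frac{\d}{\d\phi})$ together with the coupling term $\Delta^{ad}\frac{\d}{\d u}$. The projection in the statement is exactly the augmentation that kills $\Delta$ and all the $\phi_i$, i.e.\ the operadic extension of the splitting~\eqref{eq::ES1::noncom}. First I would check that it is a chain map: the only summands of the differential that produce a $\Delta$ or a $\phi$ are $\Delta\frac{\d}{\d\phi}$ and $\Delta^{ad}\frac{\d}{\d u}$, and both land in the kernel of the projection, so the induced differential on the target is precisely $d_{\P}$. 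It then remains to compare homologies, and the entire content is that adjoining the acyclic $(\Delta,\phi)$-resolution, twisted by $\Delta^{ad}\frac{\d}{\d u}$, does not change the quasi-isomorphism type. This is what I would borrow from the proof of Proposition~\ref{lem:adjunction}.

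To render the twist harmless I would filter by the single grading $s:=\mathrm{wt}_\phi+\#\Delta$, where $\mathrm{wt}_\phi(\phi_{i_1}\cdots\phi_{i_k})=i_1+\cdots+i_k$ and $\#\Delta$ counts the number of $\Delta$'s. A direct inspection of~\eqref{eq:d-a} shows that $\Delta\frac{\d}{\d\phi}$ preserves $s$, since each $\phi_i$ is sent to a sum of terms of $s$-degree $i$, while $d_{\P}$ involves neither $\phi$ nor $\Delta$ and hence also preserves $s$; only $\Delta^{ad}\frac{\d}{\d u}$ raises $s$ by exactly one, because it removes a $u$ from the $\P$-part and inserts a single $\Delta$. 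Thus in the associated spectral sequence the $E_0$-differential is $d_{\P}+\Delta\frac{\d}{\d\phi}$ and $\Delta^{ad}\frac{\d}{\d u}$ is pushed to $d_1$. On $E_1$ the two summands $d_{\P}$ and $\Delta\frac{\d}{\d\phi}$ act on disjoint sets of generators, so the acyclicity of $(\kk[\Delta]\star\kk\langle\phi_1,\phi_2,\ldots\rangle,\Delta\frac{\d}{\d\phi})$ established in Proposition~\ref{lem:adjunction} contracts the whole $(\Delta,\phi)$-part onto $\kk$ and yields $E_1\cong H(\P,d_{\P})$, concentrated in $s=0$. Since $d_1$ and all higher differentials raise $s$, they vanish on a page concentrated in $s=0$; the sequence degenerates, and the projection, being the identity on the $s=0$ part, induces the identity on $E_1$. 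By the comparison theorem it is then a quasi-isomorphism.

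The step I expect to require the most care is the contraction of the $(\Delta,\phi)$-resolution inside a free product of operads rather than inside a plain tensor product: because $\Delta$ and the $\phi_i$ are unary, they decorate the edges and unary chains of the tree monomials transversally to the bar cogenerators of $\P$, and one must argue that taking $\Delta\frac{\d}{\d\phi}$-homology collapses each such chain to a scalar, leaving exactly the tree monomials built from $\P$-cogenerators. This is the operadic incarnation of the splitting~\eqref{eq::ES1::noncom} and is the precise sense in which the corollary follows from the proof of Proposition~\ref{lem:adjunction}. The only other point I would verify is convergence: for fixed arity and homological degree the grading $s$ is bounded, since the negative contributions $\deg\phi_i=-2i$ force an upper bound on $\mathrm{wt}_\phi$ once the total degree is fixed, so the $s$-filtration is finite in each component and the spectral sequence converges.
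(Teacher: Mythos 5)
Your overall strategy --- contracting the acyclic $(\Delta,\phi)$-factor and treating $\Delta^{ad}\frac{\d}{\d u}$ as a filtered perturbation --- is exactly what the paper's one-line reference to Proposition~\ref{lem:adjunction} leaves implicit, and your chain-map check and the identification of the associated graded homology with $H(\P,d_{\P})$ are fine. The gap is in the convergence step. The claim that $s=\mathrm{wt}_\phi+\#\Delta$ is bounded on each component of fixed arity and homological degree is false: the vertices of $\Bar(\kk[u]\otimes s^2\Gerst^{\dual})$ carry arbitrary powers of $u$, and $\deg u=+2$ exactly compensates $\deg\phi_1=-2$. Already in arity $2$ the elements $\phi_1^{k}\circ\bigl(\tfrac{u^{k}}{k!}w_{12}\bigr)$ have the same total degree for every $k$ but $s=k$. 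Hence the $s$-filtration is unbounded in each bidegree, the complex is only a direct sum (not a product) over $s$, and concentration of $E_1$ in $s=0$ does not by itself give degeneration plus convergence: for half-plane double complexes with exact columns the direct-sum total complex can carry homology invisible on $E_1$ (the standard staircase examples), and concretely the recursions $d_0y_{p+1}=-d_1y_p$ that produce representatives and null-homotopies are not guaranteed to terminate.

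The fix uses a grading you have not exploited: the total $u$-degree $t=\sum_v k_v$. It is nonnegative, $\mu$ and $\Delta\frac{\d}{\d\phi}$ preserve it, and both $\frac{\d}{\d u}\frac{\d}{\d w}$ and the coupling term $\Delta^{ad}\frac{\d}{\d u}$ strictly decrease it. So either filter by $t$ (an exhaustive increasing filtration of subcomplexes bounded below by $0$, whose spectral sequence converges unconditionally), or keep your $s$-filtration and observe that the perturbation $\Delta^{ad}\frac{\d}{\d u}$ is locally nilpotent because it lowers $t$, while the contracting homotopy of the $(\Delta,\phi)$-factor can be chosen to preserve $t$; then every perturbation series terminates after at most $t+1$ steps on any given element. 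This local nilpotency is precisely what makes the paper's explicit inverse $\Phi^{ad}\bigl(\frac{\d}{\d u}\bigr)\nu_n$ of Lemma~\ref{lem::inv::j} a finite expression.
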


The operad $\Bar(\kk[u]\otimes s^2\Gerst^{\dual})\star \kk[\Delta]$ 
will be referred to as an \emph{equivariant cobar complex}.
This operad is spanned by trees whose vertices are marked by elements of 
the cooperad $\kk[u]\otimes s^2\Gerst^{\dual}$ and some edges are marked by $\Delta$.

\subsection{BV and semi-direct composition of operads}
\label{sec::BV=Gerst*Delta}
In this section we recall the presentation of the $\BV$ operad in terms of the semi-direct composition.
The topological definition of the semi-direct composition of a group and an operad is given in~\cite{Salvatore_fE_d}.
In our case, the group is $S^1$, and the algebraic counterpart consists of the semi-direct composition 
of the Gerstenhaber operad with a free algebra $\kk[\Delta]$ generated by a unique generator $\Delta$ of degree $-1$.
As we have already mentioned in Section~\ref{sec::grav}, the circle acts by inner rotations 
of the disc and the corresponding coaction is given by the operator $\frac{\d}{\d w}$ defined by Equation~\rf{eq::Delta_act_gerst}.

We have already mentioned in Section~\ref{sec::BV::framed_little_discs}
 that the operad $\Gerst\ltimes\kk[\Delta]$ coincides with $\BV$. 
Let us specify a bit the description of $\Gerst\ltimes\kk[\Delta]$.
The space of $n$-ary operations of $\Gerst\ltimes\kk[\Delta](n)$
 is equal to $\Gerst(n)\otimes\kk[\Delta_1,\ldots,\Delta_n]$. In particular, $\Gerst\ltimes\kk[\Delta](1)=\kk[\Delta]$.
By definition, for any $\gamma\in\Gerst(n)$ we have:
\begin{align*}
\gamma\circ_i \Delta & := \gamma\otimes \Delta_i; \\ \notag
\Delta\circ \gamma & := \sum_{i=1}^{n} \gamma\circ_i\Delta + \Delta(\gamma),
\end{align*}
where in the last summand we use the action of $\Delta$ on $\Gerst$.
These two formulas allow to extend unambiguously the operadic product on $\Gerst$ to an operadic product on $\Gerst\ltimes\kk[\Delta](n)$.
Moreover, the projection 
$\pi:\Bar(s^2\Gerst^{\dual})\twoheadrightarrow \Gerst$ from Equation~\eqref{eq::pi::BGerst->Gerst} is extended to  
a quasi-isomorphism of semi-direct compositions:
\begin{equation}
\label{eq::def_BGerst->BV}
\pi:
(\Bar(s^2\Gerst^{\dual})\ltimes \kk[\Delta], \mu)   \twoheadrightarrow (\Gerst\ltimes \kk[\Delta],0) = (\BV,0).
\end{equation}

\begin{lemma}
\label{lem::uDelta->semiDelta}
 The natural projection
\begin{equation*}
\epsilon\colon \left( \Bar (\kk[u]\otimes s^2\Gerst^\dual)\star\kk[\Delta],
\frac{\d}{\d{u}} \frac{\d}{\d{ w}} +  \mu + \Delta^{ad}\frac{\d}{\d u}\right)
 \twoheadrightarrow \left(\Bar(s^2\Gerst^\dual)\ltimes \kk[\Delta],  \mu\right)
\end{equation*}
that sends $u\mapsto 0$, $\Delta\mapsto \Delta$, and $\Gerst\to \Gerst$, is a quasi-isomorphism of dg-operads.
\end{lemma}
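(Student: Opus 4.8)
The plan is to show that $\epsilon$ is a quasi-isomorphism by means of the spectral sequence associated with the (bounded) filtration by the number of vertices marked by $\kk[u]\otimes s^2\Gerst^\dual$ of the trees spanning the equivariant cobar complex. Since such a generator lives in arity $\ge 2$, a tree with $n$ leaves has at most $n-1$ vertices, so in each arity the filtration is bounded and the spectral sequence converges. The cocomposition $\mu$ raises the number of vertices by one, whereas the two remaining parts $\frac{\d}{\d u}\frac{\d}{\d w}$ and $\Delta^{ad}\frac{\d}{\d u}$ preserve it; hence on the associated graded ($E_0$-page) the differential is $D_0:=\frac{\d}{\d u}\frac{\d}{\d w}+\Delta^{ad}\frac{\d}{\d u}$, acting tree by tree, and the induced $d_1$ is $\mu$. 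On the target the same filtration gives the zero $E_0$-differential and $d_1=\mu$ as well. Because $\epsilon$ sends $u\mapsto 0$ and does not change the number of $s^2\Gerst^\dual$-vertices, it respects the filtration and induces a morphism of spectral sequences; it thus suffices to prove that $\epsilon$ is an isomorphism on $E_1$.

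Fix a tree $T$ with vertices $v_1,\dots,v_m$ of arities $n_1,\dots,n_m$. Its $E_0$-complex is
$$
W_T=\Bigl(\bigotimes_{j}\kk[u_j]\otimes s^2\Gerst^\dual(n_j)\Bigr)\otimes\Lambda\bigl[\delta_e : e\in\mathrm{Edges}(T)\bigr],
$$
where $u_j$ records the power of $u$ at $v_j$, the exterior generator $\delta_e$ records the presence of a $\Delta$ on the edge $e$ (so $\delta_e^2=0$ encodes $\Delta^2=0$), and the differential is $D_0=\sum_j \frac{\d}{\d u_j}\bigl(\frac{\d}{\d w_j}+\ell_j\wedge\bigr)$. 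Here $\frac{\d}{\d w_j}$ is the Orlik--Solomon differential on the $j$-th factor and $\ell_j=\delta_{\mathrm{out}(j)}-\sum_i \delta_{\mathrm{in}_i(j)}$ is the signed incidence of $v_j$, the two signs being exactly those in the definition of $\Delta^{ad}\frac{\d}{\d u}$. A direct check shows that the operators $M_j:=\frac{\d}{\d w_j}+\ell_j\wedge$ are square-zero and pairwise anticommuting, so $D_0$ is the Koszul differential of the family $\{M_j\}$ against the polynomial variables $\{u_j\}$.

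The heart of the argument is the computation of $H(W_T,D_0)$. First I would record the single-vertex acyclicity $H\bigl(s^2\Gerst^\dual(n)\otimes\Lambda[\delta_0,\dots,\delta_n],\ \tfrac{\d}{\d w}+\ell\wedge\bigr)=0$, which holds because $\ell\neq 0$ makes wedging with $\ell$ acyclic on the exterior factor, the Orlik--Solomon part being dragged along by a $w$-degree spectral sequence. Since the $m$ incidence forms $\ell_j$ are linearly independent (the vertex--edge incidence matrix of a tree, with external legs, has full vertex-rank), this acyclicity propagates through an iterated Koszul reduction performed on the vertices of $T$ from the leaves towards the root: reducing $v_j$ kills $u_j$ and eliminates the output generator $\delta_{\mathrm{out}(j)}$ in favour of the input generators $\sum_i\delta_{\mathrm{in}_i(j)}$ together with the term $\frac{\d}{\d w_j}$. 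This is precisely the semidirect relation $\Delta\circ\gamma=\sum_i\gamma\circ_i\Delta+\Delta(\gamma)$, with $\Delta(\gamma)$ supplied by $\frac{\d}{\d w_j}$. Consequently
$$
H(W_T,D_0)\cong\Bigl(\bigotimes_j s^2\Gerst^\dual(n_j)\Bigr)\otimes\kk[\Delta_1,\dots,\Delta_n]
$$
(with $u$ set to $0$), the surviving exterior generators being attached exactly to the $n$ leaves of $T$. Assembling over all $T$, this is the arity-$n$ part of $\Bar(s^2\Gerst^\dual)\ltimes\kk[\Delta]$, and the isomorphism is realised by $\epsilon$ itself; hence $\epsilon$ is an isomorphism on $E_1$ and, by convergence, a quasi-isomorphism.

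I expect the main obstacle to be the verification that this iterated reduction is consistent, i.e.\ that the family $\{M_j\}$ is Koszul-regular so that no higher differentials contribute, and that the bookkeeping of signs and edge identifications reproduces the semidirect composition exactly. In particular one must see that one recovers the full $s^2\Gerst^\dual$ rather than its $\Grav^\dual$-quotient, the difference being accounted for precisely by the output-edge $\Delta$'s coming from the free product with $\kk[\Delta]$, which is why this map remembers $\BV$ rather than the gravity operad. An equivalent and perhaps cleaner route, which I would keep in reserve, is to check directly that $\epsilon$ is surjective with acyclic kernel by exhibiting the corresponding Koszul contracting homotopy in the $u$- and edge-$\Delta$ variables.
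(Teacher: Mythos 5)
Your proposal is correct in substance and follows the same overall strategy as the paper: both arguments start from the (bounded) filtration by the number of $s^2\Gerst^\dual$-vertices (equivalently, internal edges), identify the associated graded differential in the source as $\frac{\d}{\d u}\frac{\d}{\d w}+\Delta^{ad}\frac{\d}{\d u}$ and in the target as $0$, and then reduce to a vertex-local computation. Where you diverge is in how that local computation is carried out. The paper introduces a \emph{further} filtration (left unspecified, "a sequence of filtrations") under which the associated graded differential collapses to $\Delta^{out}\frac{\d}{\d u}$, i.e.\ only the term pairing the $u$-variable of a vertex with the $\Delta$-slot on its \emph{own output edge} survives; since distinct vertices have distinct output edges, the complex then factors into independent copies of $\bigl(\kk[u]\otimes\kk[\Delta],\Delta\frac{\d}{\d u}\bigr)$, each with cohomology $\kk$, leaving exactly the $\Delta$'s on the global inputs. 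You instead attack the full associated graded differential as an iterated Koszul complex with operators $M_j=\frac{\d}{\d w_j}+\ell_j\wedge$; the linear independence of the incidence forms $\ell_j$ that you invoke is precisely the fact (each vertex owns its output edge) that the paper's second filtration exploits implicitly, and the regularity/higher-differential issue you flag as the main obstacle is exactly what that extra filtration is designed to dispose of. Your worry is resolvable: the cohomology of $\kk[u_j]\otimes C$ with respect to $\frac{\d}{\d u_j}M_j$ is concentrated in $u_j$-degree zero whenever $(C,M_j)$ is acyclic, which forces the degeneration of the iterated reduction, and acyclicity at each stage follows from a change of basis in the exterior algebra putting $\ell_1,\dots,\ell_m$ among the generators together with the $w$-degree filtration you already use. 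So your route is complete modulo that verification, and it has the merit of making visible \emph{why} the answer is the semidirect product: the relation $\delta_{\mathrm{out}(j)}\equiv\sum_i\delta_{\mathrm{in}_i(j)}-\frac{\d}{\d w_j}$ in the quotient is literally the dual of $\Delta\circ\gamma=\sum_i\gamma\circ_i\Delta+\Delta(\gamma)$, a point the paper leaves implicit. (One small omission on your side: the paper first checks that $\epsilon$ is a chain map compatible with the operadic structure before filtering; this is routine but should be stated.)
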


\begin{proof}
First, we check that $\epsilon$ is a morphism of dg-operad. 
Indeed, a direct computation follows that $\epsilon$ is compatible with the differentials.
Since the cobar complexes are free operads, we immediately get the compatibility with the operadic structures.

Then we consider a filtration by the number of internal edges in cobar complexes both in the source and in the target of $\epsilon$.
The associated graded differential in the target is equal to $0$, and the associated graded differential in the source dg-operad is equal to 
$\frac{\d}{\d{u}} \frac{\d}{\d{ w}} + \Delta^{ad}\frac{\d}{\d u}$.

At that point it is possible to choose a filtration (or rather a sequence of filtrations) in the source dg-operad such that associated graded differential will simplifies further and is equal to is $\Delta^{out}\frac{\d}{\d u}$. Here $\Delta^{out}$ is an operator defined by $\Delta^{out}(\gamma)=\Delta\circ \gamma$, that is, we create a new $\Delta$ only at the output of a vertex.

The cohomology of the complex $(\kk[\Delta]\otimes\kk[u],\Delta\frac{\d}{\d {u}})$ is equal to $\kk$. Therefore,
the cohomology with respect to the differential $\Delta^{out}\frac{\d}{\d u}$ are generated by the graphs whose vertices are decorated by $u^0$ and there are no $\Delta$'s on the outputs of the vertices. This means that the whole graph is allowed to have only some $\Delta$'s at the global inputs of the graph. This kind of graphs span by definition the semi-direct composition $\Bar(s^2\Gerst^{\dual})\ltimes \kk[\Delta]$.
\end{proof}

%%%%%%%%%%%%%%%%%%%
%%%%%%%%%%%%%%%%%%%
%%%%%%%%%%%%%%%%%%%

\section{Main diagram of quasi-isomorphisms}
\label{sec::main_diagram}
In this section we present the full diagram of quasi-isomorphisms that connects $\Hycomm$ and $\BV/\Delta$.
We show how $\psi$-classes appears in the picture and how one can get an algebraic model of the Kimura-Stasheff-Voronov operad.
 
In the forthcoming Section~\ref{sec:diagrammatic} we are going to move through this diagram the generators $\mm_k$ of $\Hycomm$, 
and this way we obtain a quasi-isomorphism $\theta:\Hycomm\to\BV/\Delta$.

\begin{theorem}
\label{thm::diag:Hycom->BV} We have the following sequence of quasi-isomorphisms:
\begin{equation}
\xymatrix{
\left(\Bar(\kk[u]\otimes s^2\Gerst^{\dual}),\right. 
\left.\frac{\d}{\d{u}}\frac{\d}{\d w} +  \mu\right)
\ar@{->}[d]^{\varepsilon}
& &
*{\begin{array}{l}
\left(\Bar(\kk[u]\otimes s^2\Gerst^{\dual})\star \kk[\Delta]/\Delta,
\right.\\
{\ } \quad \left.\frac{\d}{\d{u}} \frac{\d}{\d{ w}} +  \mu + \Delta^{ad}\frac{\d}{\d u} + \Delta\frac{\d}{\d\phi}\right)
\end{array}}
\ar@{->}[ll]_-{j}
\ar@{->}[d]^{\epsilon}
\\
\left(\Bar(\Grav^{\dual}),\mu^{\Grav}\right)
\ar@{->}[d]^{\kappa}
& &
\left(
\frac{\Bar(s^2 \Gerst^{\dual})\ltimes \kk[\Delta]}{\Delta},
 \mu + \Delta\frac{\d}{\d\phi}
\right)
\ar@{->}[d]^{\pi}
\\
(\Hycomm,0)
\ar@{..>}[rr]^{\theta} & &
\left(\BV/\Delta, \Delta\frac{\d}{\d\phi} \right) 
}
\label{eq::diag::Hycom_BV}
\end{equation}
\end{theorem}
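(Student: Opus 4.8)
The plan is to verify that each of the five arrows in Diagram~\eqref{eq::diag::Hycom_BV} is a quasi-isomorphism, treating the four solid arrows first and then producing the dotted arrow $\theta$ as the composite that the other four force. Four of the solid arrows have already been established earlier in the excerpt: the maps $j$ and $\epsilon$ on the right column are exactly the quasi-isomorphisms of Corollary~\ref{lem::delta/delta} and Lemma~\ref{lem::uDelta->semiDelta}, while the composite $\varepsilon$ followed by $\kappa$ down the left column is the resolution~\eqref{eq::res_gerst_to_hycom} of $\Hycomm$ via the cobar complex of $\Grav$, whose factors $\varepsilon$ and $\kappa$ were introduced there. So the genuinely new verifications are that $\pi$ (bottom right) is a quasi-isomorphism and, crucially, that the diagram \emph{commutes} up to the definition of $\theta$.

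First I would treat the arrow $\pi$. The target $(\BV/\Delta,\Delta\frac{\d}{\d\phi})$ is by construction $(\Gerst\ltimes\kk[\Delta])/\Delta$ with the differential of~\eqref{eq:d-a}, and the source is the homotopy quotient of $\Bar(s^2\Gerst^{\dual})\ltimes\kk[\Delta]$ by $\Delta$. The point is that $\pi$ is induced by the quasi-isomorphism~\eqref{eq::def_BGerst->BV}, namely $\pi\colon(\Bar(s^2\Gerst^{\dual})\ltimes\kk[\Delta],\mu)\twoheadrightarrow(\BV,0)$, after applying the homotopy quotient functor $(\tt)/\Delta$ to both sides. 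Since that functor is the left adjoint of Proposition~\ref{lem:adjunction} and, as spelled out there and in Corollary~\ref{lem::delta/delta}, amounts to a free product with the acyclic $\kk\langle\phi_1,\phi_2,\ldots\rangle$, applying it preserves quasi-isomorphisms: the added $\phi$-generators contribute an acyclic tensor factor whose differential $\Delta\frac{\d}{\d\phi}$ is independent of the cobar differential $\mu$, so a spectral-sequence or filtration-by-$\phi$-degree argument reduces the claim to~\eqref{eq::def_BGerst->BV}. This is the most routine of the new steps.

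The main obstacle, and the real content of the theorem, is commutativity. Because the bottom-left object $(\Hycomm,0)$ has no generators in the $\phi$- or $u$-directions and zero differential, there is no direct map from it into the right column; the dotted $\theta$ can only be \emph{defined} as the zig-zag $\theta:=\pi\circ\epsilon\circ j^{-1}\circ(\varepsilon,\kappa)^{-1}$ in the homotopy category, and the claim of the diagram is precisely that this composite is well defined as an honest morphism $\Hycomm\to\BV/\Delta$ of dg-operads. The delicate part is that $j$ and the composite $\kappa\circ\varepsilon$ point \emph{into} the two copies of $\Bar(\kk[u]\otimes s^2\Gerst^{\dual})$-type complexes from opposite corners, so realizing $\theta$ requires lifting the generators $\mm_k\in\Hycomm(k)$ to explicit cocycles in the equivariant cobar complex $\Bar(\kk[u]\otimes s^2\Gerst^{\dual})\star\kk[\Delta]/\Delta$ and then pushing them down through $\epsilon$ and $\pi$. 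I would not attempt to write these lifts here; rather, the honest statement of the theorem is just the commuting diagram of quasi-isomorphisms, and the explicit cocycle representatives — hence the closed formula for $\theta$ matching Theorem~\ref{thm::formula_BV-Hycomm} — are exactly what the diagram chase of Section~\ref{sec:diagrammatic} produces. Thus my plan is to prove the five arrows are quasi-isomorphisms (four by citation, $\pi$ by the adjoint-functor argument above) and to \emph{defer} the diagram chase that extracts $\theta$ to the subsequent section, observing only that existence and uniqueness of the lift follow formally from the acyclicity established in the Lemma after~\eqref{eq::eta_Q} together with the degree count at the end of Section~\ref{sec:quasi}.
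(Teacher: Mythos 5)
Your proposal is correct and follows essentially the same route as the paper: the paper's own proof of Theorem~\ref{thm::diag:Hycom->BV} is likewise a list of citations ($\kappa$ from Koszul duality of $\Hycomm$ and $\Grav$, $\varepsilon$ from Lemma~\ref{lem::gerst->grav} via the cobar functor, $j$ from Corollary~\ref{lem::delta/delta}, $\epsilon$ from Lemma~\ref{lem::uDelta->semiDelta}, and $\pi$ as the homotopy quotient of the quasi-isomorphism~\eqref{eq::def_BGerst->BV}), with the construction of $\theta$ and the commutativity of the diagram deferred to the clockwise diagram chase of Section~\ref{sec:diagrammatic}. Your filtration-by-$\phi$-degree justification that the homotopy quotient functor preserves quasi-isomorphisms is exactly the acyclicity argument the paper uses for $j$ and implicitly for $\pi$.
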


\begin{proof}
In Section~\ref{sec::operad::definitions} we give
a detailed description of all the morphisms involved in Diagram~\eqref{eq::diag::Hycom_BV} 
and prove that they are quasi-isomorphisms, except for $\theta$. Indeed,
\begin{itemize}
 \item[$\kappa$:]
The morphism $\kappa$ is a quasi-isomorphism because the operads $\Hycomm$ and $\Grav$ are Koszul dual to each other, 
see Section~\ref{sec::open:moduli}.
\item[$\varepsilon$:] 
The equivariant model of the operad $\Grav$ is 
discussed in Section~\ref{sec::grav}. We apply the cobar functor 
to the quasi-isomorphism $\varepsilon:\left(\kk[u]\otimes \Gerst^{\dual},\frac{\d}{\d u}\frac{\d}{\d  w}\right)\rightarrow \Grav^{\dual}$ 
described in Lemma~\ref{lem::gerst->grav}.
\item[$j$:] 
The morphism $j$ is a special case of the composition of the free product functor and the homotopy quotient functor discussed 
in Section~\ref{sec::homotopy_quotients_def}, see Corollary~\ref{lem::delta/delta}.
\item[$\epsilon$:] The existence of $\epsilon$ is discussed in Section~\ref{sec::BV=Gerst*Delta}.
The quasi-isomorphism property of $\epsilon$ is proved in Lemma~\ref{lem::uDelta->semiDelta} via a sequence of filtrations.
\item[$\pi$:] The map $\pi$ is obtained as a homotopy quotient of the quasi-isomorphism given by Equation~\eqref{eq::def_BGerst->BV}.
 The latter one is obtained from the standard Koszul resolution of $\Gerst$ 
(see Equations~\eqref{eq::BGerst->Gerst},\eqref{eq::pi::BGerst->Gerst}).
\item[$\theta:$] Section~\ref{sec:diagrammatic} contains a careful description of $\theta$
together with the proof of quasi-iso and commutativity of the diagram.
We take the generators of $\Hycomm$ and move the corresponding cocycles  through the diagram above in a clockwise direction.
We will show that the resulting map of generators from $\Hycomm$ to $\BV/\Delta$ defines a morphism of operads and
does not depend on particular choices of cocycles one should made in-between.
In particular, the image of the map $\theta$  coincides with 
the intersection of the kernel of differential $\Delta\frac{\d}{\d\phi}$  
with the suboperad of $\BV/\Delta$ generated by multiplication and $\phi_i$'s. 
\end{itemize}
\end{proof}

Recall from Section~\ref{sec::chern} that any given $S^1$-operad $\Q$ and a pair of natural numbers  $i<n$ 
defines an $S^1$-fibration over $\Q/\Delta(n)$ associated with the $S^1$-rotations in the $i$-th slot.
We will apply this construction for the operad $\BV$ in order to have another description 
of the line bundles over the moduli space $\oM_{0,n+1}$ formed by the cotangent lines at the marked point.
Recall that $\psi$-classes are the first Chern classes of these line bundles.
 Theorem~\ref{thm::psi-classes} below explains the algebraic counterpart of the action of $\psi$-classes in 
Diagram~\eqref{eq::diag::Hycom_BV}.
\begin{theorem}
\label{thm::psi-classes}
The $S^1$-fibration over the space of $n$-ary operation of the homotopy quotient by $S^1$ of the framed little discs operad
associated to the rotations in the $i$'th  slot 
coincides with the $S^1$-bundle over $\oM_{0,n+1}$ coming from the line bundle of the cotangent lines at the $i$-th marked point.

The algebraic models of the evaluation of the first Chern class of $S^1$-bundles under consideration
are underlined in the following refinement of commutative Diagram~\eqref{eq::diag::Hycom_BV}:
\begin{equation}
\label{eq::diag::psi_clas}
\xymatrix{
\left(\Bar(\kk[u]\otimes s^2\Gerst^{\dual}),
\frac{\d}{\d{u}}\frac{\d}{\d w} +  \mu\right)
\ar@{->}[d]^{\kappa\circ\varepsilon}
& &
*{\begin{array}{l}
\left(\Bar(\kk[u]\otimes s^2\Gerst^{\dual})\star \kk[\Delta]/\Delta,
\right.\\
{\ } \quad \left.\frac{\d}{\d{u}} \frac{\d}{\d{ w}} +  \mu + \Delta^{ad}\frac{\d}{\d u} + \Delta\frac{\d}{\d\phi}\right)
\end{array}}
\ar@{->}[ll]_-{j}
\ar@{->}[d]^{\pi\circ\epsilon}
\\
(\Hycomm,0)
\ar@{..>}[rr]^{\theta} & &
\left(\BV/\Delta, \Delta\frac{\d}{\d\phi} \right)
{\ar_{{\circ_i\frac{\d}{\d\phi_1}}}@/_1pc/(70,-23)*{};(78,-23)*{}}
{\ar_{{\psi_i}}@/_1pc/(-4,-23)*{};(4,-23)*{}}
{\ar^{{\circ_i\frac{\d}{\d u} + \circ_i\frac{\d}{\d \phi_1}}}@/^1pc/(70,8)*{};(78,8)*{}}
{\ar^{{\circ_i\frac{\d}{\d u}}}@/^1pc/(-4,5)*{};(4,5)*{}}
}
\end{equation}
Each operator drawn as a loop near the appropriate complex defines an operator which commutes with differential in this complex
and the vertical and horizontal arrows map these derivations one to another.
For example, the 
derivation $\circ_i\frac{\d}{\d u}$ is the differentiation by $u$-variable 
in the vertex attached to the $i$-th slot (input/output) 
of the element in the cobar complex $\Bar(\kk[u]\otimes s^2\Gerst^{\dual})$,
and
the differentiation $\circ_i\frac{\d}{\d \phi_1}$ means the noncommutative differentiation by $\phi_1$ in the algebra
$\kk\langle\phi_1,\phi_2,\ldots\rangle$ which is also attached to the $i$-th slot.
\end{theorem}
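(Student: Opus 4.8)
The plan is to separate the geometric identification of the $S^1$-fibrations from the purely algebraic bookkeeping of their first Chern classes, and then to glue the two together through Diagram~\eqref{eq::diag::Hycom_BV}. For the first (topological) assertion I would invoke Statement~\ref{stat::S1->BV->Grav} together with the weak equivalence between the homotopy quotient of the framed little discs operad by the output circle and the operad of spaces $\oM_{0,\bullet}$ recalled in the introduction: under this equivalence the residual rotation in the $i$-th slot acts precisely on the cotangent line $\mathbb{L}_i$ at the $i$-th marked point, so that the associated $S^1$-fibration is the unit-circle bundle of $\mathbb{L}_i$ and its first Chern class is $\psi_i$. The second (algebraic) assertion is then that each of the four underlined operators is a chain-level representative of the evaluation of this Chern class on the corresponding model, and that the maps of Diagram~\eqref{eq::diag::psi_clas} intertwine them.

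First I would check that each underlined operator commutes with the differential of its complex. On $(\Hycomm,0)$ there is nothing to check. On $(\BV/\Delta,\Delta\frac{\d}{\d\phi})$ the fact that $\circ_i\frac{\d}{\d\phi_1}$ is a chain map is exactly the content of Proposition~\ref{prop::psi_algebraic}, which moreover identifies it with the first Chern class of the $i$-th slot fibration over $\BV/\Delta$. On the cobar complex $\Bar(\kk[u]\otimes s^2\Gerst^{\dual})$ the operator $\circ_i\frac{\d}{\d u}$ is a derivation in $u$ supported at the vertex carrying the $i$-th leaf; it commutes with the inner part $\frac{\d}{\d u}\frac{\d}{\d w}$ because $\frac{\d}{\d u}$ and $\frac{\d}{\d w}$ act in independent variables, and it commutes with the cocomposition $\mu$ because the $u$-variable propagates under $\mu$ through the divided-power comultiplication $\frac{u^k}{k!}\mapsto\sum_{k_1+k_2=k}\frac{u^{k_1}}{k_1!}\otimes\frac{u^{k_2}}{k_2!}$, for which differentiating and then comultiplying in the factor containing the $i$-th leaf agree. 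On the top-right mixed complex the operator $\circ_i\frac{\d}{\d u}+\circ_i\frac{\d}{\d\phi_1}$ must be checked against the four-term differential; the two extra terms $\Delta^{ad}\frac{\d}{\d u}$ and $\Delta\frac{\d}{\d\phi}$ are again handled by the commutation of $\frac{\d}{\d u}$, resp.\ $\frac{\d}{\d\phi_1}$, with the creation of a $\Delta$, so that the sum is a chain map.

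Next I would verify the intertwinings by reading off the explicit maps. Since $j$ sends $\Delta$ and all $\phi_i$ to $0$ while keeping $u$, the component $\circ_i\frac{\d}{\d\phi_1}$ dies on the image and $j$ carries $\circ_i\frac{\d}{\d u}+\circ_i\frac{\d}{\d\phi_1}$ to $\circ_i\frac{\d}{\d u}$; dually, $\pi\circ\epsilon$ sends $u\mapsto 0$ while keeping the $\phi$'s, so it carries the same operator to $\circ_i\frac{\d}{\d\phi_1}$. The one nontrivial intertwining is $(\kappa\circ\varepsilon)_*(\circ_i\frac{\d}{\d u})=\psi_i$: here $u$ is the multiplicative generator of $H^{\udot}(BS^1)$ in the equivariant model of $\Grav$, and $\frac{\d}{\d u}$ is the evaluation of the first Chern class by the very same argument as in Proposition~\ref{prop::psi_algebraic} (cf.\ the quasi-isomorphism $ab$ of~\eqref{eq::phi_i->u}); combined with the topological identification of the first paragraph this is exactly $\psi_i$. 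Having established that each operator is a chain map and that the three sides $j$, $\pi\circ\epsilon$, $\kappa\circ\varepsilon$ intertwine the indicated operators, the remaining fourth side—that $\theta$ sends $\psi_i$ to $\circ_i\frac{\d}{\d\phi_1}$ on cohomology—follows by a diagram chase, since all arrows are quasi-isomorphisms and Diagram~\eqref{eq::diag::Hycom_BV} commutes (Theorem~\ref{thm::diag:Hycom->BV}); this is consistent with $\theta$ being a quasi-isomorphism by Theorem~\ref{thm::formula_BV-Hycomm}.

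I expect the main obstacle to be the compatibility of $\circ_i\frac{\d}{\d u}$ with the cocomposition differential $\mu$ on the cobar complex, where one must track carefully which of the two vertices produced by splitting a vertex inherits the $i$-th leaf and verify that the divided-power bookkeeping of $u$ is respected with the correct Koszul signs; and, conceptually, the identification $(\kappa\circ\varepsilon)_*(\circ_i\frac{\d}{\d u})=\psi_i$, which is the place where the geometric meaning of $u$ as the equivariant parameter and of $\frac{\d}{\d u}$ as Chern-class evaluation enters and ties the algebra back to the $\psi$-classes on $\oM_{0,n+1}$.
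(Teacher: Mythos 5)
Your proposal follows essentially the same route as the paper, which itself only sketches the argument: a direct check that the underlined derivations are chain maps intertwined by $j$ and $\pi\circ\epsilon$, the identification of these derivations with Chern-class evaluation via Proposition~\ref{prop::psi_algebraic}, and Getzler's Statement~\ref{stat::S1->BV->Grav} to handle the leftmost arrow $(\kappa\circ\varepsilon)_*(\circ_i\frac{\d}{\d u})=\psi_i$, which you correctly single out as the one place where genuine geometric input is needed. Your version simply fills in the commutation checks that the paper leaves to the reader, and does so correctly.
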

\begin{proof}
We omit the detailed proof of this Theorem because
the proof repeats the one of Theorem~\ref{thm::diag:Hycom->BV} 
and is based on the results of Getzler mentioned in Statement~\ref{stat::S1->BV->Grav}.
It is a direct check that the diagram commutes everywhere except the leftmost arrow.
From Proposition~\ref{prop::psi_algebraic} we know that the corresponding derivations drawn in the loops
represents the evaluation map with the first Chern class on the homology level.
Statement~\ref{stat::S1->BV->Grav} finishes the coincidence of the corresponding bundles and Chern classes respectively.
\end{proof}

Recall that for any operad $\Q$ with a chosen $S^1$-action we construct a functorial quasi-iso projection
$\eta_{\Q}: \left(\left(\Q /\Delta\right) \star \kk[\Delta], (\Delta-\Delta_{\Q})\frac{\d}{\d \phi}\right) \mapsto \Q$
(Compare with Equation~\eqref{eq::eta_Q}). 
We want to apply the functor $\eta_{\Q}\circ (\tt\star\kk[\Delta])$ to 
the main Diagram~\eqref{eq::diag::Hycom_BV}.
This operation is well defined because all operads involved in Diagram~\eqref{eq::diag::Hycom_BV}
are quasi-iso to the image of the functor of homotopy quotient by $\Delta$.
Moreover the composition of functors $\eta_{\Q}\circ (\tt\star\kk[\Delta])$ applied 
to the second column of Diagram~\eqref{eq::diag::Hycom_BV}
just removes the homotopy quotient.
On the other hand we show how this functor affects the differential 
if we apply the same functor to the left column of Diagram~\eqref{eq::diag::Hycom_BV}.
Indeed we have the following dg-model for the $\BV$-operad 
(the image of $\eta_{\Q}\circ (\tt\star\kk[\Delta])$ to the left-top operad from Diagram~\eqref{eq::diag::Hycom_BV}):
$$\left(\Bar(\kk[u]\otimes s^2\Gerst^{\dual})\star \kk[\Delta],
\frac{\d}{\d{u}}\frac{\d}{\d w} +  \mu + \Delta^{ad}\frac{\d}{\d u}\right).$$
Theorem~\ref{thm::psi-classes} says that the differential in the bottom of the column should replace 
the operator $\frac{\d}{\d u}$ by the evaluation of the corresponding $\psi$-class.
I.~e. the image of the bottom complex is $(\Hycomm\star\kk[\Delta], \Delta\psi)$ where the differential ``$\Delta\psi$''
is defined on the generators by the following formula:
\begin{equation*}
(\Delta\psi) \cdot \mm_n =  \sum_{i=0}^{n} (\psi_i \mm_n)\circ_i \Delta - 
\sum_{S_1\sqcup S_2= \{0,..,n\}} \mm_{|S_1|+1} \circ_{*} \Delta\circ_{*} \mm_{1+|S_2|}
\end{equation*}
The formulas have the same form whenever one uses the $\psi$-classes description of the $\Hycomm$-operad:
\begin{align*}
(\Delta\psi) \cdot 
\psi_0^{d_0}\ldots \psi_{i}^{d_i}\ldots\psi_n^{d_n} [\oM_{0,n+1}] = &
\sum_{i=0}^{n}  \psi_i \prod_{s=0}^{n} \psi_s^{d_s} [\oM_{0,n+1}]\circ_i\Delta  +
\\
& - \sum_{S_1\sqcup S_2= \{0,\ldots,n\}}  \prod_{s\in S_1} \psi_{s}^{d_s} [\oM_{0,|S_1|+1}] \otimes \Delta
\otimes \prod_{s\in S_2} \psi_{s}^{d_s} [\oM_{0,|S_2|+1}] 
\end{align*}

We finally ends up with the following corollary which seems to be quite useful in order to have a description of the 
Quillen homology and minimal resolution of $\BV$-operad:
\begin{corollary}
 There exists a commutative diagram of quasi-isomorphisms of operads:
$$
\xymatrix{
*{\begin{array}{l}
\left(\Bar(\kk[u]\otimes s^2\Gerst^{\dual})\star \kk[\Delta],\right. \\
{\ } \quad \left.\frac{\d}{\d{u}}\frac{\d}{\d w} +  \mu + \Delta^{ad}\frac{\d}{\d u}\right)
\end{array}}
\ar@{->}[d]^{\kappa\circ \varepsilon}
& &
\left(\Bar(\kk[u]\oplus \kk[u]\otimes s^2\Gerst^{\dual}),
\frac{\d}{\d{u}} \frac{\d}{\d{ w}} + \mu \right)
\ar@{->}[ll]_-{j}
\ar@{->}[d]^{\pi\circ\epsilon}
\\
%% 3-d line
(\Hycomm\star\kk[\Delta],\Delta\psi)
\ar@{..>}[rr]^{\theta} & &
(\BV,0) 
}
$$
\end{corollary}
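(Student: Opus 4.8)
The plan is to obtain this diagram as the image of the already-established Diagram~\eqref{eq::diag::Hycom_BV} under the functor $F:=\eta_{\Q}\circ(\tt\star\kk[\Delta])$ discussed above. Since the free product $(\tt\star\kk[\Delta])$ is functorial and the projection $\eta_{\Q}$ of Equation~\eqref{eq::eta_Q} is a functorial quasi-isomorphism, $F$ carries a commutative square of quasi-isomorphisms to a commutative square of quasi-isomorphisms. Thus the only genuine work is to identify the four corners and the four maps of $F$ applied to Diagram~\eqref{eq::diag::Hycom_BV}; commutativity and the quasi-iso property are then formal. First I would record well-definedness: $F$ may legitimately be applied because every operad in Diagram~\eqref{eq::diag::Hycom_BV} is quasi-isomorphic to the image of the homotopy quotient functor by $\Delta$ (in particular $\Hycomm\simeq\BV/\Delta$ through $\theta$), so the adjunction of Proposition~\ref{lem:adjunction} is available throughout.

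For the right column I would argue that $F$ simply undoes the homotopy quotient: on $\BV/\Delta$ it recovers $\BV$ (the corner BR), and on the top-right corner it recovers $\Bar(\kk[u]\otimes s^2\Gerst^{\dual})\star\kk[\Delta]$ with the differential $\frac{\d}{\d u}\frac{\d}{\d w}+\mu+\Delta^{ad}\frac{\d}{\d u}$. The one nontrivial identification is to rewrite this free product as a single cobar complex $\Bar(\kk[u]\oplus\kk[u]\otimes s^2\Gerst^{\dual})$: this uses that $\kk[\Delta]$ is resolved by the arity-one cobar complex on $\kk[u]$ (the Koszul resolution underlying Proposition~\ref{lem:adjunction}), under which $\Bar(\kk[u]\otimes s^2\Gerst^{\dual})\star\Bar(\kk[u])\cong\Bar\bigl((\kk[u]\otimes s^2\Gerst^{\dual})\oplus\kk[u]\bigr)$, the arity-one summand $\kk[u]$ playing the role of $\kk[\Delta]$. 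The map $j$ of the corollary is then the image under $F$ of the map $j$ of Diagram~\eqref{eq::diag::Hycom_BV}, i.e.\ the quasi-iso comparing these two presentations, and $\pi\circ\epsilon$ is the image of the right column of Diagram~\eqref{eq::diag::Hycom_BV}.

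For the left column I would apply $(\tt\star\kk[\Delta])$ to the equivariant gravity model $\Bar(\kk[u]\otimes s^2\Gerst^{\dual})$, producing the corner TL together with the differential $\Delta^{ad}\frac{\d}{\d u}$, exactly as displayed before the corollary. The essential step, and the one I expect to be the main obstacle, is the bottom-left corner: computing what $F$ does to $\Hycomm$. Here $\Hycomm$ is only implicitly a homotopy quotient (through $\theta$), so there is no direct formula, and the identification must come from Theorem~\ref{thm::psi-classes}. That theorem identifies the transferred operator $\circ_i\frac{\d}{\d u}$ with the evaluation of the first Chern class, i.e.\ with the $\psi$-class $\psi_i$, under the vertical quasi-isomorphisms of Diagram~\eqref{eq::diag::psi_clas}. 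Consequently the differential $\Delta^{ad}\frac{\d}{\d u}$ is intertwined by $\kappa\circ\varepsilon$ with the operator $\Delta\psi$ on $\Hycomm\star\kk[\Delta]$, whose value on generators is precisely the formula written just before the corollary. This gives BL $=(\Hycomm\star\kk[\Delta],\Delta\psi)$, and the bottom arrow is the induced $\theta=F(\theta)$. Once Theorem~\ref{thm::psi-classes} supplies this $\psi$-class bookkeeping, commutativity of the square and the quasi-iso property of all four arrows follow formally from the functoriality of $F$ and from Diagram~\eqref{eq::diag::Hycom_BV}.
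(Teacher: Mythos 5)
Your proposal follows the paper's own argument essentially verbatim: the paper likewise obtains the corollary by applying the functor $\eta_{\Q}\circ(\tt\star\kk[\Delta])$ to Diagram~\eqref{eq::diag::Hycom_BV}, observing that on the right column this simply removes the homotopy quotient while on the left column Theorem~\ref{thm::psi-classes} converts the differential $\Delta^{ad}\frac{\d}{\d u}$ into the operator $\Delta\psi$ on $\Hycomm\star\kk[\Delta]$. The only point the paper leaves implicit --- rewriting the resulting free product as the single cobar complex $\Bar(\kk[u]\oplus\kk[u]\otimes s^2\Gerst^{\dual})$, where the arity-one summand $\kk[u]$ resolves $\kk[\Delta]$ and the mixed cocomposition accounts for the $\Delta^{ad}\frac{\d}{\d u}$-terms --- is exactly the detail you supply, and you supply it correctly.
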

Note that the operad $(\Hycomm\star \kk[\Delta], \Delta\psi)$ is an algebraic model of Kimura-Stasheff-Voronov 
operad (see e.g.\cite{Kimura} for details). 
Moreover, the map $\theta$ becomes an obviously defined projection that sends the operation $\mm_2\in\Hycomm(2)$ 
to the multiplication in $\BV$, 
$\Delta$ to $\Delta$ and all other generators $\mm_k$ for $k\geq 3$ of the operad $\Hycomm$ are mapped to $0$.

\section{Diagram chase}
\label{sec:diagrammatic}
This technical section consists of the precise description of the inverse maps that appear in Diagram~\eqref{eq::diag::Hycom_BV}.
The aim is to get precise formulas for the cocycles in this Diagram.
We move our cocycles through the Diagram~\eqref{eq::diag::Hycom_BV} step by step in the clockwise direction 
starting with the operad $\Hycomm$.

\subsection{The inverse of $\kappa$}
The generators of the cohomology of $\left(\Bar(\Grav^{\dual}),\mu^{\Grav}\right)$ that project under $\kappa$ to the cocycles $\mm_i$, $i=2,3,\dots$, are $\mmg_i$ described in Section~\ref{sec::open:moduli}, see Equation~\eqref{eq:mmg-first}.

\subsection{The inverse of $\varepsilon$}
\label{sec::inv::epsilon}
The complex $\left(\Bar(\kk[u]\otimes s^2\Gerst^{\dual}), \frac{\d}{\d{u}}\frac{\d}{\d w} +  \mu\right)$
has two differentials. The quasi-isomorphism $\varepsilon$ is the projection to the cohomology 
with respect to the differential $\frac{\d}{\d{u}}\frac{\d}{\d w}$.

Let us give the inductive procedure of writing an inverse map to $\varepsilon$. We will show how one can 
increase the number of inputs in order to write down a sequence of representing cocycles. The way we are doing 
that is not symmetric in the inputs; each cocycle will depend on the ordering of the inputs, but different 
orderings will give homologous cocycles. The map that increases the number of inputs is defined as a linear 
combination of some auxiliary maps that we introduce now.

Consider the natural embedding of the Orlik-Solomon algebras:
\begin{align*}
\iota_{I,n}\colon & \kk[u]\otimes \Gerst^{\dual}(I) \rightarrow 
\kk[u]\otimes \Gerst^{\dual}(I\sqcup\{n\}); 
\\ \notag
\iota_{I,n}\colon & w_{i j} \mapsto w_{i j}, \qquad 
\forall\ i,j\in I; 
\\ \notag
\iota_{I,n}\colon & u \mapsto u. 
\end{align*}
The meaning of this formula is the following. 
We just increase the number of inputs: the set of inputs $I$ is replaced by the set of inputs $I\sqcup \{n\}$.

We extend the map $\iota_{I,n}$ to a derivation of the bar-complex $\Bar(\kk[u]\otimes s^2\Gerst^{\dual})$. It is not well-defined for the operations of arity $\geq n$, because in this case it might appear that $I\ni n$. But we restrict the resulting map to the operations of arity $n-1$. We denote this extension by $\iota_{n}\colon \Bar(\kk[u]\otimes s^2\Gerst^{\dual})(n-1)\to \Bar(\kk[u]\otimes s^2\Gerst^{\dual})(n)$.

Now we define a collection of derivations $\varsigma_{s n}$,  $s=0,\ldots,n-1$, of the Bar-complex $\Bar(\kk[u]\otimes s^2\Gerst^{\dual})$. Again, this definition we need only in arity $(n-1)$, and it doesn't work in arity $\geq n$. The map $\varsigma_{s n}$ increases the set of inputs by the input $n$ in the same sense as $\iota_n$. Since $\varsigma_{s,n}$ is a derivation, it is enough to describe what happens when we apply it to a corolla $\gamma$. It produces a tree with one internal edge and two internal vertices.
One vertex coincides with the corolla $\gamma$ and the remaining vertex corresponds to a binary operation, that is, it has two inputs and one output.
There are two cases, $s=1,\dots,n-1$, and $s=0$. For $s=1$,\ldots,$n-1$ we have a map:
\begin{align*}
\varsigma_{s n}\colon & \kk[u]\otimes\Gerst^{\dual}(I)  \rightarrow 
\left( \kk[u]\otimes\Gerst^{\dual}(I\sqcup\{*\}\setminus \{s\})\right) \otimes 
\left( \kk[u]\otimes\Gerst^{\dual}(\{s,n\})\right); \\ \notag
 & \frac{u^{k}}{k!} f(w_{ij}) \mapsto \sum_{k_1+k_2=k} 
\frac{u^{k_1}}{k_1!} f(w_{ij})\otimes \frac{u^{k_2+1}}{(k_2+1)!} w_{s n}.
\end{align*}
Note that in the first factor on the right hand side we identify $w_{is}$ and $w_{i*}$ as it is prescribed by the cocomposition rules
defined in Equation~\eqref{eq::coordinates_in_BV}. For $s=0$ we have:
\begin{align*}
\varsigma_{0 n}\colon & \kk[u]\otimes\Gerst^{\dual}(I) \rightarrow  
\left( \kk[u]\otimes\Gerst^{\dual}(\{*,n\})\right)\otimes
\left( \kk[u]\otimes\Gerst^{\dual}(I)\right);  \\ \notag
 & \frac{u^{k}}{k!} f(w_{ij}) \mapsto - \sum_{k_1+k_2=k} 
 \frac{u^{k_2+1}}{(k_2+1)!} w_{* n} \otimes \frac{u^{k_1}}{k_1!} f(w_{ij}).
\end{align*}

\begin{lemma}
\label{lem::homotopy:Gerst->grav}
The map $\zeta_n:= \iota_n  + \sum_{s=0}^{n-1} \varsigma_{s n}$ is a chain map of homological degree $(-2)$ 
between the subcomplexes spanned by operations of arity $(n-1)$ and $n$:
\begin{equation*}
\zeta_n\colon \left(\Bar(\kk[u]\otimes s^2\Gerst^{\dual})(n-1),\frac{\d}{\d{u}}\frac{\d}{\d{w}} + \mu\right) 
\rightarrow
\left(\Bar(\kk[u]\otimes s^2\Gerst^{\dual})(n),\frac{\d}{\d{u}}\frac{\d}{\d{w}} + \mu\right)[-2] 
\end{equation*}
\end{lemma}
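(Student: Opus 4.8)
My plan is to exploit that every operator in sight is a derivation of the free operad $\Bar(\kk[u]\otimes s^2\Gerst^{\dual})$, and that a derivation is determined by its values on corollas. Both summands of the total differential, $\frac{\d}{\d u}\frac{\d}{\d w}$ and $\mu$, are derivations, and by construction so are $\iota_n$ and each $\varsigma_{sn}$; hence $\zeta_n$ is a derivation and the graded commutator
\[
\left[\,\frac{\d}{\d u}\frac{\d}{\d w}+\mu,\ \zeta_n\,\right]
\]
is again a derivation. Therefore it suffices to prove that this commutator vanishes on a single corolla $\gamma=\frac{u^k}{k!}f(w_{ij})$; the chain-map property on all trees of arity $n-1$ then follows by the Leibniz rule. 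One only has to remember that $\iota_n$ and $\varsigma_{sn}$ raise the arity and are defined in arity $\le n-1$, so in each Leibniz summand the new input $n$ is attached at exactly one vertex.

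First I would dispose of the internal differential against $\iota_n$. Since $\iota_n$ fixes $u$ and creates no $w$-variable carrying the new index $n$, while $f$ involves no $w_{in}$, the operator $\iota_n$ commutes with $\frac{\d}{\d u}\frac{\d}{\d w}=\frac{\d}{\d u}\sum_{i<j}\frac{\d}{\d w_{ij}}$: the extra derivatives $\frac{\d}{\d w_{in}}$ present in arity $n$ annihilate $f$, and the rest match literally. Thus $[\frac{\d}{\d u}\frac{\d}{\d w},\iota_n]=0$, and on $\gamma$ the identity to be checked collapses to
\[
\Big[\tfrac{\d}{\d u}\tfrac{\d}{\d w},\ \textstyle\sum_{s=0}^{n-1}\varsigma_{sn}\Big]+[\mu,\iota_n]+\Big[\mu,\ \textstyle\sum_{s=0}^{n-1}\varsigma_{sn}\Big]=0 .
\]

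The core of the proof is the cancellation that the coefficient $\frac{u^{k_2+1}}{(k_2+1)!}$ in the definition of $\varsigma_{sn}$ is engineered to produce. On one hand, applying $\frac{\d}{\d u}\frac{\d}{\d w}$ to the new binary vertex $\frac{u^{k_2+1}}{(k_2+1)!}w_{sn}$ of $\varsigma_{sn}\gamma$ annihilates $w_{sn}$ and lowers the $u$-power, giving the two-vertex tree whose binary vertex is $\frac{u^{k_2}}{k_2!}\cdot 1$. On the other hand, applying $\mu$ to the enlarged corolla $\iota_n\gamma=\frac{u^k}{k!}f$ and retaining only the splittings with $J=\{s,n\}$ produces exactly the same trees: since $f$ contains no $w$-variable touching $n$, one has $f^{\{s,n\}}=1$, and the divided-power comultiplication $\frac{u^k}{k!}\mapsto\sum_{k_1+k_2=k}\frac{u^{k_1}}{k_1!}\otimes\frac{u^{k_2}}{k_2!}$ leaves precisely the binary coefficient $\frac{u^{k_2}}{k_2!}$ just obtained after differentiation. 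Hence the binary-vertex part of $[\frac{\d}{\d u}\frac{\d}{\d w},\varsigma_{sn}]$ cancels the $J=\{s,n\}$ part of $[\mu,\iota_n]$ term by term, the minus sign built into $\varsigma_{0n}$ (whose new vertex sits at the output) being what forces the cancellation in the case $s=0$ as well. The surviving contributions split into two matching families: $\frac{\d}{\d u}\frac{\d}{\d w}$ acting on the main vertex of $\varsigma_{sn}\gamma$ reproduces $\varsigma_{sn}(\frac{\d}{\d u}\frac{\d}{\d w}\gamma)$, while $\mu$ splitting the main vertex of $\varsigma_{sn}\gamma$, together with the remaining ($J\ne\{s,n\}$) splittings of $\mu\,\iota_n\gamma$, reproduces the leftover part of $\zeta_n(\mu\gamma)$.

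I expect the real obstacle to be purely sign bookkeeping. The generators $w_{ij}$ carry odd internal degree, the cobar construction contributes a suspension per vertex, and both $\frac{\d}{\d u}\frac{\d}{\d w}$ and $\mu$ produce Koszul signs (for instance the factor $(-1)^{\deg_w f^{I}}$ in the cocomposition); the point to verify is that the sign with which the binary $\{s,n\}$-splitting of $\mu\,\iota_n\gamma$ appears is exactly opposite to the sign produced by $\frac{\d}{\d u}\frac{\d}{\d w}$ on the binary vertex of $\varsigma_{sn}\gamma$, uniformly in $s$ and in both cases $s\ge1$ and $s=0$. Once a consistent sign convention for inserting the new input and the new binary corolla is fixed, the matching of factorial coefficients is automatic from the divided-power identity above, and the uniform degree shift recorded in the statement confirms that $\zeta_n$ is a chain map into the $[-2]$-shifted target.
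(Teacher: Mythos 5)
Your proposal is correct and follows essentially the same route as the paper: reduce to corollas via the derivation property, note $[\iota_n,\tfrac{\d}{\d u}\tfrac{\d}{\d w}]=0$ and $[\varsigma_{sn},\mu]=0$, and cancel the binary $\{s,n\}$-splittings surviving in $[\iota_n,\mu]$ against the terms of $[\varsigma_{sn},\tfrac{\d}{\d u}\tfrac{\d}{\d w}]$ where the differential hits the new binary vertex. The only part you defer --- the sign bookkeeping --- is exactly what the paper's proof carries out explicitly, confirming that the Koszul sign $(-1)^{\deg_w f}$ and the built-in minus sign of $\varsigma_{0n}$ make the cancellation work uniformly in $s$.
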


\begin{proof} The only thing that we have to check is that $\zeta_n$ commutes with the differential.
Since $\iota_n$ and $\varsigma_{s n}$, $s=0,\dots,n-1$, as well as $\frac{\d}{\d{u}}\frac{\d}{\d{w}}$ and $\mu$
are all derivations of the cobar complex, it is enough to check the compatibility on the generators.

First, observe that $[\iota_n,\frac{\d}{\d{u}}\frac{\d}{\d{w}}]=0$, because they does not interact with the $n$'th input. 
Then we compute the image of the commutator $[\iota_n,\mu]$ applied to the monomial $\frac{u^{k}}{k!} f(w_{ij})$, where the
indices $i,j$ belong a given set $K$:
\begin{align*}
& (\iota_n\mu - \mu\iota_n) \left( \frac{u^{k}}{k!} f(w_{ij})\right) =
\iota_n\left(\sum_{\begin{smallmatrix}I\sqcup J = K,\\ |J|\geq 2, |I|\geq 1,\\k_1+k_2=k\end{smallmatrix}} 
(-1)^{\deg_{w}{f^{I}}} \frac{u^{k_1}}{k_1!}f^{I}\otimes \frac{u^{k_2}}{k_2!}f^{J} \right) 
\\ \notag
& - 
\left(
\sum_{\begin{smallmatrix}(I\sqcup\{n\})\sqcup J = K\sqcup\{n\},\\ |J|\geq 2, |I|+1\geq 1,\\k_1+k_2=k\end{smallmatrix}}
 (-1)^{\deg_{w}{f^{I}}} \frac{u^{k_1}}{k_1!}f^{I}\otimes \frac{u^{k_2}}{k_2!}f^{J}
+
\sum_{\begin{smallmatrix}I\sqcup(J\sqcup\{n\}) = K\sqcup\{n\},\\ |J|+1\geq 2, |I|\geq 1,\\k_1+k_2=k\end{smallmatrix}}
 (-1)^{\deg_{w}{f^{I}}} \frac{u^{k_1}}{k_1!}f^{I}\otimes \frac{u^{k_2}}{k_2!}f^{J}
\right).
\end{align*}
Since $\iota_n$ increases the number of inputs in the operations but does not change the monomial, the only summands that are not canceled in the difference above are the ones with $|J|=1$ or $|I|=0$. Therefore,
\begin{equation*}
[\iota_n,\mu] \cdot \left( \frac{u^{k}}{k!} f(w_{ij}) \right) = - \sum_{k_1+k_2=k}
\left(\sum_{s\in K} (-1)^{\deg_{w}{f}} \frac{u^{k_1}}{k_1!}f^{*}\otimes \frac{u^{k_2}}{k_2!} 1^{s,n} 
+  \frac{u^{k_1}}{k_1!}1^{n*}\otimes \frac{u^{k_2}}{k_2!}f  
\right)
\end{equation*}
The monomial $f^{*}$ is obtained from $f$ by replacing the index $s$ by an additional index $*$
that appears in the cocomposition. 

Observe that $[\varsigma_{s n},\mu]=0$, $s=0,\dots,n-1$, since $\mu$ vanishes on binary operations. Meanwhile, for $s=1,\dots,n-1$ we have:
\begin{align*}
& \left[\varsigma_{s n},\frac{\d}{\d{u}}\frac{\d}{\d{w}}\right] \left( \frac{u^{k}}{k!} f(w_{ij})\right) 
\\ \notag
& = \sum_{k_1+k_2=k-1} \frac{u^{k_1}}{k_1!} \frac{\d f}{\d w} \otimes \frac{u^{k_2+1}}{(k_2+1)!} w^{sn} 
-\frac{\d}{\d u}\frac{\d}{\d w}
\left(\sum_{k_1+k_2 = k} \frac{u^{k_1}}{k_1!} f \otimes \frac{u^{k_2+1}}{(k_2+1)!} w^{sn}\right) 
\\ \notag
& = \sum_{k_1+k_2=k-1} \frac{u^{k_1}}{k_1!} \frac{\d f}{\d w} \otimes \frac{u^{k_2+1}}{(k_2+1)!} w^{sn}
-\sum_{k_1+k_2 =k}\left( \frac{u^{k_1-1}}{(k_1-1)!} \frac{\d f}{\d w} \otimes \frac{u^{k_2}}{k_2!} w^{sn}
+(-1)^{deg_{w}f -1}  \frac{u^{k_1}}{k_1!} f \otimes \frac{u^{k_2}}{k_2!} 1^{sn}\right) 
\\ \notag
& =  (-1)^{deg_w f} \sum_{k_1+k_2=k} \frac{u^{k_1}}{k_1!} f \otimes \frac{u^{k_2}}{k_2!} 1^{sn}.
\end{align*}
Here the sign $(-1)^{deg_{w}f -1}$ comes from the Koszul sign rule. Similarly, for $s=0$ we have:
\begin{equation*}
\left[\varsigma_{0 n},\frac{\d}{\d{u}}\frac{\d}{\d{w}}\right] \left( \frac{u^{k}}{k!} f(w_{ij})\right) =
  \sum_{k_1+k_2=k} \frac{u^{k_1}}{k_1!} 1^{* n} \otimes  \frac{u^{k_2}}{k_2!} f.
\end{equation*}

Finally, we see the cancellation:
\begin{align*}
& \left[\zeta_n, \frac{\d}{\d{u}}\frac{\d}{\d{w}} +\mu\right] \left( \frac{u^{k}}{k!} f(w_{ij})\right) 
\\ \notag
& = - \sum_{k_1+k_2=k}
\left(\sum_{s\in K} (-1)^{\deg_{w}{f}} \frac{u^{k_1}}{k_1!}f\otimes \frac{u^{k_2}}{k_2!} 1^{s,n} 
+  \frac{u^{k_1}}{k_1!}1^{n}\otimes \frac{u^{k_2}}{k_2!}f  
\right) 
\\ \notag
& \phantom{ = }\ + \sum_{k_1+k_2=k} \frac{u^{k_1}}{k_1!} 1^{* n} \otimes  \frac{u^{k_2}}{k_2!} f + 
\sum_{s=1}^{n-1}  \sum_{k_1+k_2=k} (-1)^{deg_w f} \frac{u^{k_1}}{k_1!} f \otimes \frac{u^{k_2}}{k_2!} 1^{sn} 
\\ \notag
& = 0.
\end{align*}
\end{proof}

We define a sequence of elements $\nu_n\in \Bar(\kk[u]\otimes s^2\Gerst^{\dual})(n)$, $n=2,3,\dots$. We set $\nu_2=w_{12}$ and define 
$\nu_{i+1}:=\zeta_{i+1}(\nu_i)$, $i=2,3,\dots$. Lemma~\ref{lem::homotopy:Gerst->grav} implies that 

\begin{corollary} The elements $\nu_n$ are the cocycles that project to the generators of the hypercommutative operad, $n=2,3,\dots$.
That is, for all $n\geq 2$ we have: 
\begin{equation*}
\left(\frac{\d}{\d{u}}\frac{\d}{\d{w}} +\mu\right)\nu_n=0 \quad \mbox{ and }\quad \kappa(\varepsilon(\nu_n))=\mm_n.
\end{equation*}
\end{corollary}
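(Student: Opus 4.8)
The plan is to prove both assertions simultaneously by induction on $n$, drawing all the real content from Lemma~\ref{lem::homotopy:Gerst->grav}, which guarantees that $\zeta_{n+1}$ commutes with the total differential $\frac{\d}{\d{u}}\frac{\d}{\d{w}}+\mu$. Since $\zeta_{n+1}$ has even homological degree $(-2)$, no Koszul sign intervenes, so $[\zeta_{n+1},\,\frac{\d}{\d{u}}\frac{\d}{\d{w}}+\mu]=0$ means the strict identity $(\frac{\d}{\d{u}}\frac{\d}{\d{w}}+\mu)\,\zeta_{n+1}=\zeta_{n+1}\,(\frac{\d}{\d{u}}\frac{\d}{\d{w}}+\mu)$.

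For the closedness of $\nu_n$, I would first dispatch the base case $n=2$ directly: on $\nu_2=w_{12}$ the operator $\frac{\d}{\d{u}}\frac{\d}{\d{w}}$ vanishes because $w_{12}$ carries no factor of $u$, so there is no $u$-power to lower, and $\mu$ vanishes because $\mu$ is trivial on binary cooperations. The inductive step is then immediate: assuming $(\frac{\d}{\d{u}}\frac{\d}{\d{w}}+\mu)\nu_n=0$, we get $(\frac{\d}{\d{u}}\frac{\d}{\d{w}}+\mu)\nu_{n+1}=(\frac{\d}{\d{u}}\frac{\d}{\d{w}}+\mu)\zeta_{n+1}(\nu_n)=\zeta_{n+1}\bigl((\frac{\d}{\d{u}}\frac{\d}{\d{w}}+\mu)\nu_n\bigr)=0$.

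For the identity $\kappa(\varepsilon(\nu_n))=\mm_n$ I would analyze the effect of $\varepsilon$. By Lemma~\ref{lem::gerst->grav}, $\varepsilon$ acts on cobar generators by sending $u\mapsto 0$ and projecting $\Gerst^{\dual}\twoheadrightarrow\Grav^{\dual}$, and on tree monomials it applies this vertexwise; hence any tree monomial having a vertex decorated by a strictly positive power of $u$ is annihilated. The decisive observation is that each summand $\varsigma_{s,n}$ produces a tensor factor $\frac{u^{k_2+1}}{(k_2+1)!}w_{sn}$ with $k_2+1\ge1$, so every $\varsigma$-term carries a positive $u$-power and dies under $\varepsilon$; only the $\iota_n$-part can survive. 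Writing $\nu_n=\nu_n^{(0)}+\nu_n^{(>0)}$ for the splitting into its $u$-free and $u$-divisible parts, and using that $\iota_{n+1}$ preserves the $u$-grading, I would show inductively that $\nu_n^{(0)}=\iota_n\cdots\iota_3(w_{12})$, namely the single corolla labelled by $w_{12}\in\Gerst^{\dual}(n)$ with coefficient $1$. It then remains only to evaluate the two quasi-isomorphisms on this surviving corolla: $\varepsilon$ sends the basis element $w_{12}\in\Gerst^{\dual}(n)$ to the unique $n$-ary cogenerator $\mmg_n$, giving $\varepsilon(\nu_n)=\mmg_n$, and by the description of $\kappa$ recalled in Section~\ref{sec::open:moduli} (and in the inverse-of-$\kappa$ paragraph) we have $\kappa(\mmg_n)=\mm_n$.

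The main obstacle is the bookkeeping in the last step rather than any conceptual difficulty. One must check carefully that $\iota_{n+1}$, being a derivation, sends the single corolla $\nu_n^{(0)}$ again to a single corolla (merely reindexed into the larger Orlik--Solomon algebra) instead of creating new internal edges, and that no $u$-free contribution can be hidden inside the $\varsigma$-terms. In other words, the filtration by $u$-degree must be shown to isolate the single corolla cleanly, with the unit coefficient propagated at every stage; once this is verified the coincidence $\kappa(\varepsilon(\nu_n))=\mm_n$ follows with no stray signs or multiplicities.
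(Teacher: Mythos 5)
Your proof is correct and follows the route the paper intends: closedness of $\nu_n$ is exactly the inductive consequence of Lemma~\ref{lem::homotopy:Gerst->grav} that the paper invokes, and your key observation that $\varepsilon\circ\varsigma_{sn}=0$ (because every $\varsigma$-term carries a vertex with a strictly positive power of $u$) is precisely the mechanism the paper itself uses a little later in the proof of Lemma~\ref{lem::TRR}, so that only the corolla $\iota_n\cdots\iota_3(w_{12})$ survives and maps to $\mmg_n$ and then to $\mm_n$. The paper states the corollary without written proof, so your write-up is simply a complete version of the same argument; the final bookkeeping step you flag as the main obstacle is in fact immediate, since a derivation applied to a one-vertex tree just acts on that vertex.
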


\begin{remark}
\label{rem::S_n::action::cocycles}
Any permutation $\sigma$ of the inputs will provide another choice of a cocycle given by
\begin{equation*}
 \zeta_{\sigma(n)}( \zeta_{\sigma({n-1})}(\ldots(\zeta_{\sigma(3)} (w_{\sigma(1)\sigma(2)}))\ldots)).
\end{equation*}
It is homologous to $\nu_n$ for any $\sigma\in S_n$.
\end{remark}

\subsubsection{The topological recursion relation}
\label{sec::TRR}
In this section we show how the formulas for $\nu_n$, $n=2,3,\dots$, imply the topological recursion relations.
\begin{lemma}
\label{lem::TRR}
 The following two cocycles are homologous:
\begin{equation*}
 \nu_{n} \circ_1 \frac{\partial}{\partial u} \quad \mbox{ and } \quad  
\sum_{|S_1\sqcup S_2| = n-2} \nu_{S_1\sqcup \{2,*\}} \otimes \nu_{S_2 \sqcup \{1\}}.
\end{equation*}
Similarly, the cocycle $\frac{\partial}{\partial u}\circ_0 \nu_{n}$ is homologous to the sum
$\sum_{|S_1\sqcup S_2| = n-2} \nu_{S_1\sqcup \{*\}} \otimes \nu_{S_2 \sqcup \{1,2\}}$.
\end{lemma}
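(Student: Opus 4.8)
The plan is to prove both homology relations by induction on $n$, using the recursive construction $\nu_n=\zeta_n(\nu_{n-1})$. Both relations assert that a difference of two explicit elements is exact. The left-hand sides are cocycles by Theorem~\ref{thm::psi-classes}: the slot operators $\circ_1\frac{\d}{\d u}$ and $\frac{\d}{\d u}\circ_0$ commute with the differential $\frac{\d}{\d u}\frac{\d}{\d w}+\mu$ (they represent $\psi$-classes) and the $\nu_n$ are cocycles; the inductive argument will then exhibit the difference with the right-hand side as a coboundary, which in particular re-proves that the right-hand side is closed. For $n=2$ both sides of each relation vanish (for the first because $\nu_2=w_{12}$ carries no power of $u$, for the second because the only decomposition would produce a factor of arity $1$), and this gives the base of the induction.

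The heart of the argument is the commutator identity $[\circ_1\frac{\d}{\d u},\zeta_n]=\beta_{1n}$, where $\beta_{1n}$ is the degree-preserving operator $\frac{u^k}{k!}f\mapsto\frac{u^k}{k!}f^{*}\otimes w_{1n}$ that splits off a binary vertex $\{1,n\}$ carrying no power of $u$ (here $f^{*}$ is $f$ with the index $1$ relabelled to $*$). Since $\iota_n$, $\varsigma_{sn}$ and $\circ_1\frac{\d}{\d u}$ are all derivations it suffices to check this on a single corolla. One finds $[\circ_1\frac{\d}{\d u},\iota_n]=0$ (adding the input $n$ changes no $u$-degree), and by compatibility of $\frac{\d}{\d u}$ with the divided-power coproduct implicit in $\varsigma$ one gets $[\circ_1\frac{\d}{\d u},\varsigma_{sn}]=0$ for every $s\ne 1$ (including $s=0$); only $\varsigma_{1n}$, which moves the input $1$ into the freshly created vertex, contributes, and collapsing the telescoping sum over $u$-degrees leaves exactly $\beta_{1n}$. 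Consequently $\nu_n\circ_1\frac{\d}{\d u}=\zeta_n\!\left(\nu_{n-1}\circ_1\frac{\d}{\d u}\right)+\beta_{1n}(\nu_{n-1})$, and because $[\zeta_n,d]=0$ (Lemma~\ref{lem::homotopy:Gerst->grav}) the inductive hypothesis turns the first summand into $\zeta_n$ of the $(n-1)$-th right-hand side, plus an exact term.

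It then remains to identify $\zeta_n(R_{n-1})+\beta_{1n}(\nu_{n-1})$ with $R_n:=\sum_{S_1\sqcup S_2=\{3,\dots,n\}}\nu_{S_1\sqcup\{2,*\}}\otimes\nu_{S_2\sqcup\{1\}}$ up to a coboundary. I would split the sum defining $R_n$ according to whether the new input $n$ lies in $S_1$ or in $S_2$. Every term in which $n$ merely enlarges an already-present vertex of arity $\ge 2$ assembles, via $\nu_m=\zeta_m(\nu_{m-1})$ and the Leibniz rule for the derivation $\zeta_n$, precisely into $\zeta_n(R_{n-1})$; the only term escaping this pattern is the one in which $\{1,n\}$ forms a fresh binary vertex, namely the single summand $\nu_{\{2,\dots,n-1,*\}}\otimes w_{1n}$. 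On the other hand $\beta_{1n}(\nu_{n-1})=(\nu_{n-1}|_{1\to *})\otimes w_{1n}$, and $\nu_{n-1}|_{1\to *}$ is the cocycle built from the reordered inputs $(*,2,\dots,n-1)$, hence homologous to $\nu_{\{2,\dots,n-1,*\}}$ by Remark~\ref{rem::S_n::action::cocycles}. Since $d(w_{1n})=0$, the map $\xi\mapsto\xi\otimes w_{1n}$ sends coboundaries to coboundaries, so $\beta_{1n}(\nu_{n-1})$ and the fresh term differ by an exact element, which closes the induction for the first relation.

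The second relation is proved by the same scheme at the output slot: one checks $[\frac{\d}{\d u}\circ_0,\zeta_n]=\beta_{0n}$, where now the only contributing operator is $\varsigma_{0n}$ (which creates the new root vertex $\{*,n\}$) and $\beta_{0n}$ splits off that root vertex without a power of $u$; the combinatorial bookkeeping is identical after interchanging the roles of inner and outer factors and of the marked inputs $\{1\}$ and $\{1,2\}$. I expect the main obstacle to be purely the sign bookkeeping under the Koszul rule — both in the telescoping that produces $\beta_{1n}$ and in the Leibniz splitting of $\zeta_n(R_{n-1})$ — together with the care needed to invoke Remark~\ref{rem::S_n::action::cocycles}, so that the relabelled cocycle $\nu_{n-1}|_{1\to *}$ is correctly matched with the standard generator $\nu_{\{2,\dots,n-1,*\}}$ occurring in $R_n$.
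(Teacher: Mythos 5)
Your key computation --- the commutator $\left[\circ_1\frac{\d}{\d u},\zeta_n\right]$, with only $\varsigma_{1n}$ contributing and the telescoping over $u$-degrees leaving the single term $\frac{u^k}{k!}f^{*}\otimes w_{1n}$ --- is exactly the computation at the heart of the paper's proof, and your base cases are fine. But your inductive step has a genuine gap in the identification of $\zeta_n(R_{n-1})+\beta_{1n}(\nu_{n-1})$ with $R_n$. When you split $R_n$ according to whether $n\in S_1$ or $n\in S_2$ and expand $\nu_{S_1\sqcup\{2,*,n\}}=\zeta_n(\nu_{S_1\sqcup\{2,*\}})$ (resp.\ $\nu_{S_2\sqcup\{1,n\}}=\zeta_n(\nu_{S_2\sqcup\{1\}})$), the operator $\zeta_n$ applied to a \emph{stand-alone} factor contains $\varsigma_{*n}$ (for the first factor, at its input $*$) and $\varsigma_{0n}$ (for the second factor, at its output). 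After composing, these terms place a new binary vertex $w_{*n}$, carrying a positive power of $u$, on the \emph{internal edge} joining the two factors. By contrast, $\zeta_n$ applied to the composite tree $\nu_{S_1\sqcup\{2,*\}}\circ_*\nu_{S_2\sqcup\{1\}}$ only runs $\varsigma_{sn}$ over the global inputs and the global output --- the internal edge is not a slot --- so $\zeta_n(R_{n-1})$ contains no such terms. The two families of leftover terms do not cancel against each other (the $u$-powers on the new binary vertex are split off from different vertices, and there is a sign discrepancy from the definition of $\varsigma_{0n}$), so your claim that everything except the fresh $\nu_{\{2,\dots,n-1,*\}}\otimes w_{1n}$ term ``assembles precisely into $\zeta_n(R_{n-1})$'' is false as stated. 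You would have to prove separately that the sum of these internal-edge terms is exact, which is not addressed and is not obvious by inspection.

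The cleanest way to dispose of them is precisely the observation the paper uses instead of an induction: since $\kappa\circ\varepsilon$ is a quasi-isomorphism onto $(\Hycomm,0)$, two cocycles are homologous if and only if they have the same image under $\kappa\circ\varepsilon$, and the augmentation $\varepsilon$ kills every positive power of $u$, hence every $\varsigma$-contribution ($\varepsilon\circ\varsigma_{sn}=0$). With that one remark, the paper evaluates $\varepsilon\bigl(\nu_n\circ_1\frac{\d}{\d u}\bigr)$ directly --- only the commutators $[\circ_1\frac{\d}{\d u},\varsigma_{1j}]$ sandwiched between $\iota$'s survive --- and reads off the right-hand side with no explicit coboundaries, no Leibniz bookkeeping on composites, and no appeal to Remark~\ref{rem::S_n::action::cocycles}. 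If you want to keep your explicit-homotopy induction, you must either exhibit a primitive for the internal-edge terms or fall back on the $\varepsilon$-projection argument, at which point the induction becomes redundant.
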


\begin{proof}
Recall that the meaning of the derivation $\circ_i\frac{\partial}{\partial u}$ is to take the partial derivative with respect
to the variable $u$ attached to the $i$-th input (or, in the case of $i=0$, output) of the element in the cobar complex (c.~f. Theorem~\ref{thm::psi-classes}).
A direct computation similar to the one we made in the proof of Lemma~\ref{lem::homotopy:Gerst->grav} 
shows that the commutator $[\circ_i\frac{\d}{\d u},\zeta_n]$ acts on the monomial generator 
$\frac{u^{k}}{k!} f(w_{ij})$ by the following formula:
\begin{equation}
\left[\circ_i\frac{\d}{\d u},\zeta_n\right] \left( \frac{u^{k}}{k!} f(w_{ij}) \right) = 
\left[\circ_i\frac{\d}{\d u},\varsigma_{in} \right] \left( \frac{u^{k}}{k!} f(w_{ij}) \right) = \frac{u^{k}}{k!} f^{*}\otimes w_{in}.
\end{equation}
Here $f^{*}$ is obtained from $f$ by replacing the index $i$ with the index $*$ corresponding to the coproduct.

Note that two cocycles are homologous if and only if they have the same image under the morphism
$\kappa\circ \varepsilon$, since this morphism is a projection on the homology.
Recall that the augmentation map $\varepsilon$ annihilates all positive powers of $u$ and, in particular,
$\varepsilon\circ \varsigma_{sn}=0$. This implies the following sequence of identities:
\begin{align*}
\varepsilon\left(\nu_n\circ_1 \frac{\partial}{\partial u}\right) & = 
\varepsilon\left(\sum_{j=3}^{n}\zeta_n\ldots[\circ_1\frac{\d}{\d u},\zeta_j]\ldots\zeta_3 w_{12}\right)
\\ \notag
& = \varepsilon\left(\sum_{j=3}^{n}\iota_n\cdots\iota_{j+1}\left[\circ_1\frac{\partial}{\partial u},\varsigma_{1j}\right]
\iota_{j-1}\cdots \iota_{3}(w_{12})\right) 
\\ \notag
& = \varepsilon\left(\sum_{j=3}^{n} 
\sum_{S_1\sqcup S_2 = \{j+1,\ldots,n\}}\left( \left( \prod_{s\in S_1}\iota_{s} \right) \iota_{j-1}\cdots\iota_3 (w_{2 *})\right) \otimes 
\left( \prod_{s\in S_2}\iota_{s} (w_{1 j})\right)\right) 
\\ \notag
& =\varepsilon\left(\sum_{S_1\sqcup S_2 = \{3,\dots,n\}} \nu_{S_1\sqcup \{2,*\}} \otimes \nu_{S_2 \sqcup \{1\}}\right).
\end{align*}
The second statement of Lemma~\ref{lem::TRR} deals with the derivation $\frac{\d}{\d u}\circ_0$ 
with respect to the variable $u$ attached to the output. The proof is absolutely the same. 
\end{proof}

These homologous properties of the cocycles $\nu_n$ implies the topological recursion relations.
\begin{corollary}
\label{cor::TRR}
We have:
\begin{equation}
\label{eq::TRR}
\psi_0^{d_0}\psi_1^{d_1+1}\psi_2^{d_2}\cdots \psi_n^{d_n} [\oM_{0,n+1}] =
\sum_{S_1\sqcup S_2= \{3,\ldots,n\}}  \prod_{s\in S_1\sqcup\{0,2\}} \psi_{s}^{d_s} 
[\oM_{0,|S_1|+3}] 
\otimes \prod_{s\in S_2\sqcup\{1\}} \psi_{s}^{d_s} [\oM_{0,|S_2|+2}]. 
\end{equation}
Similarly,
\begin{equation}
\label{eq::TRR-0}
\psi_0^{d_0+1}\psi_1^{d_1}\cdots \psi_n^{d_n} [\oM_{0,n+1}] =
\sum_{S_1\sqcup S_2= \{3,\ldots,n\}}  \prod_{s\in S_1\sqcup\{0\}} \psi_{s}^{d_s} 
[\oM_{0,|S_1|+2}] 
\otimes \prod_{s\in S_2\sqcup\{1,2\}} \psi_{s}^{d_s} [\oM_{0,|S_2|+3}].
\end{equation}
\end{corollary}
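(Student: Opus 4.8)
The plan is to derive the topological recursion relations directly from the homology identities of Lemma~\ref{lem::TRR} by decorating the fundamental cocycles $\nu_n$ with arbitrary powers of $\psi$-classes and then projecting down to $\Hycomm$. The bridge is Theorem~\ref{thm::psi-classes}: each derivation $\circ_i\frac{\d}{\d u}$ commutes with the differential $\frac{\d}{\d u}\frac{\d}{\d w}+\mu$ on $\Bar(\kk[u]\otimes s^2\Gerst^{\dual})$ and, after the projection $\kappa\circ\varepsilon$ onto homology, represents evaluation against $\psi_i$. Consequently the decorated cocycle $(\circ_0\frac{\d}{\d u})^{d_0}\cdots(\circ_n\frac{\d}{\d u})^{d_n}\nu_n$ is again $\frac{\d}{\d u}\frac{\d}{\d w}+\mu$-closed, and its image under $\kappa\circ\varepsilon$ is precisely $\psi_0^{d_0}\cdots\psi_n^{d_n}[\oM_{0,n+1}]$. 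This is what lets me upgrade the identity of Lemma~\ref{lem::TRR}, stated only for the bare classes, to arbitrary $\psi$-powers.

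First I would set $D:=\prod_{i=0}^n(\circ_i\frac{\d}{\d u})^{d_i}$ and apply it to the first homology identity of Lemma~\ref{lem::TRR}. Since the operators $\circ_i\frac{\d}{\d u}$ commute pairwise and each commutes with the differential, applying $D$ sends homologous cocycles to homologous cocycles: if $a-b$ is exact then so is $D(a)-D(b)=D(\text{exact})$. On the left this yields $D(\nu_n\circ_1\frac{\d}{\d u})=(\circ_0\frac{\d}{\d u})^{d_0}(\circ_1\frac{\d}{\d u})^{d_1+1}(\circ_2\frac{\d}{\d u})^{d_2}\cdots(\circ_n\frac{\d}{\d u})^{d_n}\nu_n$, whose image under $\kappa\circ\varepsilon$ is exactly the left-hand side of~\eqref{eq::TRR}, the extra power of $\psi_1$ coming from the single $\circ_1\frac{\d}{\d u}$ already present in the lemma.

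Second I would distribute $D$ over the right-hand side $\sum_{S_1\sqcup S_2=\{3,\ldots,n\}}\nu_{S_1\sqcup\{2,*\}}\otimes\nu_{S_2\sqcup\{1\}}$. The key observation is that in each summand the external slots $0,2$ and those in $S_1$ lie in the first tensor factor (whose root is the global output $0$), while $1$ and the slots in $S_2$ lie in the second, and the gluing node $*$ — not being among $0,\ldots,n$ — receives no $\psi$-decoration. Hence on each summand $D$ factors as $D_1\otimes D_2$ with $D_1=\prod_{s\in S_1\sqcup\{0,2\}}(\circ_s\frac{\d}{\d u})^{d_s}$ and $D_2=\prod_{s\in S_2\sqcup\{1\}}(\circ_s\frac{\d}{\d u})^{d_s}$. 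Projecting each decorated factor by $\kappa\circ\varepsilon$ turns $D_1\nu_{S_1\sqcup\{2,*\}}$ into $\prod_{s\in S_1\sqcup\{0,2\}}\psi_s^{d_s}[\oM_{0,|S_1|+3}]$ and $D_2\nu_{S_2\sqcup\{1\}}$ into $\prod_{s\in S_2\sqcup\{1\}}\psi_s^{d_s}[\oM_{0,|S_2|+2}]$, which is the right-hand side of~\eqref{eq::TRR}. Relation~\eqref{eq::TRR-0} follows identically from the second homology identity of Lemma~\ref{lem::TRR}, replacing $\circ_1\frac{\d}{\d u}$ by $\frac{\d}{\d u}\circ_0$ and sending the fixed points $1,2$ into the second factor.

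The hard part will be the bookkeeping in the second step: one must check that $\circ_i\frac{\d}{\d u}$ genuinely distributes over the operadic cocomposition without cross terms, i.e.\ that differentiating the $u$-variable attached to an external slot $i$ touches only the tensor factor adjacent to that slot. This follows because the $u$-variable of an external slot lives in a single vertex of the underlying two-vertex tree, so the Leibniz property of the derivation localizes it to one factor; nevertheless it requires careful tracking of the relabeling of inputs across the cocomposition — in particular the identification of the node $*$ and the fact that it stays undecorated — exactly in the style of the commutator computations in the proofs of Lemma~\ref{lem::homotopy:Gerst->grav} and Lemma~\ref{lem::TRR}.
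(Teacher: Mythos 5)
Your proposal is correct and follows essentially the same route as the paper: both identify $\psi_i$ with the derivation $\circ_i\frac{\d}{\d u}$ via Theorem~\ref{thm::psi-classes}, apply the full product of these commuting derivations to the homologous pair of cocycles from Lemma~\ref{lem::TRR}, distribute it over the two tensor factors, and project by $\kappa\circ\varepsilon$. The only cosmetic difference is that the paper writes this as a single chain of equalities under $\kappa\circ\varepsilon$ rather than first noting separately that $D$ preserves the homology class.
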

\begin{proof} It follows from Theorem~\ref{thm::psi-classes} that we can use the partial derivation with respect to $u$ attached 
to the $i$-th input (respectively, to the output) instead of taking $\psi$-class in the $i$-th marked point (respectively, to the $0$-th marked point).
Therefore,
\begin{align*}
& \psi_1\prod_{s\in\{0,\ldots,n\}}\psi_s^{d_s} [\oM_{n+1}]  = 
\kappa\circ\varepsilon\left(\left(\frac{\partial}{\partial u}\circ_0\right)^{d_0}\prod_{s=1}^{n} \left(\circ_s \frac{\partial}{\partial u}\right)^{d_s} 
\circ_i\frac{\d}{\d u} \nu_n \right) 
\\ \notag &
= \kappa\circ\varepsilon\left(\left(\frac{\partial}{\partial u}\circ_0\right)^{d_0}\prod_{s=1}^{n} \left(\circ_s \frac{\partial}{\partial u}\right)^{d_s} 
\sum_{S_1\sqcup S_2 = \{3,\ldots,n\}}
\nu_{S_1\sqcup \{2,*\}} \otimes \nu_{S_2\sqcup\{1\}}\right)
\\ \notag &
= \sum_{S_1\sqcup S_2 = \{3,\ldots,n\}}
\kappa\circ\varepsilon\left(\left(\frac{\partial}{\partial u}\circ_0\right)^{d_0}\prod_{s\in S_1\sqcup\{2\}} 
\left(\circ_s \frac{\partial}{\partial u}\right)^{d_s} 
\nu_{S_1\sqcup \{2,*\}}\right) \otimes 
\kappa\circ\varepsilon\left(\prod_{s\in S_2\sqcup\{1\}} 
\left(\circ_s \frac{\partial}{\partial u}\right)^{d_s} 
\nu_{S_2\sqcup\{1\}}\right)
\\ \notag &
=
\prod_{s\in S_1\sqcup\{0,2\}} \psi_{s}^{d_s} 
[\oM_{0,|S_1|+3}] 
\otimes \prod_{s\in S_2\sqcup\{1\}} \psi_{s}^{d_s} [\oM_{0,|S_2|+2}] 
\end{align*}
The proof of the second statement of the corollary is exactly the same.
\end{proof}

\begin{remark}
The symmetric group acts on the cocycles $\nu_n$ changing them to the homologous one. 
Therefore, one can change the indices $1,2$ in the statement of Lemma~\ref{lem::TRR} and Corollary~\ref{cor::TRR} to any other pair of indices $i,j\in\{1,\ldots,n\}$. This completes our algebraic proof of the topological recursion relations.
\end{remark}

In particular, Equations~\eqref{eq::TRR} and~\eqref{eq::TRR-0} imply combinatorially that in the case $d_0+\cdots+d_n = n-2$
the product of $\psi$-classes evaluated on the fundamental class coincides 
with the iterated multiplication up to a multinomial coefficient:
\begin{equation}
\psi_0^{d_0}\ldots\psi_n^{d_n}[\oM_{n+1}](x_1,\ldots,x_n) = \frac{(n-2)!}{d_0!\ldots d_n!} m(x_1,\ldots,x_n)
\end{equation}
This formula explains the factors used in the definition of the map $\theta$ and, in particular, in Equation~\eqref{eq:psi-int}.

\subsection{The inverse of $j$}

In this section we construct the cocycles in the complex
\begin{equation}\label{eq:complex}
\left(\Bar(\kk[u]\otimes s^2\Gerst^{\dual})\star \kk[\Delta]/\Delta,
\frac{\d}{\d{u}} \frac{\d}{\d{ w}} +  \mu + \Delta\frac{\d}{\d u} + \Delta\frac{\d}{\d\phi}
\right)
\end{equation}
that represent there the generators $\mm_n$, $n=2,3,\ldots$, of $\Hycomm$.

The construction uses the definition of the homotopy quotient. 
Recall that the defining Equation~\eqref{eq::homotopy_quotients_def} implies the following two identities:
\begin{equation*}
(d+\Delta\frac{\d}{\d\phi}) \Phi(z) = \Phi(z)(d+ z\Delta), \quad
\Phi(z)^{-1} (d+\Delta\frac{\d}{\d\phi}) = (d+ z\Delta)\Phi(z)^{-1}
\end{equation*}
Therefore, the adjoint action of $\Phi$ on the complex~\eqref{eq:complex} 
given by $\Phi^{ad}(z)\colon \gamma \mapsto \Phi(z) \gamma \Phi(z)^{-1}$
satisfies the following equation:
\begin{equation}
\label{eq::Phi_adjoint}
\left(d+\Delta\frac{\d}{\d\phi}\right) \Phi^{ad}(z)(\gamma) = \Phi^{ad}(z)( d\gamma + z[\Delta,\gamma]).
\end{equation}
We use $\Phi(z)$ as a group-like element. This means that we want $\Phi^{ad}(z)$ must preserve the operadic 
composition, that is, $\Phi^{ad}(z)(\alpha\circ\beta)=(\Phi^{ad}(z)\alpha)\circ(\Phi^{ad}(z)\beta)$, where $z$ is an operator acting on corollas.

\begin{lemma}
\label{lem::inv::j}
Let $\nu$ be a cocycle in
$\left(\Bar(\kk[u]\otimes s^2\Gerst^{\dual}), \frac{\d}{\d{u}}\frac{\d}{\d{w}} +  \mu\right)$.
The cochain $\Phi^{ad}(\frac{\d}{\d u})\nu$ is a cocycle in 
the dg-operad $\left(\Bar(\kk[u]\otimes s^2\Gerst^{\dual})\star \kk[\Delta]/\Delta,
\frac{\d}{\d{u}} \frac{\d}{\d{ w}} +  \mu + \Delta^{ad}\frac{\d}{\d u} + \Delta\frac{\d}{\d\phi}
\right)$. Moreover, $j(\Phi^{ad}\left(\frac{\d}{\d u}\right)\nu) = \nu$.
\end{lemma}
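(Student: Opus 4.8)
The plan is to realize the inverse of $j$ by \emph{conjugating} the upstairs cocycle $\nu$ with the group-like element $\Phi(z)=\exp(\phi(z))$ and then specializing the formal parameter $z$ to the operator $\frac{\d}{\d u}$. Write $d_0:=\frac{\d}{\d{u}}\frac{\d}{\d{w}}+\mu$ for the internal differential of the source complex and $D:=d_0+\Delta^{ad}\frac{\d}{\d u}+\Delta\frac{\d}{\d\phi}$ for the differential of the target complex~\eqref{eq:complex}. Concretely, $\Phi^{ad}(\frac{\d}{\d u})\nu$ attaches $\Phi(z)$ at the output and $\Phi(z)^{-1}$ at each input of $\nu$ and lets the $z$'s act as $u$-differentiations on the corollas of the underlying tree; since $\Phi$ is built only from the $\phi_i$, the result lies in the target complex and carries no $\Delta$. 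The single tool for the cocycle property is the intertwining relation~\eqref{eq::Phi_adjoint}, read as an identity of operators.

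First I would prove that $\Phi^{ad}(\frac{\d}{\d u})\nu$ is $D$-closed. Specializing $z=\frac{\d}{\d u}$ in~\eqref{eq::Phi_adjoint}, the internal differential $d_0$ passes through the specialization unchanged, while the right-hand contribution $z[\Delta,\gamma]$ becomes $\frac{\d}{\d u}[\Delta,\gamma]$. As $\Delta$ carries no $u$-variable, $\frac{\d}{\d u}$ commutes with the operadic bracket, so
\begin{equation*}
\frac{\d}{\d u}[\Delta,\gamma]=\Bigl\{\Delta,\tfrac{\d\gamma}{\d u}\Bigr\}=\Delta^{ad}\frac{\d}{\d u}(\gamma),
\end{equation*}
which is precisely the extra term appearing in $D$. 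Thus the specialized relation~\eqref{eq::Phi_adjoint} expresses exactly that $\Phi^{ad}(\frac{\d}{\d u})$ is a chain map: once the created $\Delta$-term is transported across $\Phi^{ad}(\frac{\d}{\d u})$ by means of the group-like (operadic-composition-preserving) property, it combines with the left-hand side into the full differential $D$, yielding the intertwining $D\circ\Phi^{ad}(\frac{\d}{\d u})=\Phi^{ad}(\frac{\d}{\d u})\circ d_0$. Applying this to a $d_0$-cocycle $\nu$ and using $d_0\nu=0$ gives $D\bigl(\Phi^{ad}(\frac{\d}{\d u})\nu\bigr)=\Phi^{ad}(\frac{\d}{\d u})(d_0\nu)=0$, as required.

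The identity $j(\Phi^{ad}(\frac{\d}{\d u})\nu)=\nu$ is the easy half. Recall from Corollary~\ref{lem::delta/delta} that $j$ sends every $\phi_i$ and every $\Delta$ to zero. Expanding $\Phi^{ad}(\frac{\d}{\d u})\nu=\Phi(\frac{\d}{\d u})\,\nu\,\Phi(\frac{\d}{\d u})^{-1}$ with $\Phi=\exp(\phi)=\mathrm{Id}+(\text{terms containing some }\phi_i)$, the only summand free of all the $\phi_i$ is $\nu$ itself. Hence $j$ collapses the conjugation to the identity and returns $\nu$.

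The hard part, and the step I expect to be the main obstacle, is making the specialization $z=\frac{\d}{\d u}$ rigorous rather than merely formal, and in particular justifying the transport of $\Delta^{ad}\frac{\d}{\d u}$ across $\Phi^{ad}(\frac{\d}{\d u})$ with the correct signs. One must check that the single symbol $z$ is specialized consistently both where it packages the $\phi_i$ inside $\Phi^{ad}(z)$ and where it multiplies $[\Delta,\gamma]$, and that the group-like property $\Phi^{ad}(z)(\alpha\circ\beta)=(\Phi^{ad}(z)\alpha)\circ(\Phi^{ad}(z)\beta)$ survives this specialization. In practice this reduces to verifying that $\frac{\d}{\d u}$, acting as a derivation, distributes correctly across the corollas of a tree and commutes past the freshly created $\Delta$-labelled edges, with all Koszul signs governed by $\deg\Delta=-1$ and $\deg\phi_i=-2i$. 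Once this bookkeeping is settled the two displayed properties follow formally as above, completing the proof of the lemma.
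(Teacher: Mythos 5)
Your proof is correct and takes essentially the same route as the paper: both rest on specializing the intertwining relation~\eqref{eq::Phi_adjoint} at $z=\frac{\d}{\d u}$, so that the $z[\Delta,\cdot]$ contribution absorbs the $\Delta^{ad}\frac{\d}{\d u}$ term and yields $\bigl(\frac{\d}{\d{u}}\frac{\d}{\d{w}}+\mu+\Delta^{ad}\frac{\d}{\d u}+\Delta\frac{\d}{\d\phi}\bigr)\Phi^{ad}\bigl(\frac{\d}{\d u}\bigr)\nu=\Phi^{ad}\bigl(\frac{\d}{\d u}\bigr)\bigl(\frac{\d}{\d{u}}\frac{\d}{\d{w}}+\mu\bigr)\nu=0$, and both deduce $j(\Phi^{ad}(\frac{\d}{\d u})\nu)=\nu$ from the fact that $j$ annihilates the $\phi_i$, so only the leading term of the conjugation survives. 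The sign bookkeeping you flag as the remaining obstacle is exactly what the paper also leaves implicit, so there is no substantive difference between the two arguments.
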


\begin{proof}
Equation~\eqref{eq::Phi_adjoint} implies that
\begin{equation*}
\left(\frac{\d}{\d{u}} \frac{\d}{\d{ w}} +  \mu+\Delta\frac{\d}{\d \phi} + \Delta^{ad}\frac{\d}{\d u}\right) 
\Phi^{ad}\left(\frac{\d}{\d u}\right)\nu  
= 
\Phi^{ad}\left(\frac{\d}{\d u}\right) \left( \frac{\d}{\d{u}} \frac{\d}{\d{ w}} +  
\mu+ \Delta^{ad}\frac{\d}{\d u} -\Delta^{ad}\frac{\d}{\d u}\right)\nu 
= 0.
\end{equation*}
Since $j$ annihilates $\phi_i$, $i=1,2,\ldots$, the second statement of the lemma is obvious.
\end{proof}

Therefore, cocycles 
representing the generators $\mm_n$, $n=2,3,\ldots$, of $\Hycomm$ in the dg-operad~\eqref{eq:complex} 
can be given by the formula
\begin{equation}\label{eq:cocycles}
\Phi^{ad}\left(\frac{\d}{\d u}\right)\nu_n =
 \Phi^{ad}\left(\frac{\d}{\d u}\right)\zeta_{n}\cdots\zeta_{3}(w_{12}).
\end{equation}

\subsection{The projection $\pi\circ\epsilon$}
In this section we apply the projection $\pi\circ\epsilon$ to the cocycles given by Equation~\eqref{eq:cocycles}.

Recall that the projection $\epsilon$ from Section~\ref{sec::BV=Gerst*Delta} maps $u$ to $0$.
The projection $\pi$ given by Equations~\eqref{eq::def_BGerst->BV} and~\eqref{eq::pi::BGerst->Gerst} 
annihilates all non-binary trees in the cobar complex. In particular, $\pi$ vanishes on all contributions of the operators $\iota_n$ 
for the formulas $\nu_m$  for all $3\leq n\leq m$. Therefore,
\begin{equation}
\label{eq::Hycom_to_BV_with_u}
\theta_n:= \pi\circ\epsilon\left(\Phi^{ad}\left(\frac{\d}{\d u}\right)\cdot \nu_n\right) =
\pi\circ\epsilon\left(\Phi^{ad}\left(\frac{\d}{\d u}\right)
\sum_{\begin{smallmatrix}
       (i_3,\ldots,i_n) :\\
        \forall s \  0\leq i_s\leq s
      \end{smallmatrix}} 
\varsigma_{i_n n}\cdots\varsigma_{i_3 3} (w_{12}) \right).
\end{equation}

Finally we are able to state our main result:
\begin{theorem} \label{thm:theta}
 The map $\theta\colon\Hycomm\to\BV/\Delta$ defined by $\theta\colon \mm_n\mapsto \theta_n$ is a quasi-isomorphism of dg-operad. It makes 
the diagram~\eqref{eq::diag::Hycom_BV} commutative.
\end{theorem}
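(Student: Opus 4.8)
\emph{Proof plan.} The plan is to read off $\theta$ as the clockwise transport of the generators $\mm_n$ around Diagram~\eqref{eq::diag::Hycom_BV}, and then to upgrade the resulting assignment of cohomology classes into a strict morphism of dg-operads. By Equation~\eqref{eq::Hycom_to_BV_with_u} we have $\theta_n=\pi\circ\epsilon\bigl(\Phi^{ad}(\tfrac{\d}{\d u})\nu_n\bigr)$, so the first point to record is that each $\theta_n$ is a cocycle. Indeed, Lemma~\ref{lem::inv::j} shows that $\Phi^{ad}(\tfrac{\d}{\d u})\nu_n$ is $\left(\frac{\d}{\d u}\frac{\d}{\d w}+\mu+\Delta^{ad}\frac{\d}{\d u}+\Delta\frac{\d}{\d\phi}\right)$-closed and projects to $\nu_n$ under $j$; since $\epsilon$ and $\pi$ are chain maps (Lemma~\ref{lem::uDelta->semiDelta} and Equation~\eqref{eq::def_BGerst->BV}), their composite sends it to a $\Delta\frac{\d}{\d\phi}$-closed element of $\BV/\Delta$. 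This already yields the chain-map condition, because $\Hycomm$ carries the zero differential.

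The decisive structural feature is the shape of the image. By construction each $\theta_n$ lies in the suboperad $\mathcal{S}\subset\BV/\Delta$ generated by the multiplication $m$ and the operators $\phi_i$: the cocycles $\nu_n$ involve only $w$- and $u$-monomials, the conjugation $\Phi^{ad}$ inserts only $\phi$'s, and the projection $\pi\circ\epsilon$ sets $u=0$ and kills every non-binary vertex, so that only products of $m$ decorated by $\phi$'s survive (compare Example~\ref{ex::theta_23}). On the other hand, the differential $\Delta\frac{\d}{\d\phi}$ always creates a factor $\Delta$, as one sees from Equations~\eqref{eq:d-a}. Hence $\mathcal{S}\cap\operatorname{Im}(\Delta\frac{\d}{\d\phi})=0$, and the natural map $\mathcal{S}\cap\ker(\Delta\frac{\d}{\d\phi})\to H^{\bullet}(\BV/\Delta)$ is injective. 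In particular $\theta_n$ is the unique representative of its class lying in $\mathcal{S}$, so the ambiguity coming from the choices made along the diagram — different orderings of inputs yield homologous cocycles by Remark~\ref{rem::S_n::action::cocycles} — evaporates after projection, $\theta_n$ is canonical, and Diagram~\eqref{eq::diag::Hycom_BV} commutes by construction.

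With the canonical form established, the operad-morphism property becomes formal. Operadic compositions $\theta_n\circ_i\theta_m$ again lie in $\mathcal{S}$, and by the Leibniz rule for $\Delta\frac{\d}{\d\phi}$ a composite of closed elements is closed, so these compositions lie in $\mathcal{S}\cap\ker(\Delta\frac{\d}{\d\phi})$. For each hypercommutative relation~\eqref{hycom_rel}, the difference of its two sides evaluated on the $\theta_n$ is therefore a closed element of $\mathcal{S}$; moreover it maps to $0$ in $H^{\bullet}(\BV/\Delta)$, because the cohomology map $[\theta]$ agrees with the zig-zag composite $(\pi\circ\epsilon)\circ j^{-1}\circ(\kappa\circ\varepsilon)^{-1}$ of operad isomorphisms furnished by Theorem~\ref{thm::diag:Hycom->BV} and hence respects the relations of $\Hycomm$. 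By the injectivity just proved, this difference is already zero in $\mathcal{S}$, i.e.\ the $\theta_n$ satisfy the defining relations of $\Hycomm$ exactly; consequently $\theta$ extends to a morphism of dg-operads, as claimed in Theorem~\ref{thm::formula_BV-Hycomm}.

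Finally, the quasi-isomorphism statement follows at once: the induced map $[\theta]$ is the composite of the cohomology isomorphisms of Theorem~\ref{thm::diag:Hycom->BV}, hence an isomorphism $\Hycomm\xrightarrow{\ \sim\ }H^{\bullet}(\BV/\Delta)$. I expect the genuinely delicate point to be not this last step — which is formal once the diagram commutes — but the passage from a cohomology-level map to a strict operad morphism, that is, verifying that relation~\eqref{hycom_rel} holds on the nose rather than merely up to a coboundary. The argument above reduces this entirely to the single transparent fact that the subspace of $\Delta$-free elements of $\BV/\Delta$ contains no nonzero coboundary.
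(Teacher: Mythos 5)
Your proposal is correct and follows essentially the same route as the paper: both arguments hinge on the observation that the $\theta_n$ land in the suboperad generated by the multiplication and the $\phi_i$'s, which meets the image of $\Delta\frac{\d}{\d\phi}$ trivially (every coboundary contains a $\Delta$) and therefore injects into the cohomology computed in Theorem~\ref{thm::diag:Hycom->BV}. Your treatment of the passage from cohomology classes to a strict operad morphism — checking relation~\eqref{hycom_rel} on the nose via this injectivity — is a welcome expansion of a step the paper compresses into the sentence ``the same is true if we apply the diagram chase for any element of $\Hycomm$,'' but it is the same underlying mechanism.
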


\begin{proof}
Theorem~\ref{thm::diag:Hycom->BV} implies that the cohomology of $\left(\BV/\Delta,\Delta\frac{\d}{\d\phi}\right)$ is isomorphic to $\Hycomm$.

We denote by $\Q\subset \BV/\Delta$ the intersection of the kernel of $\Delta\frac{\d}{\d\phi}$  
with the suboperad of $\BV/\Delta$ generated by multiplication and $\phi_i$'s. 
Observe that the suboperad $\Q\subset\BV/\Delta$ belongs to the cohomology. Indeed, by definition $\Q$ doesn't intersect the image of $\Delta\frac{\d}{\d\phi}$ and belongs to the kernel of $\Delta\frac{\d}{\d\phi}$.
Note that $\Delta$ does not appear in the representing cocycles $\Phi^{ad}(\frac{\d}{\d u}) \nu_n$
and, therefore, $\theta_n$ also does not contain $\Delta$ in its presentation in terms of the generators.
This implies that the cocycles $\theta_n$ belong to $\Q$, $n=2,3,\dots$. 

The same is true if we apply the diagram chase for any element of $\Hycomm$.
Therefore the full cohomology of $\left(\BV/\Delta,\Delta\frac{\d}{\d\phi}\right)$ is equal to $\Q$, and the map
$\mm_n\mapsto \theta_n$, $n\geq 2$, defines the isomorphism between $\Hycomm$ and $\Q$.
\end{proof}

We finish this section with a diagram that summarizes our chase of cocycles in Diagram~\eqref{eq::diag::Hycom_BV}:
\begin{equation*}
\xymatrix{
{
 \nu_n \in \Bar(\kk[u]\otimes s^2\Gerst^{\dual})
}
\ar@{->}[d]^{\kappa\circ\varepsilon}
&
{ 
\Phi^{ad}\left(\frac{\d}{\d u}\right)\nu_n
\in \frac{\Bar(\kk[u]\otimes s^2\Gerst^{\dual})\star \kk[\Delta]}{\Delta} 
}
\ar@{->}[l]_-{j}
\ar@{->}[d]^{\pi\circ\epsilon}
\\
{
 \mm_n \in
\Hycomm 
}
\ar@{..>}[r]^{\theta} & 
{
\theta_n
\in
\BV/\Delta 
}
}
\end{equation*}

\subsection{Examples for $n=2$ and $3$} 
\label{sec::computation:2:3}
In this section we compute Formula~\eqref{eq::Hycom_to_BV_with_u} for $n=2$ and $n=3$
and show the coincidence of two morphism (one via Givental graphs, another via diagram chase) for $n=2,3$. 
A direct computation for $n=2$ gives that 
\begin{equation*}
\theta_2=\pi\circ\epsilon\left( \Phi^{ad}\left(\frac{\d}{\d u}\right)
\left(w_{12}\right) \right)=\pi\left(w_{12}\right)=m_2,
\end{equation*}
which is exactly the formula for $\theta_2$ described in Section~\ref{sec:quasi}.

In the case of $n=3$, we have:
\begin{equation*}
\theta_3 = \pi\circ\epsilon\circ \Phi^{ad}\left(\frac{\d}{\d u}\right) \left(\varsigma_{03} (w_{12}) +  \varsigma_{13}(w_{12}) + \varsigma_{23}(w_{12})\right)
\end{equation*}
By definition,
\begin{equation*}
\varsigma_{03} (w_{12}) +  \varsigma_{13}(w_{12}) + \varsigma_{23}(w_{12})
= - (u w_{3*}) \circ_{*} w_{12} + w_{2*} \circ_{*} (u w_{13}) +  w_{1*} \circ_{*} (u w_{23}). 
\end{equation*}
Using that
\begin{align*}
\Phi^{ad}\left(\frac{\d}{\d u}\right) 
((u w_{3*}) \circ_{*} w_{12})
 = & 
\phi_1 \circ( w_{3*} \circ_{*} w_{12}) 
- (w_{3*}\circ_3 \phi_1) \circ_* w_{12} - w_{3*}\circ_{*}\phi_1\circ_* w_{12} \\ \notag
\Phi^{ad}\left(\frac{\d}{\d u}\right) 
(w_{2*} \circ_{*} (u w_{13}))
=&
w_{2*} \circ_{*} \phi_1\circ_{*} w_{13} - w_{2*}\circ_{*} w_{13}\circ_1 \phi_1 - w_{2*}\circ_{*} w_{13}\circ_3 \phi_1 \\ \notag
\Phi^{ad}\left(\frac{\d}{\d u}\right) 
(w_{1*} \circ_{*} (u w_{23}))
=&
w_{1*} \circ_{*} \phi_1\circ_{*} w_{23} - w_{1*}\circ_{*} w_{23}\circ_2 \phi_1 - w_{1*}\circ_{*} w_{23}\circ_3 \phi_1 
\end{align*}
it is then straightforward to compute the final expression for $\theta_3$
that appears to be a summation of $7$ terms and coincides with the formula for $\theta_3$ described in Example~\ref{ex::theta_23}.

The fact that we finally obtain the same formula for all $n\geq 0$ as in Section~\ref{sec:quasi} is based on Lemma~\ref{lem::TRR}
and in particular on the topological recursion relations considered in Theorem~\ref{thm::psi-classes}. An easier proof is given in
the next Section using a uniqueness argument. 

\subsection{Uniqueness}
\label{sec::uniqueness}
In order to get the coincidence of morphisms $\theta$ 
(first defined via summation of Givental graphs in Section~\ref{sec:quasi} 
and second via diagram chase in formula~\eqref{eq::Hycom_to_BV_with_u})
we just explain in the lemma below that there is no big freedom in the possible morphisms from $\Hycomm$ to $\BV/\Delta$.
\begin{proposition}
\label{thm::unique}
 Any graded automorphism of the operad $\Hycomm$ is defined by arbitrary dilations of $\mm_2$ and $\mm_3$.
I.~e. for a given pair $\lambda_2,\lambda_3$ there exist a unique automorphism of $\Hycomm$ given by formulas
$\mm_n \mapsto \lambda_2 \lambda_3^{n-2} \mm_n$ with $n\geq 2$;
moreover, any automorphism belongs to this system.
\end{proposition}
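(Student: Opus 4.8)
The plan is to reduce the statement to a diagonal action on generators and then to pin down the admissible scalars. Recall that $\Hycomm$ is generated by the operations $\mm_n$, $n\ge 2$, subject only to the quadratic relations~\eqref{hycom_rel}, and that $\mm_n$ has extremal degree $4-2n$ in arity $n$. A weight count shows this degree singles out the generator: a tree monomial with $k$ vertices of arities $a_1,\dots,a_k$ has arity $n=\sum_i(a_i-1)+1$ and degree $2+2k-2n$, so the minimal degree $4-2n$ is attained only for $k=1$, and the corresponding graded piece $\Hycomm(n)_{4-2n}$ is the one-dimensional top homology $H_{2(n-2)}(\oM_{0,n+1})=\kk\cdot\mm_n$. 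Consequently any graded automorphism $F$ of $\Hycomm$, preserving arity and degree, must send $\mm_n$ into this line, i.e. $F(\mm_n)=\lambda_n\mm_n$ with $\lambda_n\in\kk^{*}$ (there are no decomposables of the same degree to contribute lower-order terms). Thus $F$ is automatically diagonal on generators and is completely determined by the sequence $(\lambda_n)_{n\ge2}$.

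For sufficiency I would use the generating-function reformulation recalled in Section~\ref{sec::open_close}: the product $m_x(a,b)=\sum_{k\ge0}\frac1{k!}\mm_{k+2}(a,b,x,\dots,x)$ is associative for every $x$ exactly when the relations~\eqref{hycom_rel} hold. For the scaled generators $\tilde\mm_n=\lambda_2\lambda_3^{\,n-2}\mm_n$ multilinearity gives $\tilde m_x(a,b)=\sum_k\frac{\lambda_2\lambda_3^{\,k}}{k!}\mm_{k+2}(a,b,x,\dots,x)=\lambda_2\,m_{\lambda_3 x}(a,b)$, which is associative (apply associativity of $m_y$ at $y=\lambda_3 x$ and rescale by the constant $\lambda_2$). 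Hence the scaled generators still satisfy all hypercommutative relations, so $F$ extends to a well-defined operad endomorphism, invertible because every $\lambda_n\ne0$; this produces the claimed $(\kk^{*})^2$-family, and uniqueness for a fixed pair $(\lambda_2,\lambda_3)$ is immediate since $F$ is fixed on generators.

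For necessity, observe that a diagonal $F$ descends to $\Hycomm$ precisely when it preserves the quadratic relation space $R\subset\mathcal F^{(2)}$ of the free operad. On a weight-two tree monomial with inner and outer arities $(p,q)$ the map $F$ acts by the eigenvalue $\lambda_p\lambda_q$. Writing a relation $r\in R$ coming from~\eqref{hycom_rel} in arity $n+3$ (so $p+q=n+4$) as a sum $r=\sum_{(p,q)}r_{(p,q)}$ of eigencomponents, both $r$ and $F(r)$ lie in $R$, so for any scalar $\mu$ one has $\sum_{(p,q)}(\lambda_p\lambda_q-\mu)\,r_{(p,q)}\in R$. The crucial point is that the individual eigencomponents $r_{(p,q)}$ are \emph{not} relations: inspecting arity five, the relation genuinely mixes the eigenvalue $\lambda_2\lambda_4$ (the terms with arities $(2,4)$ and $(4,2)$) with the eigenvalue $\lambda_3^2$ (the two $(3,3)$-terms), and neither eigencomponent lies in $R$ — equivalently, by the Koszul duality with $\Grav$ from Section~\ref{sec::open:moduli}, neither is annihilated by the gravity relations. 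Taking $\mu=\lambda_3^2$ then forces $(\lambda_2\lambda_4-\lambda_3^2)\,r_{(2,4)}\in R$, hence $\lambda_2\lambda_4=\lambda_3^2$; the identical inspection in arity $n+3$ yields $\lambda_p\lambda_q=\lambda_{p+1}\lambda_{q-1}$ for all admissible $p,q$.

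Finally, the system $\lambda_p\lambda_q=\lambda_{p+1}\lambda_{q-1}$ is equivalent to $\lambda_{k+1}/\lambda_k$ being independent of $k$; naming this common ratio $\lambda_3$ and the initial value $\lambda_2$ gives $\lambda_n=\lambda_2\lambda_3^{\,n-2}$, which is the formula in the statement (note that the second parameter is the common ratio of consecutive dilations, so the actual dilation of $\mm_3$ is $\lambda_2\lambda_3$). I expect the main obstacle to be exactly the computation flagged in the third paragraph — verifying that the arity-pair-homogeneous components of the hypercommutative relation fail to be relations individually — which I would settle either by the explicit arity-five calculation or by pairing against the gravity relations; everything else in the argument is formal.
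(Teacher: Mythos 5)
Your proof is correct and follows essentially the same route as the paper: the one‑dimensionality of the extremal degree $4-2n$ piece of $\Hycomm(n)$ forces any graded automorphism to be diagonal, $\mm_n\mapsto\lambda_n\mm_n$, and the quadratic relations~\eqref{hycom_rel} force $\lambda_p\lambda_q$ to depend only on $p+q$, whence $\lambda_n=\lambda_2\lambda_3^{n-2}$. The only difference is that you make explicit (and honestly flag) the point both arguments rely on — that the arity‑pair eigencomponents of the relation are not individually relations — which the paper's proof asserts without comment; your proposed verification via the arity‑five computation or the Koszul pairing with $\Grav$ is the right way to close it.
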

\begin{proof}
 Indeed, note that for all $n\geq 2$ the subspace of $\Hycomm(n)$ of homological degree $4-2n$ is onedimensional and is generated by 
the generator of $\Hycomm$ operad denoted earlier by $\mm_n$.
Therefore any graded automorphism of $\Hycomm$ should be of the form $\mm_n\mapsto \lambda_n \mm_n$.
The quadratic equations $\sum_{i+j}\mm_i\circ\mm_j =0$ in the operad $\Hycomm$ 
implies that the product $\lambda_i\lambda_j$ should depend only on the sum $i+j$.
By induction this follows that $\lambda_n = {\lambda_2}{\lambda_3}^{n-2}$.
\end{proof}
\begin{corollary}
The morphism $\theta:\Hycomm\rightarrow \BV/\Delta$ given by Formula~\eqref{eq::Hycom_to_BV_with_u} via summation over binary trees
coincides with the morphism $\theta$ described in Section~\ref{sec:quasi} via summation of Givental graphs.
\end{corollary}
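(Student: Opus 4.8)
The plan is to leverage the rigidity of $\Hycomm$ expressed by Proposition~\ref{thm::unique}. Write $\theta^{\mathrm{ch}}$ for the morphism produced by the diagram chase, defined on generators by Formula~\eqref{eq::Hycom_to_BV_with_u}, and $\theta^{\mathrm{G}}$ for the morphism assembled out of Givental graphs in Section~\ref{sec:quasi}. Both are morphisms of dg-operads $\Hycomm\to\BV/\Delta$ and both are quasi-isomorphisms: this is Theorem~\ref{thm:theta} for $\theta^{\mathrm{ch}}$ and Theorem~\ref{thm::formula_BV-Hycomm} for $\theta^{\mathrm{G}}$. The structural fact I would record first is that neither map involves the operation $\Delta$. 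The Givental formula of Section~\ref{sec:quasi} is built solely from the iterated product $m$ and the operators $\phi_i$, while the representing cocycles $\Phi^{ad}\left(\frac{\d}{\d u}\right)\nu_n$ contain no $\Delta$ either, as was already used in the proof of Theorem~\ref{thm:theta}. Hence both maps have image in the suboperad $\Q\subset\BV/\Delta$ --- the intersection of the kernel of $\Delta\frac{\d}{\d\phi}$ with the suboperad generated by the multiplication and the $\phi_i$ --- which by Theorem~\ref{thm:theta} is precisely $H^{\udot}(\BV/\Delta)$ and is isomorphic to $\Hycomm$ as an operad.

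With this in place, each of $\theta^{\mathrm{ch}}$ and $\theta^{\mathrm{G}}$ is an isomorphism of operads from $\Hycomm$ onto the \emph{same} suboperad $\Q$. Therefore the composite $\alpha:=\left(\theta^{\mathrm{ch}}\right)^{-1}\circ\theta^{\mathrm{G}}$ is a well-defined automorphism of the operad $\Hycomm$, and it is graded since both constituents preserve the homological degree. Proposition~\ref{thm::unique} then forces $\alpha$ to have the form $\mm_n\mapsto\lambda_2\lambda_3^{\,n-2}\mm_n$ for two scalars $\lambda_2,\lambda_3$.

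It remains only to determine $\lambda_2$ and $\lambda_3$, and this reduces to matching the two constructions in arities $2$ and $3$. This is exactly the computation carried out in Section~\ref{sec::computation:2:3}: one checks there that $\theta^{\mathrm{ch}}_2=m_2=\theta^{\mathrm{G}}_2$ and that $\theta^{\mathrm{ch}}_3$ equals the seven-term expression of Example~\ref{ex::theta_23}, which is by definition $\theta^{\mathrm{G}}_3$. Consequently $\lambda_2=\lambda_3=1$, so $\alpha=\mathrm{id}_{\Hycomm}$, the two morphisms agree on every generator $\mm_n$, and --- being morphisms of operads --- they coincide.

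The single point I would treat with care is the assertion that $\alpha$ is a genuine automorphism of the operad $\Hycomm$ rather than merely an isomorphism on cohomology. This hinges on both maps having image in the one suboperad $\Q$ and on $\Q$ being carried isomorphically onto $H^{\udot}(\BV/\Delta)$; granting the $\Delta$-free form of both formulas together with Theorem~\ref{thm:theta}, the identification of the two images is immediate, after which the uniqueness proposition supplies the conclusion with no further computation.
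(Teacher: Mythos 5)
Your proof is correct and follows essentially the same route as the paper: both arguments identify the common image of the two maps as the suboperad $\Q\cong\Hycomm$, invoke Proposition~\ref{thm::unique} to reduce the comparison to arities $2$ and $3$, and conclude from the explicit check in Section~\ref{sec::computation:2:3}. Your version merely spells out more carefully why both images lie in $\Q$ and phrases the conclusion via the automorphism $\alpha=(\theta^{\mathrm{ch}})^{-1}\circ\theta^{\mathrm{G}}$, which is a harmless reformulation of the same idea.
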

\begin{proof}
In the proof of  Theorem~\ref{thm:theta} we explained that the suboperad $\Q\subset\BV/\Delta$ 
that is the intersection of the kernel of the differential $\Delta\frac{\d}{\d\phi}$ 
and the suboperad generated by multiplication and $\phi_i$'s is isomorphic to $\Hycomm$.
Two maps $\theta$ that we have constructed defines two particular (iso)morphisms from $\Hycomm$ to $\Q$.
We checked that this two morphisms coincide for $\mm_2$ and $\mm_3$. 
Therefore, our uniqueness Proposition~\ref{thm::unique} implies that they are the same for all $\mm_k$.
\end{proof}
\begin{remark}
 It is possible to show the coincidence of two formulas for $\theta$ without using uniqueness arguments.
The proof we know is technical and is based on the generalization of Lemma~\ref{lem::TRR}.
\end{remark}

\section{Givental theory}\label{sec:givental}

In this section prove Theorem~\ref{thm::formula_BV-Hycomm} using the Givental theory of a loop group action on the morphisms from $\Hycomm$ to an arbitrary operad. In fact, the action of the loop group on the $\Hycomm$-algebras has also a homological explanation. It comes from the action on trivializations of $\BV$-operator, and we explain this at the end of this section.

\subsection{Lie algebra action on morphisms of $\Hycomm$}

Consider an arbitrary operad $\P$. We consider morphisms of operads $\Hycomm\to \P$. We are going to introduce an infinitesimal action of the Lie algebra $\mathfrak{g}:=\P(1)\otimes \C[[z]]$ on space of morphisms, where $z$ is a formal variable and $\P(1)$ is considered as a Lie algebra with respect to the commutator $[x,y]=xy-yx$, $x,y\in \P(1)$.

In order to fix a morphism of $\Hycomm$ to $\P$, we consider a system of cohomology classes $\alpha_n\in H^{\udot}(\oM_{0,n+1},\C)\otimes \P(n)$. These classes must satisfy the following condition:
\begin{itemize}
\item For any map $\rho \colon \oM_{0,n_1+1}\times \oM_{0,n_2+1} \to \oM_{0,n+1}$, $n_1+n_2=n-1$, that realizes a boundary divisor in $\oM_{0,n+1}$ and induces the operadic composition $\circ_i\colon \Hycomm(n_1)\otimes \Hycomm(n_2)\to \Hycomm(n)$, we have:
\begin{equation}\label{eq:factorization}
\rho^*\alpha_n = \alpha_{n_1} \circledast_i \alpha_{n_2},
\end{equation}
where by $\circledast_i$ we denote the simultaneous product of cohomology classes and the $\circ_i$-composition in $\P$.
\end{itemize} 

The infinitesimal action of the Lie algebra $\mathfrak{g}$ is given by the explicit formulas. Consider an element $r_\ell z^\ell\in\mathfrak{g}$ for some $\ell\geq 0$. We have:
\begin{align} \label{eq:r-action}
r_\ell z^\ell. \alpha_n := 
& r_\ell \circ_1 \psi_0^\ell \alpha_n   + (-1)^{\ell+1} \sum_{m=1}^n \psi_m^\ell \alpha_n \circ_m r_\ell \\ \notag
& +\sum_{I\sqcup J=[n]} \sum_{i+j=\ell-1} (-1)^{i+1} \rho_* \left(\psi_1^i\alpha_{|I|+1} \circ_1 r_\ell \circledast_1 \psi_0^j\alpha_{|J|}\right)
\end{align}
Here in all cases $\circ_m$ denotes the operation in $\P$;
 $\psi_m$ denotes the $\psi$-class in the corresponding moduli space 
($\oM_{0,n+1}$ in the second summand or $\oM_{0,|I|+2}$ and $\oM_{0,|J|+1}$ in the third summand), 
that is, the first Chern class of the line bundle with the fiber $T^*_{x_m} C$ over the curve $(C,x_0,x_1,\dots,x_k)\in \oM_{0,k+1}$ ($k$ is then equal to $n$, $|I|$, and $|J|+1$ respectively). Moreover, we always assume that the ``output'' marked point is $x_0$, and, in the third summand, we assume that the map $\rho$ attaches the output point of $\oM_{0,|J|+1}$ to the first input (that is, the point $x_1$) of $\oM_{0,|I|+2}$.

\begin{example} In the case $\ell=0$ we simply have $r_0z^0.\alpha_n=[r_0,\alpha_n]$ in the sense of commutation of operadic compositions in $\P$.
\end{example}

The formula for the $\mathfrak{g}$-action is a generalization of the formulas considered in~\cite{Giv3,Lee1,Sha,Tel}, and we refer the reader to these papers for a more detailed introduction to the Givental theory.

\begin{lemma}\label{lem:infinitesimal} For any $r=\sum_{\ell=0}^\infty r_\ell z^\ell\in\mathfrak{g}$ 
and any system of classes $\alpha_n\in H^{\udot}(\oM_{0,n+1},\C)\otimes \P(n)$, $n\geq 2$, 
that satisfies the factorization condition~\eqref{eq:factorization}, 
the classes $\alpha_n+\epsilon \cdot r.\alpha_n\in H^{\udot}(\oM_{0,n+1},\C)\otimes \P(n)$ also satisfy the factorization condition~\eqref{eq:factorization} in the first order in $\epsilon$.
\end{lemma}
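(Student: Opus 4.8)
The plan is to check the first-order compatibility directly, reducing it to a local statement at a single boundary divisor and isolating the one genuinely nontrivial contribution, the self-intersection of the boundary. By linearity in $r$ it suffices to treat a single homogeneous element $r_\ell z^\ell$, and since \eqref{eq:factorization} is tested one gluing morphism at a time, I fix $\rho=\rho_i\colon\oM_{0,n_1+1}\times\oM_{0,n_2+1}\to\oM_{0,n+1}$. Writing $\tilde\alpha_k=\alpha_k+\epsilon\,r_\ell z^\ell.\alpha_k$ and using $\rho^*\alpha_n=\alpha_{n_1}\circledast_i\alpha_{n_2}$ in order zero, the first-order part of the desired identity $\rho^*\tilde\alpha_n=\tilde\alpha_{n_1}\circledast_i\tilde\alpha_{n_2}$ is precisely the Leibniz rule
\[
\rho^*\bigl(r_\ell z^\ell.\alpha_n\bigr)=\bigl(r_\ell z^\ell.\alpha_{n_1}\bigr)\circledast_i\alpha_{n_2}+\alpha_{n_1}\circledast_i\bigl(r_\ell z^\ell.\alpha_{n_2}\bigr).
\]
So the entire content is that $\alpha\mapsto r_\ell z^\ell.\alpha$ is a derivation for the boundary-restriction maps.

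Next I would substitute the three summands of \eqref{eq:r-action} --- the \emph{output} term, the \emph{input} terms, and the \emph{boundary} (pushforward) term --- into the left-hand side, using the standard behaviour of tautological classes under $\rho$: for a marked point away from the node one has the clean pullback $\rho^*\psi_p=\psi_p$ on the appropriate factor (and $\rho^*\psi_0=\psi_0$ on the first factor), and a boundary divisor $D'$ pulls back according to its intersection with the image $D_i$ of $\rho$. The output term and the non-node input terms then pull back termwise and assemble, respectively, into the output term of $(r_\ell z^\ell.\alpha_{n_1})\circledast_i\alpha_{n_2}$ and into the surviving-input terms on the right-hand side. In the boundary term, the components where the summation divisor $D'$ meets $D_i$ transversally (in a codimension-two stratum) reproduce the boundary terms of $r_\ell z^\ell.\alpha_{n_1}$ and of $r_\ell z^\ell.\alpha_{n_2}$, while the components with $D'$ disjoint from $D_i$ vanish. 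This is pure bookkeeping: one matches the index set $I\sqcup J=[n]$ against the partition induced by $\rho_i$ and tracks the Koszul signs.

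The only real point is the self-intersection contribution: the component of $\rho^*$ applied to the boundary term in which the summation divisor $D'$ coincides with $D_i$. Here one invokes the excess-intersection formula $\rho^*\rho_*=(\,\cdot\,)\,c_1(N_{D_i})$ with $c_1(N_{D_i})=-(\psi'+\psi'')$, where $\psi',\psi''$ are the $\psi$-classes on the two branches of the node (the $i$-th input of the first factor and the output of the second). Multiplying $\sum_{a+b=\ell-1}(-1)^{a+1}(\psi')^a(\psi'')^b$ by $-(\psi'+\psi'')$ telescopes: all mixed monomials cancel in pairs and one is left with exactly $(-1)^{\ell+1}(\psi')^\ell+(\psi'')^\ell$. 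The first summand is the node-input term of $(r_\ell z^\ell.\alpha_{n_1})\circledast_i\alpha_{n_2}$, namely $r_\ell$ inserted at the $i$-th (node) input with weight $\psi_i^\ell$ and sign $(-1)^{\ell+1}$, and the second is the output term of $\alpha_{n_1}\circledast_i(r_\ell z^\ell.\alpha_{n_2})$, namely $r_\ell$ at the node-output with weight $\psi_0^\ell$ and sign $+1$. Thus the self-intersection supplies exactly the two node contributions that the termwise pullback of the output and input terms omits.

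I expect this telescoping at the node to be the main obstacle, as it is where the particular $\psi$-exponents and signs of \eqref{eq:r-action} are forced and where the normal-bundle geometry enters; the matching of everything else is routine. Summing the three groups of contributions then reproduces the right-hand side of the Leibniz identity verbatim, which establishes the lemma. The remaining labour --- the precise sign conventions from the Koszul rule and the $S_n$-equivariance of the partitions --- I would confine to a direct, if tedious, verification.
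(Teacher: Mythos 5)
Your proposal is correct and is precisely the argument the paper delegates to the literature: the paper's entire proof is the citation "straightforward generalization of Proposition 6.9 in [Tel]," and what you have written out — reduction to the Leibniz rule for $\rho^*$, termwise matching of the output and non-node input terms, and the excess-intersection identity $\rho^*\rho_*=(\,\cdot\,)\cdot(-(\psi'+\psi''))$ telescoping against $\sum_{a+b=\ell-1}(-1)^{a+1}(\psi')^a(\psi'')^b$ to produce exactly the two missing node contributions $(-1)^{\ell+1}(\psi')^\ell$ and $(\psi'')^\ell$ — is exactly the content of that proposition. In particular your sign bookkeeping at the node checks out, so this is the same approach, merely made explicit.
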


\begin{proof} It is a straightforward generalization of Proposition 6.9 in~\cite{Tel}.
\end{proof}

It follows from Lemma~\ref{lem:infinitesimal} that for any morphism $g\colon\Hycomm\to \P$ and an arbitrary sequence of elements $r_\ell\in \P(1)$, $\ell=1,2,\dots$, we obtain a new morphism $\exp(r.)g\colon \Hycomm\to \P$, $r=\sum_{\ell=1}^\infty r_\ell z^\ell$, by exponentiation of the infinitesimal Lie algebra action defined above. This means that we define an action of the Lie group
$G=\{M(z) \in O(1)\otimes \C[[z]], M(0)=1\}$ on the space of morphisms $\Hycomm\to \P$.

\subsection{Application to the $\BV$-operad}

We consider the morphism $\theta_0\colon \Hycomm\to \BV$ 
that sends the generator $\mm_k$ to the iterated multiplication $m()$, $k\geq 2$. 
In terms of the infinitesimal Givental action the condition that $\Delta$ is the second order operator 
with respect to the multiplication can be written as 
\begin{equation}\label{eq:BV}
(\Delta z^1).\theta_0 = 0
\end{equation}
(it is proved in a bit different terms in~\cite[Proposition 1]{Sha}).

The same map $\theta_0$ can be also considered as a map to $\BV/\Delta$. In this case, in addition to Equation~\eqref{eq:BV} we also have 
\begin{equation}\label{eq:d}
\left(\Delta\frac{\d}{\d \phi} z^0\right).\theta_0 = 0.
\end{equation}
(abusing a little bit the notation we think of $\Delta \frac{\d }{\d \phi}$ as an element of $\BV/\Delta$ 
such that the differential is given by the commutator with this element).

Consider the map $\theta\colon \Hycomm\to \BV/\Delta$ 
defined by $\exp(\phi(z).)\theta_0$. 
There are several observations. 
First of all, just by construction, $\theta$ is a morphism of operads. Second, we want to show that $\theta$ is a morphism of dg-operads, that is, 
$(\Delta\frac{\d}{\d\phi} z^0).\theta = 0$. This follows from the following computation:
\begin{equation*}
\left(\Delta\frac{\d}{\d\phi} z^0\right).\theta = \left(\Delta\frac{\d}{\d\phi}z^0\right).\exp(\phi(z).)\theta_0 
= \exp(\phi(z).)\left(\Delta\frac{\d}{\d\phi}z^0+\Delta z^1\right). \theta_0 = 0.
\end{equation*}
Here the first equality is the definition of $\theta$, the second one is a consequence of Equation~\eqref{eq:d-delta},
 and the third equality follows from Equations~\eqref{eq:BV} and~\eqref{eq:d}.

Thus we see that  $\theta(\Hycomm)\subset \Q\subset \BV/\Delta$, 
where $\Q$ is the suboperad considered in the proof of Theorem~\ref{thm:theta}, that is, $\Q$ is the intersection of the kernel of $\Delta\frac{\d}{\d \phi}$ with the suboperad generated by the multiplication and $\phi_i$'s, $i=1,2,\ldots$. 

In the proof of Theorem~\ref{thm:theta} we observed that $\Q$ is isomorphic to $\Hycomm$. Moreover, a simple degree count shows that the map $\theta\colon \mm_k\mapsto \theta_k$, $k=2,3,\dots$, preserves the degrees. Therefore, $\theta$ maps generators to generators, and it is an isomorphism between $\Hycomm$ and $\Q$.

The last observation is that $\theta$ is exactly the map constructed in Section~\ref{sec:quasi} in terms of graphs. 
This can be observed by an explicit exponentiation of the formula~\eqref{eq:r-action}, 
and, for example, it is also explained in~\cite[Section 6.14]{Tel} and~\cite{DunShaSpi}. 
This completes the proof of Theorem~\ref{thm::formula_BV-Hycomm}.

\subsection{Homological origin of the Givental action}

In this section we explain how the Givental group action emerges naturally via the loop
group action on trivializations of $\Delta$.

Consider a finite-dimensional $\Hycomm$-algebra $V$ with zero differential. Let $\bar V$ be the corresponding differential 
graded $\BV$-algebra with the differential $d$, and we denote by $\phi_i$ the corresponding additional operators coming from structure of $\BV/\Delta$
on $\bar V$.

Consider an arbitrary sequence of endomorphisms $\alpha_i\in \End(V)$.
Since the cohomology of $\bar V$ coincides with $V$, we can define a sequence of 
endomorphisms $\bar{\alpha}_i\in\End(\bar V)$ such that they commute with the differential on $\bar V$ 
and their restrictions to the cohomology coincide with $\alpha_i$, $i=1,2,\ldots$.

We have:
\begin{equation*}
\exp\left(-\sum_{i=1}^\infty\bar{\alpha}_i z^{i}\right) d \exp\left(\sum_{i=1}^\infty\bar{\alpha}_i z^i\right) = d 
\end{equation*}
Therefore, 
\begin{equation*}
\exp(-\phi(z))\exp\left(-\sum_{i=1}^\infty\bar{\alpha}_i z^{i}\right) d \exp\left(\sum_{i=1}^\infty\bar{\alpha}_i z^i\right)\exp(\phi(z))=
d+z\Delta
\end{equation*}

Thus we see that the sequence of operators $\phi'_i$ given by the formula
\begin{equation*}
\phi'(z)=\sum\phi'_i z^{i} \colon = \ln(\exp(\bar{\alpha}(z)\exp(\phi(z)))
\end{equation*}
defines a new $\BV/\Delta$-algebra structure on $(\varsigma(V),d)$. This structure induces a new $\Hycomm$-algebra structure on $V=H^\bullet(\bar V, d)$. 

\begin{theorem}
The new $\Hycomm$-algebra structure on $V$ coincides with the one obtained by the Givental group action of the element $\exp\left(\sum_{i=1}^\infty\alpha_iz^i\right)$ applied to the original $\Hycomm$-algebra.
\end{theorem}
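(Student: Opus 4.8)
The plan is to realize both $\Hycomm$-structures on $V$ as the cohomology of chain-level structures on $\bar V$ coming from the Givental action of the trivialization data, and then to reduce the statement to the group law of the Givental action together with its compatibility with passage to cohomology.

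First I would reinterpret the construction. Evaluating the abstract morphism $\theta=\exp(\phi(z).)\theta_0\colon\Hycomm\to\BV/\Delta$ on the algebra $\bar V$ amounts to composing it with the structure map $\Theta\colon\BV/\Delta\to\End(\bar V)$ (sending the $\BV$-generators to the product and to $\Delta$, and each $\phi_i$ to the corresponding operator on $\bar V$). Since formula~\eqref{eq:r-action} is written purely in terms of operadic compositions, universal $\psi$-classes and the pushforwards $\rho_*$, it is natural with respect to any morphism of operads: for $F\colon\P\to\P'$ and $g\colon\Hycomm\to\P$ one has $F\circ(r.g)=(F(r)).(F\circ g)$. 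Applying this to $F=\Theta$ shows that the chain-level $\Hycomm$-structure on $\bar V$ determined by $\phi$ is exactly $\exp(\phi(z).)\Theta_0$, where $\Theta_0\colon\mm_k\mapsto$ iterated product on $\bar V$ and the action is now taken in $\End(\bar V)$ with $\phi(z)=\sum\phi_iz^i\in\End(\bar V)\otimes\C[[z]]$. The same computation with $\phi'$ gives the chain-level structure $\exp(\phi'(z).)\Theta_0$. By construction of $\bar V$, the cohomology of the former is the original structure $g$, while the cohomology of the latter is the new structure $g'$.

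Next I would use the group law. By definition $\exp(\phi'(z))=\exp(\bar\alpha(z))\exp(\phi(z))$ as operators on $\bar V[[z]]$, so in the Givental group of $\End(\bar V)$ the element $\exp(\phi')$ is the product $\exp(\bar\alpha)\exp(\phi)$. Since the loop-group action is a genuine group action integrating the infinitesimal action of Lemma~\ref{lem:infinitesimal} (see~\cite{Tel}), this factorization gives $\exp(\phi'(z).)\Theta_0=\exp(\bar\alpha(z).)\bigl(\exp(\phi(z).)\Theta_0\bigr)$; that is, the chain-level structure attached to $\phi'$ is the $\bar\alpha$-translate of the one attached to $\phi$. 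Here it is essential that each $\bar\alpha_i$ commutes with $d$, so that $\exp(\bar\alpha(z))$ is an automorphism of $(\bar V[[z]],d)$ and the translated structure is again given by chain maps.

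The final and main step is descent to cohomology. Because $\theta$ is a morphism of dg-operads and $\Hycomm$ carries the zero differential, every operation $\Theta(\theta_n)$ is a chain map on $\bar V$; hence $\bar V$ is a strict $\Hycomm$-algebra and its cohomology $V$ inherits a strict structure simply by passing each operation to cohomology, with no homotopy transfer involved. I would then show that this passage intertwines the Givental action by $\bar\alpha(z)$ with the Givental action by $\alpha(z)$, $\alpha_i=\bar\alpha_i|_{V}$. This is where I expect the real work: one runs through the three summands of~\eqref{eq:r-action} and checks that taking cohomology commutes with each ingredient. The $\psi$-classes and the pushforwards $\rho_*$ act only on the $H^{\udot}(\oM_{0,n+1})$-tensor factor and are therefore untouched; each operadic composition $\circ_m$ involves only chain maps (the $\Hycomm$-operations are cocycles, and $\bar\alpha_i$ is a chain map by hypothesis), so composing and then taking cohomology agrees with taking cohomology and then composing; and $\bar\alpha_i$ induces precisely $\alpha_i$ on $V$. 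Granting this compatibility I obtain
\begin{equation*}
g'=H^{\udot}\bigl(\exp(\bar\alpha(z).)\exp(\phi(z).)\Theta_0\bigr)=\exp(\alpha(z).)\,H^{\udot}\bigl(\exp(\phi(z).)\Theta_0\bigr)=\exp(\alpha(z).)\,g,
\end{equation*}
which is exactly the assertion. The delicate point to treat carefully is the term-by-term verification in the pushforward summand, where one must confirm that the $H^{\udot}(\oM)$-operations genuinely decouple from the $\End(\bar V)$-compositions so that the cohomology functor may be applied factorwise.
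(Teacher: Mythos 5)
Your argument is correct in outline but follows a genuinely different route from the paper's. The paper works infinitesimally: it sets $\bar\alpha(z)=r_\ell z^\ell$, observes that to first order the trivialization deforms as $\exp(\phi)\mapsto(1+r_\ell z^\ell)\exp(\phi)$, and then checks directly that the resulting first-order deformation of the explicit graph formulas for $\theta_k$ from Section~\ref{sec:quasi} (the root, leaf, and edge decorations $\exp(\phi(\psi))$, $\exp(-\phi(-\psi_e))$, $\mathcal{E}$) reproduces the three summands of Equation~\eqref{eq:r-action} term by term; since both one-parameter families are flows of the same vector field through the same point, they coincide. You instead invoke the group law: $\exp(\phi')=\exp(\bar\alpha)\exp(\phi)$ implies $\exp(\phi'(z).)\Theta_0=\exp(\bar\alpha(z).)\exp(\phi(z).)\Theta_0$, and you then descend to cohomology via naturality of \eqref{eq:r-action} under the operad morphism from the suboperad of chain maps in $\End(\bar V)$ to $\End(V)$. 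What your approach buys is a structural explanation with no formula-matching; what it costs is a reliance on the assertion that exponentiating the infinitesimal action of Lemma~\ref{lem:infinitesimal} yields an honest action of the group $G$ (i.e.\ that \eqref{eq:r-action} is a Lie algebra homomorphism into derivations of the space of morphisms, so that products of group elements act by composition). The paper asserts this when it introduces the action but never proves it, and its own infinitesimal proof of the present theorem deliberately avoids needing it; if you want your argument to be self-contained you should either verify that homomorphism property or, as the paper does, reduce to a first-order statement. Your final descent step is sound as you describe it, and is in fact also implicitly required (and left unstated) in the paper's ``tautological observation.''
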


\begin{proof} It is easier to compare the infinitesimal deformations. Indeed, assume that $\sum_{i=1}^\infty\bar{\alpha}_i z^i= r_\ell z^\ell$ and we consider the first order deformation in $r_\ell$. In this case $\phi'(z)=\phi(z)+ r_\ell z^\ell$. Then it is just a tautological observation to see that the corresponding deformation of the formulas for $\theta_k$ in Section~\ref{sec:quasi}, $k\geq 2$,  is given by Equation~\eqref{eq:r-action}.
\end{proof}

\end{document}